\DeclareFontFamily{OT1}{pzc}{}
\DeclareFontShape{OT1}{pzc}{m}{it}{<-> s * [1.10] pzcmi7t}{}
\DeclareMathAlphabet{\mathpzc}{OT1}{pzc}{m}{it}
\definecolor{Red}{cmyk}{0,1,1,0.2}
\newtheorem{theorem}{Theorem}[section]
\newtheorem{lemma}[theorem]{Lemma}
\newtheorem{pro}[theorem]{Proposition}
\newtheorem{definition}[theorem]{Definition}
\theoremstyle{remark}
\newtheorem*{remark}{Remark}
\newcommand{\ep}{\varepsilon}
\newcommand{\ind}{\mathds{1}}
\renewcommand{\d}{\textrm{\,d}}
\newcommand{\R}{\mathbb{R}}
\DeclareMathOperator{\sign}{sign}
\DeclareMathOperator{\ess}{ess}
\DeclareMathOperator{\Hess}{\bold{Hess}}
\begin{document}

\title{ Conservation law and Hamilton-Jacobi equations on a junction: the convex case}

\author{\renewcommand{\thefootnote}{\arabic{footnote}}
  P. Cardaliaguet\footnotemark[1], N. Forcadel\footnotemark[2], T. Girard\footnotemark[3], R. Monneau\footnotemark[1] ~\footnotemark[4]}
\footnotetext[1]{CEREMADE, UMR CNRS 7534, Universit\'e Paris Dauphine-PSL,
Place de Lattre de Tassigny, 75775 Paris Cedex 16, France. }
\footnotetext[2]{INSA Rouen Normandie, Normandie Univ, LMI UR 3226, F-76000 Rouen, France.}
\footnotetext[3]{Institut Denis Poisson, Universit\'e de Tours, Parc Grandmont, 37200 Tours, France.}
\footnotetext[4]{CERMICS, Universit\'e Paris-Est, Ecole des Ponts ParisTech, 6-8 avenue Blaise Pascal, 77455 Marne-la-Vall\'ee Cedex 2, France.}

\maketitle

\begin{abstract}
The goal of this paper is to study the link between the solution to an Hamilton-Jacobi (HJ) equation and the solution to a Scalar Conservation Law (SCL) on a special network. When the equations are posed on the real axis, it is well known that the space derivative of the solution to the Hamilton-Jacobi equation is the solution to the corresponding scalar conservation law. On networks, the situation is more complicated and we show that this result still holds true in the convex case on a 1:1 junction.  The correspondence between solutions to HJ equations and  SCL on a 1:1 junction is done showing the convergence of associated numerical schemes. A second direct proof using semi-algebraic functions is also given.

Here a 1:1 junction is a simple network composed of two edges and one vertex.
In the case of three edges or more, we show that the associated HJ germ is not a $L^1$-dissipative germ, while it is the case for only two edges.

As an important byproduct of our numerical approach, we get a new result on the convergence of numerical schemes for scalar conservation laws on a junction.
For a general desired flux condition which is discretized, we show that the numerical solution with the general flux condition converges to the solution of a SCL problem with an effective flux condition at the junction. Up to our knowledge, in previous works the effective condition was directly implemented in the numerical scheme. In general the effective flux condition differs from the desired one, and is its relaxation, which is very natural from the point of view of Hamilton-Jacobi equations. Here for SCL, this effective flux condition is encoded in a  germ that we characterize at the junction.
\end{abstract}

\paragraph{AMS Classification:} 35L65, 35R02, 35D40, 35F20.
\paragraph{Keywords:} scalar conservation laws, Hamilton-Jacobi equations, networks.


\section{Introduction}
In one space dimension, it's well known that the space derivative of the viscosity solution to a Hamilton-Jacobi (HJ) equation is the solution to a scalar conservation law (SCL). We refer for example to \cite{Caselles92, colombo-Perrollaz-Sylla} for this kind of results. In this paper, we want to investigate this relation in the case of simple junctions composed of two edges and one vertex (referred later as 1:1 junctions), for which, up to our knowledge, this result is completely open.
Scalar Conservation Laws and Hamilton-Jacobi equations on networks have been largely studied in the last decade. Concerning SCL, the 1:1 case has been studied following many different approaches during the last 20 years (see the two surveys \cite{MISHRASurvey} and \cite{MonotoneGraphAndreianov} and references therein for an overview on the subject). In this paper, we choose to focus mostly on the germ approach (see \cite{andreianov,FMR22}) as it is suitable for the correspondence result. Concerning Hamilton-Jacobi equations on networks, the theory has been largely developed since the pionner works of Achdou, Camilli, Cutr\`{i}, Tchou \cite{ACCT} and Imbert, Monneau, Zidani \cite{IMZ}. We refer in particular to \cite{imbert-monneau}, where a general comparison principle has been developed using PDE tools and a classification of the junction condition has been proposed, to \cite{LS1, LS2} for an extension to the non-convex case and to the monograph \cite{BCbook} for a general review on the topic.

Even if the  theories are now well understood both for scalar conservation laws and Hamilton-Jacobi equations, the relation between these two {theories has} never been addressed on junctions until now. In this paper, we will give an answer for 1:1 junctions and we will also show that the situation is much more complicated when the junction is composed of more than three branches.
The main difficulty comes from the junction condition and we will explain how the junction condition of the HJ equation, namely a flux limiter condition as in \cite{imbert-monneau}, can be interpreted as a condition on a germ, as in \cite{andreianov}.

\subsection{The main result}
The aim of this paper is to make the link between viscosity solutions to Hamilton-Jacobi equations posed on the real line with a discontinuity at the origin and entropy solutions to a suitable conservation law. We consider here the case where the fluxes are convex but the result remains valid in the concave case (just changing the solution $u$ by $-u$).
Namely, we start with the flux-limited viscosity solution $u$, as in \cite{imbert-monneau}, of
\begin{equation}
 \label{eq:HJstrong}
 \left\lbrace \begin{matrix}
 u_t + H_L(u_x) = 0 &\textrm{ if } x < 0 \\
 u_t + H_R(u_x) = 0 &\textrm{ if } x > 0 \\
 u_t + {\bar F_{A}}(u_x(t,0^-), u_x(t,0^+)) = 0 &\textrm{ if } x = 0\\
 u(0,x)=u_0(x) &\textrm{ for } x \in \R
\end{matrix}\right.
\end{equation}
where $u_0$ is a Lipschitz continuous initial condition. For $\alpha=L,R,$ let $a_\alpha<b_\alpha<c_\alpha$.  We make the following assumptions on the Hamiltonians for some $\delta>0$
\begin{equation}\label{ass:H}
\left\{ \begin{array}{c}
\text{For $\alpha=L,R$, the Hamiltonian $H_\alpha$ is of class $C^2$, with $H_\alpha ''\ge \delta>0$,} \\
\text{decreasing on $[a_\alpha,b_\alpha]$ and increasing on $[b_\alpha,c_\alpha]$, with $H_\alpha(a_\alpha)=H_\alpha(c_\alpha)=0$.}
\end{array}
\right.
\end{equation}
 We define the two associated monotone envelopes
$$H_\alpha^+(p)=\left\{\begin{array}{ll}
H_\alpha(b_\alpha) & \quad \mbox{for}\quad p\in [a_\alpha,b_\alpha]\\
H_\alpha(p) & \quad \mbox{for}\quad p\in [b_\alpha,c_\alpha]\\
\end{array}\right.,\quad
H_\alpha^-(p)=\left\{\begin{array}{ll}
H_\alpha(p) & \quad \mbox{for}\quad p\in [a_\alpha,b_\alpha]\\
H_\alpha(b_\alpha) & \quad \mbox{for}\quad p\in [b_\alpha,c_\alpha].\\
\end{array}\right.
$$
Concerning the initial data, we make the following assumption
\begin{equation}\label{eq:u0}
u_0 \textrm{ is Lipschitz continuous {on $\R$} and a.e.} \quad (u_0)_x\in [a_L,c_L] \textrm{ if }x<0\textrm{ and } (u_0)_x\in [a_R,c_R] \textrm{ if }x>0.
\end{equation}
We set
$${H_0:=\max_{\alpha=L,R}\min_p H_\alpha(p)}$$
and for $A\in [H_0, 0]$, we define the effective junction condition ${\bar F_{A}}$ by
\begin{equation}\label{eq:F-A}
{\bar F_{A}(p_L,p_R):=\max\{A,H^+_L(p_L), H^-_R(p_R)\}}
\end{equation}

The goal is then to understand the equation satisfied by
$$\rho:= u_x.$$

\paragraph{Heuristics.}
By \cite{Caselles92, colombo-Perrollaz-Sylla}, we first note that $\rho$ is an entropy solution {to}
\begin{equation}\label{eq:SCL2}
\left\lbrace \begin{matrix}
 \rho_t + H_L(\rho)_x = 0 &\textrm{ if } x < 0 \\
 \rho_t + H_R(\rho)_x = 0 &\textrm{ if } x > 0 \\
 \rho(0,x)=\rho_0(x) &\textrm{ for } x \in \R
\end{matrix}\right.
\end{equation}
where $\rho_0=(u_0)_x$ a.e.. The main difficulty is then to understand what is the appropriate junction condition.
For solutions {to} conservation laws with strongly convex fluxes, we recall the existence of strong traces of $\rho$ at $x=0$ {(see (\ref{eq::g8}) and also \cite{panov-trace})}. We denote by $\rho(t,0^-)$ and $\rho(t,0^+)$ these traces respectively on the left and on the right.
In order to fix a condition at $x=0$ for the scalar conservation law, following the works of \cite{andreianov} and \cite{FMR22, MFR22}, we look for stationary solutions to \eqref{eq:SCL2}, that is  solutions of the form
\begin{equation}\label{eq::t1}
\rho(t,x)=\left\{
\begin{array}{ll}
k_L&{\rm if}\; x<0\\
k_R&{\rm if}\; x>0
\end{array}\quad \quad {\mbox{where $(k_L, k_R)\in Q:=[a_L,c_L]\times[a_R,c_R]$.}}
\right.
\end{equation}
Let us note that, if we set
$$u (t,x)=(k_L x-tH_L (k_L))\ind_{\{x<0\}}+(k_R x-tH_R (k_R))\ind_{\{x>0\}},$$ then $\rho= u_x$ and $u$ is solution to the Hamilton-Jacobi equation \eqref{eq:HJstrong} on $(0+\infty)\times \R\backslash\{0\}$. Since we want $u$ to be continuous at $0$, this implies that the $k_\alpha$ have to satisfy the Rankine-Hugoniot condition
$$H_L(k_L)=H_R(k_R).$$
Moreover, $u$ satisfies the junction condition in \eqref{eq:HJstrong} iff
$$H_R(k_R)=H_L(k_L)=\max(A, H_L^+(k_L), H_R^-(k_R)).$$

\bigskip

Following  \cite{andreianov, FMR22, MFR22}, we then define the germ $\mathcal G_{A}$ as
\begin{equation}\label{eq:GAF0}
   \mathcal G_{A}:=\left\{(k_L,k_R)\in Q,\; H_R(k_R)=H_L(k_L)=\max(A, H_L^+(k_L), H_R^-(k_R))\right\},
\end{equation}
where $Q$ is defined in (\ref{eq::t1}).
We will explain in Proposition \ref{pro:propertiesGAF0} that this germ is {maximal, $L^1$-dissipative and complete}. Hence the following scalar conservation law
\begin{equation}
 \label{eq:SCL-strong}
 \left\lbrace \begin{matrix}
 \rho_t + H_L(\rho)_x = 0 &\textrm{ if } x < 0 \\
 \rho_t + H_R(\rho)_x = 0 &\textrm{ if } x > 0 \\
 (\rho(t,0^-), \rho(t,0^+))\in \mathcal G_{A}&\textrm{a.e} \\
 \rho(0,x)=\rho_0(x) &\textrm{ for } x \in \R

\end{matrix}\right.
\end{equation}
is well-posed.
The first main result of this paper is  the following theorem, which makes rigorous the previous computations.
\begin{theorem}[Viscosity versus entropy solutions:  flux limited conditions]\label{th:main}
Let $u_0$ {satisfy} \eqref{eq:u0} and {let us set}  $\rho_0 =  (u_0)_x$.
Let $H_{L,R}$ satisfying \eqref{ass:H}.
Let $u$ be the unique viscosity solution of \eqref{eq:HJstrong} in the sense of Definition \ref{defi:strongHJ} and $\rho$ be the unique {$\mathcal G_A$-entropy} solution of \eqref{eq:SCL-strong} in the sense of Definition \ref{def:SCLSolTrace}.
Then, in the distributional sense, we have
$$ u_x = \rho.$$
\end{theorem}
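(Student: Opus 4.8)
The strategy is to show that $\rho := u_x$ is itself a $\mathcal{G}_A$-entropy solution of \eqref{eq:SCL-strong}; the conclusion then follows immediately from the well-posedness of that problem. Indeed, Proposition \ref{pro:propertiesGAF0} (maximality, $L^1$-dissipativity and completeness of the germ) yields uniqueness of the $\mathcal{G}_A$-entropy solution in the sense of Definition \ref{def:SCLSolTrace}, so that $\rho$ must coincide with the solution produced there, whence $u_x=\rho$. Thus the whole proof reduces to verifying two points: (i) away from the junction, $u_x$ satisfies the Kruzhkov entropy inequalities for the respective fluxes $H_L,H_R$; and (ii) at $x=0$ the strong traces of $u_x$ belong to the germ $\mathcal{G}_A$ of \eqref{eq:GAF0}.

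For (i) I would invoke the classical one-dimensional correspondence between convex Hamilton-Jacobi equations and scalar conservation laws: on each open half-line the equation $u_t+H_\alpha(u_x)=0$ is a standard HJ equation with $u$ Lipschitz, so by \cite{Caselles92, colombo-Perrollaz-Sylla} its spatial derivative is the entropy solution of $\rho_t+H_\alpha(\rho)_x=0$. The derivative bounds in \eqref{eq:u0} are propagated, so that $\rho(t,\cdot)$ stays in $Q$, and thanks to the strong convexity $H_\alpha''\ge\delta$ the strong one-sided traces $\rho(t,0^\pm)$ exist (the trace result recalled in the text, see also \cite{panov-trace}).

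The heart of the matter is (ii), and this is the step I expect to be the main obstacle. I would extract it from two structural features of the HJ solution. First, since a viscosity solution of \eqref{eq:HJstrong} is continuous across the vertex, $t\mapsto u(t,0)$ is well defined, and combining the continuity relation $u(t,0^-)=u(t,0^+)$ with the equations $u_t=-H_\alpha(\rho)$ read through strong traces up to the interface gives the Rankine-Hugoniot identity $H_L(\rho(t,0^-))=H_R(\rho(t,0^+))$, which is precisely the flux-continuity constraint satisfied by every pair of $\mathcal{G}_A$. Second, I must show that this common flux value equals $\bar F_{A}(\rho(t,0^-),\rho(t,0^+))=\max\{A,H_L^+(\rho(t,0^-)),H_R^-(\rho(t,0^+))\}$ from \eqref{eq:F-A}; this is exactly the content of the flux-limited junction condition, and it is what upgrades ``flux continuity'' to genuine membership in $\mathcal{G}_A$.

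The delicate point is that the junction condition for $u$ holds only in the relaxed viscosity sense, through sub/super-solution test functions at the vertex, and not as a pointwise identity, whereas the germ condition is a pointwise statement on the traces for a.e.\ $t$. To bridge the two I would use the flux-limiter theory of \cite{imbert-monneau}: the reduced set of test functions and the characterization of flux-limited solutions allow one to read off $u_t(t,0)=-\bar F_{A}(\rho(t,0^-),\rho(t,0^+))$, combined with the strong-trace information from (i). Showing that the one-sided viscosity inequalities force exactly the $\max$ value---neither more nor less---is where the convexity assumption \eqref{ass:H} and the precise shape of the monotone envelopes $H_\alpha^\pm$ are essential. An alternative, and perhaps more robust, route that sidesteps this subtle equivalence is the numerical one announced in the abstract: approximate $u$ by a monotone Hamilton-Jacobi scheme whose discrete space increments solve, exactly, a conservative scheme for \eqref{eq:SCL-strong}, and then pass to the limit in both schemes (the HJ scheme to $u$, the conservation-law scheme to the $\mathcal{G}_A$-entropy solution) and in the exact discrete identity to obtain $u_x=\rho$ directly. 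In either case, once (i) and (ii) are in hand, uniqueness from Proposition \ref{pro:propertiesGAF0} closes the argument.
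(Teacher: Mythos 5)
Your overall architecture is sound, and in fact you have sketched \emph{both} of the paper's proofs. The numerical route you offer as a fallback is precisely the paper's primary argument (Section \ref{sec:5}): the discrete spatial increments of the Godunov scheme \eqref{eq:scheme-HJ} solve exactly the conservative scheme \eqref{eq:scheme-SCL}, Theorem \ref{th:CVHJ} gives $u_\Delta\to u$ locally uniformly, Theorem \ref{th:CVSCL} gives $p_\Delta\to\rho$ a.e.\ with a uniform $L^\infty$ bound, and passing to the limit in the identity $\iint u_\Delta \phi_x = -\iint p_\Delta\phi$ yields $u_x=\rho$ in the sense of distributions. Your ``direct'' route is the paper's alternative proof (Section \ref{sec:6}), with the same decomposition into (i) interior entropy inequalities via \cite{Caselles92, colombo-Perrollaz-Sylla} (the paper cites \cite{CP20, KR02}) and (ii) germ membership of the traces.

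In that direct route, however, there is a genuine gap, located exactly at the point you flag as delicate but then pass over. Knowing that $u$ is a flux-limited viscosity solution does \emph{not} let you ``read off'' $u_t(t,0)=-\bar F_A(\rho(t,0^-),\rho(t,0^+))$ from the reduced test functions of \cite{imbert-monneau}: the viscosity junction condition is a statement about smooth functions touching $u$ at the vertex, whereas the germ condition is a pointwise a.e.\ statement about the strong $L^1$ traces of $\rho=u_x$. Bridging the two requires knowing that for a.e.\ $t$ the one-sided derivatives $u_x(t,0^\pm)$ exist and coincide with the traces $\rho_L(t),\rho_R(t)$; this is hypothesis \eqref{hyphyp} in the paper, and it is not supplied by the flux-limiter theory. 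The paper's Lemma \ref{lem.rhoinGcond} shows that, \emph{granted} \eqref{hyphyp}, the traces equal the one-sided derivatives and germ membership then follows, not by citing a characterization, but by an explicit comparison argument: one builds a planar subsolution with slopes $k_L^\varepsilon>\rho_L(t)$ and $k_R^\varepsilon<\rho_R(t)$ and contradicts $u_t(t,0)=-H_L(\rho_L(t))$. Establishing \eqref{hyphyp} itself is what forces the whole semi-algebraic machinery (representation formula, Tarski-Seidenberg to show that $u(t,\cdot)$ is semi-algebraic and hence has one-sided derivatives, then approximation of general data by semi-algebraic ones), or alternatively an appeal to \cite{Monneau2023}. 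Note also that the inequality $H_L(\rho_L(t))\ge A$ needs a separate argument (Lemma \ref{lem:2}, using that $t\mapsto u(t,0)$ is a viscosity subsolution of $w_t+A\le 0$, together with the Rankine-Hugoniot identity of Lemma \ref{lem.unbis}); your sketch does not say where this lower bound comes from. In short: your fallback numerical proof is complete at the level of detail given and is the paper's actual proof; your primary direct proof has a hole that the paper fills with considerable extra work.
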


We propose two different proofs for this result. The first one {uses numerical schemes} for \eqref{eq:HJstrong} and \eqref{eq:SCL-strong}. More precisely, we propose a numerical approximation for  \eqref{eq:HJstrong} and we consider the numerical derivative of the solution, which gives an appropriate scheme for \eqref{eq:SCL-strong}. Since we have the convergence for the two schemes, we recover the result by passing to the limit. The first advantage of this proof is that it will be generalized in a future work to the non-convex case. The second advantage is that it can be {extended to the important situation of a} more general junction condition, as presented below.

The second approach is a direct proof in which we use a regularization method via the notion of {semi-algebraic} functions (see Section \ref{sec:6}).
\bigskip

\paragraph{General junction conditions.} Up to this point, we only considered a flux-limiter type of junction condition {(with flux-limiter $A$) at the junction point} $x=0$. However it is known that, in the specific setting considered  here, a large class of {coupling conditions} can be equivalently treated as a flux-limiter. Then we  present our result {in this  larger class}. More precisely, we want to consider the general problem

\begin{equation}
 \label{eq:HJ}
 \left\lbrace \begin{matrix}
 u_t + H_L(u_x) = 0 &\textrm{ if } x < 0 \\
 u_t + H_R(u_x) = 0 &\textrm{ if } x > 0 \\
 u_t + F_0(u_x(t,0^-), u_x(t,0^+)) = 0 &\textrm{ if } x = 0\\
 u(0,x)=u_0(x) &\textrm{ for } x \in \R
\end{matrix}\right.
\end{equation}
where the function $F_0: \mathbb{R}^2 \longrightarrow \mathbb{R}$ is called a {\em {desired} coupling condition}  and satisfies the following conditions
\begin{equation}\label{ass:F0}
   \left\{
   \begin{array}{ll}
   \textbf{(Regularity )}&F_0 \textrm{ is Lipschitz continuous and piecewise } C^1(\R^2) \\
   \\
   \textbf{(Monotonicity)} &\begin{array}{l}
    F_0  \textrm{ is {non decreasing} in the first variable}    \\
      \textrm{  and {non increasing} in the second one}
   \end{array}  \\
   \\ \textbf{(Semi-coercivity)} &\displaystyle{\lim_{{\max(0,p_L,-p_R)\to +\infty}}F_0(p_L,p_R)=+\infty}\\
      \\ \textbf{(Boundedness of the solution)} & F_0(a_L,a_R)=F_0(c_L,c_R)=0\\
   \end{array}
   \right.
\end{equation}
Note that the last assumptions will imply that the solutions live in the {box} $Q$ and is naturally satisfied if the junction condition is of the form \eqref{eq:F-A}.
{Moreover notice that it is possible to show a posteriori that the third condition of (\ref{ass:F0}) is not seen by the solution $u$ of (\ref{eq:HJ}) whose gradient $((u_x(t,\cdot))_{|(-\infty,0)},(u_x(t,\cdot))_{|(0,+\infty)})$ stays in the box $Q=[a_L,c_L]\times [a_R,{c_R}]$, {if} the initial data $((\partial_x u_0)_{|(-\infty,0)},(\partial_x u_0)_{|(0,+\infty)})$ does it.

}

It is well-known that, in general, one cannot expect to have a strong viscosity solution for \eqref{eq:HJ}, in the sense that the junction condition is satisfied in the viscosity sense (see Definition \ref{defi:strongHJ} below). Nevertheless, it is always possible to define a weak viscosity solution, meaning that either the equation or the junction condition is satisfied at $x=0$ (see Definition \ref{defi:weakHJ} below).
We are now interested in the corresponding SCL. Formally, we can make the following calculation  with $\rho:=u_x$ {(say with $H(\rho)=0$ at $x=\pm \infty$)}
$$u_t = \partial_t \int_{-\infty}^x \rho \d x =  \int_{-\infty}^x \partial_t \rho \d x
= - \int_{-\infty}^x  {(H(\rho))_x} \d x = -H(\rho).$$
Then {for a solution $u$ of  problem \eqref{eq:HJ}, we expect $\rho:=u_x$ to solve  the scalar conservation law  problem}
\begin{equation}
 \label{eq:SCL}
 \left\lbrace \begin{matrix}
 \rho_t + H_L(\rho)_x = 0 &\textrm{ if } x < 0 \\
 \rho_t + H_R(\rho)_x = 0 &\textrm{ if } x > 0 \\
 H_{L}(\rho(t,0^-)) =H_{R}(\rho(t,0^+))= F_0(\rho(t,0^-), \rho(t,0^+))  &\textrm{ if } x = 0 \\
 \rho(0,x)=\rho_0(x) &\textrm{ for } x \in \R.
\end{matrix}\right.
\end{equation}
However, this problem does not admit a {solution whose traces satisfy the third equation of \eqref{eq:SCL} in general for any given $F_0$ satisfying (\ref{ass:F0})} and one has to relax the junction condition. We recall in Subsection \ref{subsec:2.3} how this problem has to be solved.

We then have the following result.
\begin{theorem}[Viscosity versus entropy solutions:  {desired} conditions]\label{th:main2}
Let $u_0$ {satisfy} \eqref{eq:u0} and denote by $\rho_0 =  (u_0)_x$.
Let $H_{L,R}$ satisfying \eqref{ass:H} and  $F_0$ satisfying \eqref{ass:F0}.
Let $u$ be the unique {weak} viscosity solution to \eqref{eq:HJ} in the sense of Definition \ref{defi:weakHJ} and $\rho$ be the unique $F_0$-admissible solution to \eqref{eq:SCL} in the sense of Definition \ref{defi:weakSCL}.
Then, in the distributional sense, {we have} $$ u_x = \rho.$$
\end{theorem}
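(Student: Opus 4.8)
The plan is to deduce Theorem~\ref{th:main2} from Theorem~\ref{th:main} by replacing the desired coupling condition $F_0$ with an equivalent flux-limiter. The guiding principle, recalled in Subsection~\ref{subsec:2.3}, is that under \eqref{ass:F0} there should be a single effective flux limiter $A=A(F_0)\in[H_0,0]$ that governs both the Hamilton--Jacobi problem and the scalar conservation law simultaneously. Once $A(F_0)$ is produced and shown to be consistent on the two sides, the identity $u_x=\rho$ is immediate from Theorem~\ref{th:main}, since $u$ will be the flux-limited solution of \eqref{eq:HJstrong} for $A(F_0)$ and $\rho$ the $\mathcal G_{A(F_0)}$-entropy solution of \eqref{eq:SCL-strong}.

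On the Hamilton--Jacobi side, I would invoke the reduction of general junction conditions to flux-limiters from \cite{imbert-monneau}: for a monotone, semi-coercive $F_0$ as in \eqref{ass:F0}, any weak viscosity solution of \eqref{eq:HJ} in the sense of Definition~\ref{defi:weakHJ} coincides with the flux-limited solution of \eqref{eq:HJstrong} for a flux limiter $A(F_0)$ defined by the reduction of $F_0$ to the form $\bar F_{A(F_0)}$ of \eqref{eq:F-A}. The point is that the relaxed alternative built into Definition~\ref{defi:weakHJ} collapses at $x=0$ to the single flux-limited condition with the correctly tuned $A$, and the comparison principle of \cite{imbert-monneau} then guarantees that $u$ is the unique such solution. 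On the conservation-law side, I would show that the $F_0$-admissible solution of \eqref{eq:SCL} in the sense of Definition~\ref{defi:weakSCL} is exactly the $\mathcal G_{A}$-entropy solution of \eqref{eq:SCL-strong} in the sense of Definition~\ref{def:SCLSolTrace} for the \emph{same} $A=A(F_0)$. Concretely, one computes the set of admissible stationary trace pairs $(k_L,k_R)$ attached to the desired condition, namely those satisfying $H_L(k_L)=H_R(k_R)=F_0(k_L,k_R)$, together with their relaxation, and checks that after relaxation this set coincides with the germ $\mathcal G_{A}$ of \eqref{eq:GAF0}; its being maximal, $L^1$-dissipative and complete is supplied by Proposition~\ref{pro:propertiesGAF0}, which yields well-posedness and lets one identify the two notions of solution.

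The main obstacle is precisely the consistency of these two constructions: one must verify that the effective flux limiter extracted from the Hamilton--Jacobi relaxation of $F_0$ is exactly the one whose germ $\mathcal G_A$ encodes the relaxed $F_0$-admissibility for the conservation law. This is the analytic heart of the correspondence, because the two relaxations are carried out in different languages (viscosity test functions and the reduction operator on one side; stationary profiles, strong traces and $L^1$-dissipativity of germs on the other), so the agreement of the values of $A$ is not a formality. A clean way to circumvent the risk of a mismatch is to re-run the numerical route of Theorem~\ref{th:main} directly on the general problem: discretize \eqref{eq:HJ} with the desired condition $F_0$, take the spatial difference quotient to obtain a monotone scheme for \eqref{eq:SCL} with the discretized $F_0$, and pass to the limit. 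Since the scheme for $\rho$ is literally the derivative of the scheme for $u$, the same effective condition is produced on both sides \emph{by construction}, which forces the two flux limiters to coincide and delivers $u_x=\rho$ without having to match the viscosity and entropy relaxations by hand. I would therefore present the reduction to Theorem~\ref{th:main} as the conceptual argument, and fall back on the numerical convergence statement to secure the compatibility step.
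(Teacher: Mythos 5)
Your proposal is correct, and in fact both of your routes appear in the paper: the reduction to Theorem \ref{th:main} is exactly what the authors note immediately after stating Theorem \ref{th:main2} (``this result can be seen as a direct consequence of Theorem \ref{th:main}''), while your fallback --- discretizing \eqref{eq:HJ} with the desired condition $F_0$, observing that the SCL scheme is literally the spatial difference quotient of the HJ scheme, and passing to the limit via Theorems \ref{th:CVHJ} and \ref{th:CVSCL} together with the distributional identity $\iint u_\Delta \phi_x = -\iint p_\Delta \phi$ --- is precisely the proof written out in Section \ref{sec:5}. The one place where you overestimate the difficulty is the ``consistency'' step you call the analytic heart of the argument. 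In the paper's framework there is nothing to match: Definition \ref{defi:weakSCL} \emph{defines} an $F_0$-admissible solution of \eqref{eq:SCL} to be a $\mathcal G_{A_{F_0}}$-entropy solution of \eqref{eq:SCL-strong}, with $A_{F_0}$ the very constant produced by Lemma \ref{lem:AF0}, and Theorem \ref{th:equivHJ} identifies the weak solution of \eqref{eq:HJ} with the strong solution of \eqref{eq:HJstrong} for that same constant; hence the reduction route closes immediately, with no computation of stationary trace pairs or comparison of relaxations required. The genuinely nontrivial question you raise --- that the intrinsic Bardos--Leroux--Nedelec/transmission-condition relaxation of $F_0$ in \eqref{eq:SCL-weak}--\eqref{eq:TransmissionCondition} produces the same effective condition $\bar F_{A_{F_0}}$ --- is acknowledged in Subsection \ref{subsec:2.3} but explicitly deferred by the authors to future work, and is not needed for the theorem as stated. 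Where the numerical route earns its keep is therefore not in avoiding a mismatch but in the content of Theorem \ref{th:CVSCL} itself: the scheme is run with the desired condition $F_0$, and the relaxed condition $\bar F_{A_{F_0}}$ emerges only in the limit, which is the paper's main new contribution on the SCL side.
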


This result can be seen as a direct consequence of Theorem \ref{th:main}. Indeed, it is shown in \cite{imbert-monneau} that it possible to construct a flux limiter $A_{F_0}$ depending on $F_0$ (see Lemma \ref{lem:AF0} for this construction) such that the unique weak solution to \eqref{eq:HJ} is in fact the unique strong solution to \eqref{eq:HJstrong} with $A$ replaced by $A_{F_0}$. As the solution of \eqref{eq:SCL} can also be interpreted as the solution of \eqref{eq:SCL-strong}, the result is straightforward. However, we will propose a direct proof of this result. Indeed, the proof using the numerical scheme can be done directly with this type of junction condition. The main point is the following: in the numerical scheme, we will put an approximation of the expected junction condition $F_0$, but at the limit where the space and time steps go to $0$, we will recover the relaxed flux-limited junction condition defined with ${A_{F_0}}$. More precisely, we have the following meta-theorem, which statement is made precise in Theorem \ref{th:CVSCL}.

\begin{theorem}[Numerical approximation for SCL:  {desired} condition]
Let $\rho^\Delta$ (with $\Delta=(\Delta t,\Delta x)$) be the numerical solution of \eqref{eq:SCL} (with the junction condition given by $F_0$). Then, there exists  a flux limiter $A_{F_0}$ depending on $F_0$  such that, as $\Delta$ goes to zero, $\rho^\Delta$ converges to the unique solution to \eqref{eq:SCL-strong} with $A$ replaced by $A_{F_0}$
\end{theorem}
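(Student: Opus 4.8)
The plan is to reduce the statement to the already-established correspondence between the Hamilton-Jacobi and the scalar conservation law, by viewing the SCL scheme as the discrete spatial derivative of a monotone scheme for the HJ problem \eqref{eq:HJ}. Concretely, on a uniform mesh of size $\Delta x$ I would introduce nodal values $u^\Delta_i$ produced by an explicit monotone scheme for \eqref{eq:HJ}: on each branch a consistent monotone numerical Hamiltonian built from $H_L$ and $H_R$, and at the junction node the discretized desired condition $F_0$. The key algebraic identity is that the forward differences $\rho^\Delta_i := (u^\Delta_{i+1}-u^\Delta_i)/\Delta x$ satisfy exactly the conservative SCL scheme for \eqref{eq:SCL} with the junction flux prescribed by $F_0$. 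Thus $\rho^\Delta = D^+u^\Delta$, and it suffices to prove convergence of $u^\Delta$ and to pass to the limit in the discrete derivative.

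First I would establish that the HJ scheme is monotone and stable. Monotonicity follows, under a CFL condition on $\Delta t / \Delta x$, from the monotonicity of the branch numerical Hamiltonians together with the monotonicity of $F_0$ in \eqref{ass:F0} (nondecreasing in the first, nonincreasing in the second argument). Stability is obtained as a discrete gradient bound: the boundedness assumption $F_0(a_L,a_R)=F_0(c_L,c_R)=0$ in \eqref{ass:F0}, combined with \eqref{eq:u0}, forces the discrete gradient $\rho^\Delta$ to remain in the box $Q$ for all times, which in turn gives a uniform Lipschitz bound on $u^\Delta$. A Barles--Souganidis type argument, adapted to the junction as in the HJ theory on networks, then yields that $u^\Delta$ converges locally uniformly to a relaxed (weak) viscosity solution of \eqref{eq:HJ} in the sense of Definition \ref{defi:weakHJ}; uniqueness of the latter identifies the limit unambiguously.

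Next I would identify the effective flux limiter and transfer back to the SCL. By the Imbert--Monneau reduction recalled in Lemma \ref{lem:AF0}, the weak viscosity solution of \eqref{eq:HJ} with desired condition $F_0$ coincides with the strong flux-limited solution of \eqref{eq:HJstrong} for a limiter $A_{F_0}\in[H_0,0]$ depending only on $F_0$; this is precisely where $A_{F_0}$ is born, and it is in general strictly larger than the value dictated by $F_0$, reflecting the relaxation. Applying Theorem \ref{th:main} to this limit solution gives $u_x = \rho$, where $\rho$ is the unique $\mathcal G_{A_{F_0}}$-entropy solution of \eqref{eq:SCL-strong}. Since $u^\Delta \to u$ locally uniformly and the discrete gradients are uniformly bounded in $Q$, a compactness argument (discrete $BV$-in-space estimates away from the junction, together with the strong trace property for the strictly convex fluxes) lets me pass to the limit in $\rho^\Delta = D^+u^\Delta$ and conclude $\rho^\Delta \to \rho = u_x$, which is the desired claim.

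The main obstacle is the relaxation at the junction: proving that the naively discretized desired condition $F_0$ produces, in the limit, exactly the flux-limited germ $\mathcal G_{A_{F_0}}$ of \eqref{eq:GAF0} and no other effective condition. The delicate point is the behaviour in the boundary layer near $x=0$, where one must rule out spurious oscillations of the traces of $\rho^\Delta$ and show that the limiting junction flux is the maximal one compatible with the monotone scheme. I expect the cleanest way to control this is to keep the analysis at the HJ level, where monotonicity and the comparison principle make the passage to the limit robust, and then to read off the junction condition for $\rho$ from the limit HJ solution via Theorem \ref{th:main}, rather than attempting a direct trace analysis of $\rho^\Delta$ at the junction.
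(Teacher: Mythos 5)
Your first step — monotonicity, stability, and locally uniform convergence of the junction HJ scheme to the relaxed (weak, hence strong flux-limited with limiter $A_{F_0}$) solution of \eqref{eq:HJ} — is legitimate and is exactly the content of Theorem \ref{th:CVHJ}, which the paper borrows from the error estimates of Guerand--Koumaiha. The genuine gap is in your second step: you invoke Theorem \ref{th:main} to identify $u_x$ with the $\mathcal G_{A_{F_0}}$-entropy solution $\rho$ of \eqref{eq:SCL-strong}, and then transport the convergence of $u^\Delta$ to $\rho^\Delta$. But in this paper Theorem \ref{th:main} is proved (Section \ref{sec:5}) by combining Theorem \ref{th:CVHJ} with precisely the statement you are asked to prove (Theorem \ref{th:CVSCL}); worse, even the \emph{existence} of $\rho$ rests on that statement, since completeness of the germ (Lemma \ref{lem::t4}), which is what makes Theorem \ref{th:eqSolEntropy}(ii) applicable, is obtained from the construction carried out in the proof of Theorem \ref{th:CVSCL}. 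So, as written, your argument assumes its own conclusion. It can be repaired only by appealing instead to the independent semi-algebraic proof of Theorem \ref{th:main} in Section \ref{sec:6} (which also furnishes existence of $\rho$ as $u_x$), but that proof is explicitly only a sketch, and you would need to say that this is the version you rely on; the paper's own proof of the present statement is deliberately self-contained at the SCL level and uses neither Theorem \ref{th:main} nor any HJ convergence.

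The idea you are missing is the mechanism by which the paper sees the relaxation $F_0\rightsquigarrow \bar F_{A_{F_0}}$ directly in the scheme, with no detour through viscosity solutions. The discrete entropy inequality (Lemma \ref{lem:InegEntropiqueDiscrete}) carries an $O(1)$ error at the junction, $R_{F_0}(k_L,k_R)=|H_L(k_L)-F_0(k_L,k_R)|+|H_R(k_R)-F_0(k_L,k_R)|$, which for generic constants $(k_L,k_R)$ does not vanish — this is the discrete trace of the fact that the desired condition $F_0$ is not attained in the limit. The key observation is that $R_{F_0}$ vanishes exactly on the three pairs of the set $\mathcal E_{A_{F_0}}$ of \eqref{eq:EA}: on $(a_L,a_R)$ and $(c_L,c_R)$ because of the normalization $F_0(a_L,a_R)=F_0(c_L,c_R)=0$ in \eqref{ass:F0}, and on $(\bar p_L^{A_{F_0}},\bar p_R^{A_{F_0}})$ by the very definition of $A_{F_0}$ in Lemma \ref{lem:AF0}. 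After the compactness step (Lemma \ref{lem:Compactness}, obtained from a localized discrete Oleinik estimate, which plays the role of your BV bounds away from the junction), one passes to the limit in the entropy inequalities with test functions concentrating at $x=0$ and gets $q_L(\gamma_L\rho,k_L)-q_R(\gamma_R\rho,k_R)\ge 0$ for all $(k_L,k_R)\in\mathcal E_{A_{F_0}}$; Lemma \ref{lem:generate} (the set $\mathcal E_{A_{F_0}}$ generates the germ on $Q$) then forces $(\gamma_L\rho,\gamma_R\rho)\in\mathcal G_{A_{F_0}}$ a.e.\ in time, and uniqueness follows from Theorem \ref{th:eqSolEntropy}(i). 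This is where the effective limiter $A_{F_0}$ is born at the discrete level; in your proposal it is imported from the HJ theory, at the cost of the circularity described above.
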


\begin{remark}
Note that this result was already known at the Hamilton-Jacobi level (see \cite{guerand}).
\end{remark}

\begin{remark}
It {is} also possible to consider an even simpler junction constituted of only one edge and one vertex. In that case, our result remains valid with analogous proofs. For instance, the analogue of Theorem \ref{th:main} is precisely the following:
 \begin{theorem}[Viscosity versus entropy solutions: the half line]\label{th::t3}
Let $u_0$ {satisfy} \eqref{eq:u0} on $(0,+\infty)$ and  {let us set}  $\rho_0 =  (u_0)_x$.
Let $H_{R}$ satisfying \eqref{ass:H} and $A\in [\min H_R,0]$.
Let $u$ be the unique viscosity solution {to}
$$\left\lbrace \begin{array}{rlll}
u_t + H_R(u_x) &= 0 &\quad \textrm{if}\quad  &x > 0 \\
u_t + \max\left\{A,H_R^- (u_x(t,0^+))\right\} &= 0 &\quad \textrm{if}\quad  &x = 0\\
u(0,x)&=u_0(x) &\quad \textrm{for} \quad &x \in (0,+\infty)
\end{array}\right.$$
and $\rho$ be the unique $\mathcal G^{1}_A$-entropy solution {to}
$$\left\lbrace \begin{array}{rlll}
 \rho_t + H_R(\rho)_x &= 0 &\quad \textrm{if} \quad &x > 0 \\
 \rho(t,0^+)&\in \mathcal G^1_A  &\quad \textrm{if} \quad &x = 0 \quad \mbox{and for a.e. $t\in (0,+\infty)$}\\
 \rho(0,x)&=\rho_0(x) &\quad \textrm{for} \quad &x \in (0,+\infty).
\end{array}\right.$$
with
$$\mathcal G^1_A:=\left\{k_R\in \R,\quad H_R(k_R)=\max\left\{A,H_R^-(k_R)\right\}\right\}$$
Then, in the distributional sense, we have $$ u_x = \rho.$$
\end{theorem}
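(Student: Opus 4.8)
The plan is to reproduce the numerical-scheme argument used for Theorem \ref{th:main}, taking advantage of the fact that the single-edge geometry eliminates the left trace entirely. First I would discretize the half-line by nodes $x_j = j\,\Delta x$, $j \ge 0$, and time by $t_n = n\,\Delta t$, and write down a monotone Godunov-type scheme for the HJ problem: the interior update at $j \ge 1$ is built from $H_R$, while the boundary update at $j = 0$ discretizes the flux-limited condition $\max\{A, H_R^-(\cdot)\}$. The key structural properties to check are monotonicity and $L^\infty$-stability of the discrete gradient under a CFL restriction on $\Delta t/\Delta x$, together with the gradient bound that keeps the discrete slope $(U^n_{j+1}-U^n_j)/\Delta x$ inside $[a_R,c_R]$ whenever $\rho_0$ does (this is what makes the boundedness hypothesis invisible to the solution, exactly as in the two-edge case).

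Second, I would prove convergence of this scheme to the unique viscosity solution $u$ of the HJ problem, via a half-relaxed-limits argument adapted to the junction framework. Here I can invoke directly the comparison principle and the flux-limiter formalism of \cite{imbert-monneau}: the only modification relative to Theorem \ref{th:main} is that the junction condition involves a single incoming slope, so the relevant test-function analysis at $x=0$ reduces to the one-sided case.

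Third, and this is the heart of the correspondence, I would differentiate the scheme. Setting $\rho^\Delta_j := (U^n_{j+1}-U^n_j)/\Delta x$, a telescoping computation shows that $\rho^\Delta$ solves a conservative monotone finite-volume scheme for the SCL, whose numerical flux at the boundary cell is obtained by differencing the discrete HJ boundary operator. The main point is to verify that differencing $\max\{A, H_R^-(\cdot)\}$ produces a boundary condition encoding exactly the germ $\mathcal G^1_A = \{k_R : H_R(k_R) = \max\{A, H_R^-(k_R)\}\}$. I would then derive discrete entropy inequalities and a uniform bound on the total variation to obtain strong $L^1_{loc}$ compactness, and pass to the limit to identify $\lim_\Delta \rho^\Delta$ as the unique $\mathcal G^1_A$-entropy solution $\rho$. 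For this last identification one needs the one-edge analogue of Proposition \ref{pro:propertiesGAF0}, namely that $\mathcal G^1_A$ is maximal, $L^1$-dissipative and complete, which is considerably simpler here since the germ is a subset of $\R$ rather than of $Q$. Because $\rho^\Delta$ is by construction the exact discrete $x$-derivative of $u^\Delta$, passing to the limit in both schemes and using uniqueness yields $u_x = \rho$ in the distributional sense.

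The step I expect to be the main obstacle is precisely this matching between the differentiated HJ boundary scheme and the germ $\mathcal G^1_A$: one must check that the relaxation performed automatically by the numerical flux at $x=0$ reproduces the flux-limited value $A$ rather than the naive condition, and that the strong trace $\rho(t,0^+)$, whose existence for strictly convex fluxes is recalled via (\ref{eq::g8}) and \cite{panov-trace}, lands in $\mathcal G^1_A$ for a.e.\ $t$. An alternative, shorter route would be to regard the half-line as the degenerate $1{:}1$ junction in which the left branch is absent, so that the term $H_L^+(p_L)$ drops out of the junction condition and $\mathcal G_A$ degenerates to $\mathcal G^1_A$, whence the conclusion follows from the already-established two-edge result; or to run the direct semi-algebraic argument of Section \ref{sec:6} verbatim, which again simplifies because only one edge carries a nontrivial profile.
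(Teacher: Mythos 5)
Your proposal is correct and follows essentially the same route as the paper: the paper gives no separate proof of Theorem \ref{th::t3}, asserting only that it holds ``with analogous proofs,'' meaning exactly the adaptation you describe — the monotone Godunov scheme for the flux-limited HJ problem, its discrete derivative as a finite-volume scheme for the SCL, discrete entropy inequalities plus Oleinik-type compactness (the paper's Proposition \ref{pro:compactnessbranch} is already stated on a single branch), identification of the trace in the one-edge germ, and the simplified one-edge analogue of Proposition \ref{pro:propertiesGAF0}. Your two suggested shortcuts (degenerating the 1:1 junction, or rerunning the semi-algebraic argument) likewise match the paper's stated alternatives, so there is nothing to flag.
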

The above notion of solution for a scalar conservation law with boundary condition is equivalent to the one given by the standard Bardos-Leroux-Nedelec approach (see \cite{BLN, CR2015}).
\end{remark}

\subsection{Outline}
In Section \ref{sec:3}, we recall the different definitions of {solutions} for \eqref{eq:HJstrong}, \eqref{eq:SCL-strong}, \eqref{eq:HJ} and \eqref{eq:SCL} and we give the link between weak and strong {viscosity} solutions. We also prove {useful} properties on the germ $\mathcal G_{A}$ and we  explain how the flux limiter $A_{F_0}$ is constructed. Section \ref{sec:4} is devoted to the study of the numerical scheme for \eqref{eq:HJ} (Subsection \ref{subsec:4.1}) and \eqref{eq:SCL} (Subsection \ref{subsec:4.1}) while we prove Theorems \ref{th:main} and \ref{th:main2} using these numerical schemes in Section \ref{sec:5}. We propose a direct proof of Theorem \ref{th:main} using regularization with semi-algebraic functions in Section \ref{sec:6}. {Finally Section \ref{s6} is an  appendix where we collect complementary results, which are either new, or not accessible  with full details in the literature. In Subsection \ref{s6a} we give discrete entropy inequalities on a junction, in Subsection \ref{s6b} we give a local compactness result for numerical solutions of conservation laws with strictly convex flux, and in Subsection \ref{s6c} we show that Hamilton-Jacobi germs are not $L^1$-dissipative for $N\ge 3$ branches.}\\

\section{Notions of solution}\label{sec:3}
We begin this section by recalling the definition and some properties of equation \eqref{eq:HJ} in Subsection \ref{subsec:2.2} and of equation \eqref{eq:SCL} in Subsection \ref{subsec:2.3}. Finally, in Subsection \ref{subsec:2.1}, we explain how we construct the flux limiter $A_{F_0}$ from a general condition $F_0$.

\subsection{Definition of weak and strong solutions for Hamilton-Jacobi equations}\label{subsec:2.2}
We begin to recall the notion of weak viscosity solutions to \eqref{eq:HJ}. We consider the set of test functions on the junction ${J_T}:=(0,T)\times \R$:
$${C^1_\wedge(J_T)}:=\{\varphi\in C^0({J_T}),\; \textrm {the restrictions of }\varphi \textrm{ to } (0,T)\times (-\infty,0]\textrm{ and to }  (0,T)\times [0,\infty) \textrm{ are } C^1\}.$$
We also recall the definition of upper and lower semi-continuous envelopes $u^*$ and $u_*$ of a (locally bounded) function $u$ defined on $[0, T )\times \R$,
$$u^*(t, x) = \limsup_{(s,y)\to(t,x)} u(s, y) \quad {\rm and}\quad
u_*(t, x) = \liminf_{(s,y)\to(t,x)} u(s, y).$$

We begin with the notion of strong viscosity solution for which the junction condition is satisfied in a strong sense.

\begin{definition}[Strong viscosity solution]\label{defi:strongHJ}
Let us consider a function $u: \Gamma_T \to \R$.

\noindent {\bf i) (Strong viscosity subsolution)}\\
We say that $u$ is a strong viscosity {subsolution} to \eqref{eq:HJ} if for any point $(t_0,x_0)\in {J_T}$ and any function $\varphi \in {C^1_\wedge(J_T)}$ such that {$u^*-\varphi$} reaches a local maximum at $(t_0,x_0)$
we have
$$\left\{ \begin{matrix}
\varphi_t(t_0,x_0) + H_L(\varphi_x(t_0,x_0)) \le 0 &\textrm{ if } x_0 < 0 \\
\varphi_t(t_0,x_0) + H_R(\varphi_x(t_0,x_0)) \le 0 &\textrm{ if } x_0 > 0 \\
\end{matrix}\right.$$
when $x_0\ne 0$ and
$$\varphi_t(t_0,x_0) + F_0(\varphi_x(t_0,x_0^-),\varphi_x(t_0,x_0^+) ) \le 0$$
when $x_0=0$.
We call $u$ a strong $F_0$-{subsolution}.\\
\noindent {\bf ii) (Strong viscosity supersolution)}\\
We say that $u$ is a strong viscosity {supersolution} to \eqref{eq:HJ} if for any point $(t_0,x_0)\in {J_T}$ and any function $\varphi \in {C^1_\wedge(J_T)}$ such that ${u_*-\varphi}$ reaches a local minimum at $(t_0,x_0)$
we have
$$\left\{ \begin{matrix}
\varphi_t(t_0,x_0) + H_L(\varphi_x(t_0,x_0)) \ge 0 &\textrm{ if } x_0 < 0 \\
\varphi_t(t_0,x_0) + H_R(\varphi_x(t_0,x_0)) \ge 0 &\textrm{ if } x_0 > 0 \\
\end{matrix}\right.$$
when $x_0\ne 0$ and
$$\varphi_t(t_0,x_0) + F_0(\varphi_x(t_0,x_0^-),\varphi_x(t_0,x_0^+) )  \ge 0$$
when $x_0=0$.
We call $u$ a strong $F_0$-{supersolution}.\\
\noindent {\bf ii) (Strong viscosity solution)}\\
We say that $u$ is a strong viscosity solution to \eqref{eq:HJ}, {if $u$ is a strong viscosity {subsolution} to (\ref{eq:HJ}), and $u$} is a strong viscosity {supersolution} to (\ref{eq:HJ}). We call $u$ a strong $F_0$-solution.
\end{definition}
A first result of Imbert, Monneau \cite{imbert-monneau} is that when the junction condition is of the form ${\bar F_A}$ in (\ref{eq:F-A}), then the junction condition is satisfied strongly as in the previous definition. Nevertheless, this is not true for general junction condition and one has to consider weak viscosity solutions for which either the junction condition or the equation is satisfied at $x=0$.

\begin{definition}[Weak viscosity solution]\label{defi:weakHJ}
Let us consider a function $u: \Gamma_T \to \R$

\noindent {\bf i) (Weak viscosity subsolution)}\\
We say that $u$ is a weak viscosity {subsolution} to \eqref{eq:HJ} if for any point $(t_0,x_0)\in {J_T}$ and any function $\varphi \in {C^1_\wedge(J_T)}$ such that {$u^*-\varphi$} reaches a local maximum at $(t_0,x_0)$
we have
$$\left\{ \begin{matrix}
\varphi_t(t_0,x_0) + H_L(\varphi_x(t_0,x_0)) \le 0 &\textrm{ if } x_0 < 0 \\
\varphi_t(t_0,x_0) + H_R(\varphi_x(t_0,x_0)) \le 0 &\textrm{ if } x_0 > 0 \\
\end{matrix}\right.$$
when $x_0\ne 0$ and
$$
  \varphi_t(t_0,x_0) + H_L(\varphi_x(t_0,x_0^-)) \le 0\quad \mbox{or}\quad \varphi_t(t_0,x_0) + H_R(\varphi_x(t_0,x_0^+)) \le 0
  $$
  $$
  \mbox{or}\quad \varphi_t(t_0,x_0) + F_0(\varphi_x(t_0,x_0^-),\varphi_x(t_0,x_0^+) ) \le 0
$$
when $x_0=0$. We call $u$ a weak $F_0$-subsolution.\\
\noindent {\bf ii) (Weak viscosity supersolution)}\\
We say that $u$ is a weak viscosity {supersolution} to \eqref{eq:HJ} if for any point $(t_0,x_0)\in {J_T}$ and any function $\varphi \in {C^1_\wedge(J_T)}$ such that {$u_*-\varphi$} reaches a local minimum at $(t_0,x_0)$
we have
$$\left\{ \begin{matrix}
\varphi_t(t_0,x_0) + H_L(\varphi_x(t_0,x_0)) \ge 0 &\textrm{ if } x_0 < 0 \\
\varphi_t(t_0,x_0) + H_R(\varphi_x(t_0,x_0)) \ge 0 &\textrm{ if } x_0 > 0 \\
\end{matrix}\right.$$
when $x_0\ne 0$ and
$$
  \varphi_t(t_0,x_0) + H_L(\varphi_x(t_0,x_0^-)) \ge 0\quad \mbox{or}\quad \varphi_t(t_0,x_0) + H_R(\varphi_x(t_0,x_0^+)) \ge 0 
  $$
  $$\mbox{or}\quad \varphi_t(t_0,x_0) + F_0(\varphi_x(t_0,x_0^-),\varphi_x(t_0,x_0^+) ) \ge 0
$$
when $x_0=0$. We call $u$ a weak $F_0$-supersolution.\\
\noindent {\bf iii) (Weak viscosity solution)}\\
We say that a locally bounded function $u$ is a weak viscosity solution to (\ref{eq:HJ}), {if $u$ is a weak viscosity {subsolution} to (\ref{eq:HJ}), and $u$} is a weak viscosity {supersolution} to (\ref{eq:HJ}). We call $u$ a weak $F_0$-solution.
\end{definition}

An important result of Imbert, Monneau \cite{imbert-monneau} is that it is possible to relax the junction condition  in order to make the solution satisfy the junction condition strongly. We refer to Subsection \ref{subsec:2.1} for the construction of the relaxation and to Theorem \ref{th:equivHJ} for the precise result. Let us also mention that the existence and uniqueness (using a comparison principle) of the solutions of \eqref{eq:HJstrong} and \eqref{eq:HJ} is also proven in \cite{imbert-monneau}. In particular \eqref{eq:HJ} admits a strong solution if and only if $F_0$ is of the form $\bar F_A$ for some $A\in [H_0,0]$.

\subsection{Definition of solution for conservation law}\label{subsec:2.3}
We first recall that any solution to a Scalar Conservation Law {for $x\in (0,+\infty)$} with strongly convex flux has a strong trace at $x=0$ (see Panov \cite[Theorem 1.1]{panov-trace}).
For any {function $f : (0,T)\times \mathbb{R} \longrightarrow \mathbb{R}$,  we denote by $\gamma_{L,R}f$} the strong  left and right traces {of $f$} at $x=0$ when they exist. For instance for the left trace, this means that
\begin{equation}\label{eq::g8}
\ess \lim_{x\to  0^-}  \int_0^T  \left| f(t,x) - {\gamma_L}f(t) \right|  \d t = 0.
\end{equation}

Here we present the notion of solution we will {consider} for \eqref{eq:SCL-strong}. We consider an effective junction condition $F_A$ as defined in \eqref{eq:F-A} and we recall that the corresponding germ $\mathcal G_A$ is given by \eqref{eq:GAF0}.

\begin{definition}[Strong entropy solution]\label{def:SCLSolTrace}
Let $u_0$ satisfying \eqref{eq:u0} and denote by $\rho_0 =  (u_0)_x$.
We say that $\rho \in L^{\infty}((0,T)\times \mathbb{R})$ is a ``strong" $\mathcal G_A$-entropy solution to \eqref{eq:SCL-strong} if
\begin{enumerate}
\item $\rho$ is a weak solution to
 \begin{equation}\nonumber\left\lbrace \begin{matrix}
 \rho_t + H_L(\rho)_x = 0 &\textrm{ if } x < 0 \\
 \rho_t + H_R(\rho)_x = 0 &\textrm{ if } x > 0. \\
 \end{matrix}\right.
 \end{equation}
\item For any $\phi_{L} \in {C}^{\infty}_c([0,T)\times \mathbb{R}^{-})$ (resp. $\phi_{R} \in {C}^{\infty}_c([0,T)\times \mathbb{R}^{+})$) that is non-negative,
for any $k_L \in [a_L,c_L]$ (resp. $k_R \in [a_R,c_R]$) the following entropy inequalities hold
\begin{equation}\nonumber
\iint_{(0,T)\times \mathbb{R}^{-}} |\rho-k_L| (\phi_L)_t + \sign(\rho-k_L)\left[H_{L}(\rho) - H_{L}(k_L)\right] (\phi_L)_x
+ \int_\mathbb{R^-} |\rho_0(x) -k_L | \phi_L(0,x) \d x \geq 0
\end{equation}
\bigg(resp.
\begin{equation}\nonumber
\iint_{(0,T)\times \mathbb{R}^{+}} |\rho-k_R| (\phi_R)_t + \sign(\rho-k_R)\left[H_{R}(\rho) - H_{R}(k_R)\right] (\phi_R)_x + \int_\mathbb{R^+} |\rho_0(x) -k_R | \phi_R(0,x) \d x \geq 0\bigg).
\end{equation}
\item The strong traces satisfy the germ condition
\begin{equation}\nonumber
(\gamma_L\rho(t), \gamma_R\rho(t))\in \mathcal G_{A}\quad {\textrm{for a.e. $t\in (0,T)$}}.
\end{equation}
\end{enumerate}
\end{definition}

As proved in \cite{andreianov}, this notion of solution grants existence and uniqueness as soon as the germ $\mathcal G_A$ is $L^1$ dissipative{, maximal and complete}.
We begin by recalling the notion of {$L^1$-dissipativity,  maximality and completeness of a germ.

\begin{definition}[Germ and properties]$\mbox{ }$\\
\noindent {\bf i) (germ)}\\
We say that a set $\mathcal G \subset \R^2$ is a germ if any element of $\mathcal G$ satisfies the Rankine-Hugoniot condition, i.e.
$$H_L(k_L)=H_R(k_R)\quad \forall k=(k_L,k_R)\in \mathcal G.$$
\noindent {\bf ii) ($L^1$-dissipative germ)}\\
We say that a germ $\mathcal G$ is $L^1$-dissipative if for any $k=(k_L,k_R),\hat k=(\hat k_L,\hat k_R)\in \mathcal G$, we have
$$\text{sgn}(k_L-\hat k_L) (H_L(k_L)-H_L(\hat k_L)) \geq \text{sgn}(k_R-\hat k_R) (H_R(k_R)-H_R(\hat k_R)).
$$
\noindent {\bf iii) (maximal $L^1$-dissipative germ)}\\
{A} $L^1$-dissipative germ $\mathcal G$  is called
maximal if there is no  $L^1$-dissipative germ $\bar {\mathcal G}$ having $\mathcal G$ as a strict
subset.\medskip

\noindent {\bf iv) (complete $L^1$-dissipative germ)}\\
{A} $L^1$-dissipative germ $\mathcal G$  is called complete (on the box $Q$), if for every $\hat k=(\hat k_L,\hat k_R)\in Q$, there exists a strong  $\mathcal G_A$-entropy solution of \eqref{eq:SCL-strong}, with initial data $\rho_0= \hat k_L 1_{(-\infty,0)} + \hat k_R 1_{(0,+\infty)}$.
\end{definition}}

We then have the following theorem.
\begin{theorem}[Existence and uniqueness for \eqref{eq:SCL-strong}, \cite{andreianov}]\label{th:eqSolEntropy}
  Let $\rho_0$ be an initial data satisfying $\rho_0((-\infty,0))\times \rho_0((0,+\infty)) \subset Q$.
  \begin{itemize}
  \item[$(i)$]
If the germ $\mathcal G_A$ is $L^1$-dissipative and maximal, there exists at most one solution to \eqref{eq:SCL-strong} in the sense of Definition \ref{def:SCLSolTrace}.
 \item[$(ii)$] Furthermore, if the germ $\mathcal G_A$ is also complete (on the box $Q$), then there exists a unique solution to \eqref{eq:SCL-strong} in the sense of Definition \ref{def:SCLSolTrace}. 
 \end{itemize}
\end{theorem}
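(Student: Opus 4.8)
The statement is precisely the junction version of the $L^1$-dissipative solver theory of Andreianov, Karlsen and Risebro (\cite{andreianov}), so the plan is to reproduce its two pillars in the present setting: an $L^1$-contraction estimate for part $(i)$ and a Riemann-solver construction for part $(ii)$. Throughout I write $\Phi_\alpha(a,b):=\sign(a-b)\,(H_\alpha(a)-H_\alpha(b))$ for the Kruzhkov entropy flux attached to $H_\alpha$, $\alpha=L,R$, so that the $L^1$-dissipativity of $\mathcal G_A$ reads $\Phi_L(k_L,\hat k_L)\ge \Phi_R(k_R,\hat k_R)$ for every $(k_L,k_R),(\hat k_L,\hat k_R)\in\mathcal G_A$, and I record that $\Phi_\alpha$ is symmetric in its two arguments.

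\textbf{Uniqueness $(i)$.} Let $\rho,\hat\rho$ be two solutions in the sense of Definition \ref{def:SCLSolTrace} with $\rho_0=\hat\rho_0$. Away from the junction both are classical Kruzhkov entropy solutions of a uniformly convex scalar law, so the doubling-of-variables technique applied separately on $\{x<0\}$ and $\{x>0\}$ yields the Kato inequality $\partial_t|\rho-\hat\rho|+\partial_x\Phi_\alpha(\rho,\hat\rho)\le 0$ in $\mathcal D'(\{x\neq0\})$. Since \eqref{ass:H} makes the fluxes strictly convex, Panov's theorem (cf. \eqref{eq::g8} and \cite{panov-trace}) provides strong one-sided traces $\gamma_{L,R}\rho,\gamma_{L,R}\hat\rho$ at $x=0$. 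Integrating the two Kato inequalities over $(-\infty,0)$ and $(0,+\infty)$ and adding them (the contributions at $x=\pm\infty$ being discarded by a cut-off exploiting finite speed of propagation, $\rho-\hat\rho$ being localized as it starts from $0$) gives, for a.e. $t$,
\[
\frac{d}{dt}\int_{\R}|\rho-\hat\rho|\,\d x\ \le\ \Phi_R(\gamma_R\rho,\gamma_R\hat\rho)(t)-\Phi_L(\gamma_L\rho,\gamma_L\hat\rho)(t).
\]
By item 3 of Definition \ref{def:SCLSolTrace} both trace pairs lie in $\mathcal G_A$, so $L^1$-dissipativity makes the right-hand side $\le0$; hence $t\mapsto\|\rho(t)-\hat\rho(t)\|_{L^1}$ is non-increasing and $\rho\equiv\hat\rho$. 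Maximality guarantees that this trace constraint is the correct saturated one: by the definition of maximality, any Rankine--Hugoniot pair dissipative against every element of $\mathcal G_A$ must already belong to $\mathcal G_A$ (using symmetry of $\Phi_\alpha$ and $\Phi_\alpha(a,a)=0$, adjoining such a pair would otherwise enlarge $\mathcal G_A$ while keeping it $L^1$-dissipative), so no admissible solution can carry interface traces outside $\mathcal G_A$ and the contraction applies to every pair of solutions.

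\textbf{Existence $(ii)$.} Here completeness enters. By definition it provides, for each Riemann datum $(k_L,k_R)\in Q$, a $\mathcal G_A$-entropy solution of the corresponding Riemann problem; together with $L^1$-dissipativity and maximality this self-similar solver is $L^1$-contractive and monotone and serves as the building block of an approximation scheme. I would construct approximate solutions $\rho^\Delta$ by the Godunov/front-tracking scheme built on this Riemann solver (equivalently, by the monotone finite-volume scheme analysed in Section \ref{sec:4}), obtaining uniform $L^\infty$ bounds valid in $Q$ together with the local compactness afforded by strict convexity (the result of Subsection \ref{s6b}). Extracting an a.e. limit $\rho$, a Lax--Wendroff/consistency argument shows that $\rho$ satisfies the Kruzhkov entropy inequalities of items 1--2 of Definition \ref{def:SCLSolTrace} on each half-line, and Panov's theorem again yields strong traces. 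The final and most delicate step is item 3: for each $k=(k_L,k_R)\in\mathcal G_A$, completeness furnishes the associated stationary $\mathcal G_A$-entropy solution, and comparing $\rho$ with it through the interface entropy dissipation inherited from the monotone scheme shows that $(\gamma_L\rho,\gamma_R\rho)$ is $L^1$-dissipative against $k$; letting $k$ range over all of $\mathcal G_A$, maximality forces $(\gamma_L\rho,\gamma_R\rho)\in\mathcal G_A$ a.e., as required.

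\textbf{Main obstacle.} The routine ingredients are the interior doubling argument and the scheme's $L^\infty$ and compactness estimates. The genuine difficulty is concentrated entirely at $x=0$: on the one hand, making rigorous the boundary flux terms $\Phi_{L,R}(\text{traces})$ in the contraction estimate, which requires the strong-trace machinery rather than mere weak convergence; on the other hand, recovering the germ membership of the limiting traces in the existence proof. Both reduce to the interplay between strict convexity (which delivers strong traces) and the dissipativity/maximality/completeness of $\mathcal G_A$ (which converts one-sided trace information into the two-sided germ constraint). This is exactly the point at which the structural properties of $\mathcal G_A$ established in Proposition \ref{pro:propertiesGAF0} are indispensable.
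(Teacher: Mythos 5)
This theorem is not proved in the paper at all: it is quoted from the $L^1$-dissipative solver theory of \cite{andreianov}, so the only meaningful comparison is with that theory and with the machinery the paper builds around it. Your reconstruction is correct in substance and follows the AKR route. For $(i)$, the combination of the Kato inequality on each half-line, Panov's strong traces, and the pointwise application of $L^1$-dissipativity to the two trace pairs is exactly the AKR contraction argument, and your sign bookkeeping is right; note, however, that under the trace-based Definition \ref{def:SCLSolTrace}, item 3 already places both trace pairs in $\mathcal G_A$, so $L^1$-dissipativity alone yields uniqueness, and maximality (whose characterization you state and justify correctly: a Rankine--Hugoniot pair dissipative against every element of $\mathcal G_A$ can be adjoined without destroying dissipativity, hence must already lie in $\mathcal G_A$) is genuinely needed only in AKR's original formulation, where solutions are defined by adapted entropy inequalities rather than by a trace constraint. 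For $(ii)$, your plan --- Riemann solver supplied by completeness, Godunov-type approximation, compactness from uniform convexity, then maximality to upgrade dissipativity of the limit traces against every germ element into germ membership --- is the AKR existence proof in outline; it remains a sketch, and the two hard steps (interface entropy consistency of the scheme and passage of the trace condition to the limit) are precisely what this paper carries out in full in Theorem \ref{th:CVSCL} and the Appendix. It is worth observing that the paper runs the logic in the opposite direction: it never invokes the abstract implication that completeness yields existence, but instead proves convergence of its scheme for arbitrary admissible data, deduces existence (and hence completeness, Lemma \ref{lem::t4}) as a byproduct, and replaces your appeal to maximality by Lemma \ref{lem:generate}, which shows that dissipativity against the three-element set $\mathcal E_A$ already forces membership in $\mathcal G_A$ --- a finite, checkable substitute for the maximality argument you use.
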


In order to apply this result to \eqref{eq:SCL-strong}, it remains to show that the germ $\mathcal G_{A}$ defined in \eqref{eq:GAF0} is {$L^1$-dissipative maximal and complete.
\begin{pro}[$\mathcal G_{A}$ is $L^1$-dissipative, maximal and complete]\label{pro:propertiesGAF0}  Let $A\in [H_0,0]$. We recall that  $F_A(k_L,k_R) = \max\{ A, H^-_L(k_L), H^+_R(k_R) \}$. Then, the set $\mathcal G_A$ defined by
\begin{align}
   \mathcal G_{A}&= \left\{ (k_L,k_R)\in \R^2,\;H_R(k_R)=H_L(k_L)=\bar F_A(k_L,k_R) \right\}\nonumber\\
   &=\left\{\begin{array}{ll}
  & (k_L,k_R)\in \R^2,\;H_R(k_R)=H_L(k_L)\ge A \textrm{ and }\\
   &[\textrm{either } H_R(k_R)=A, \textrm{or } H_R(k_R)=H_R^-(k_R), \textrm{or } H_L(k_L)=H_L^+(k_L)] \end{array} \right\}\label{eq:406}
\end{align}
is a maximal and complete $L^1$-dissipative germ.
\end{pro}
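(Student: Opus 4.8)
Throughout write $s=H_L(k_L)=H_R(k_R)$ for the common flux of an element $(k_L,k_R)\in\mathcal G_A$; by \eqref{eq:406} one has $s\ge A$. I would first record the explicit geometry of $\mathcal G_A$. For each flux level $s\in[H_0,0]$ and $\alpha=L,R$ let $k_\alpha^-(s)\in[a_\alpha,b_\alpha]$ and $k_\alpha^+(s)\in[b_\alpha,c_\alpha]$ be the two solutions of $H_\alpha(\cdot)=s$ (on the decreasing, resp. increasing, branch); note $k_\alpha^-$ is nonincreasing and $k_\alpha^+$ nondecreasing in $s$. Using $H_L^+\le H_L$ with equality exactly on $[b_L,c_L]$, and $H_R^-\le H_R$ with equality exactly on $[a_R,b_R]$, the two descriptions in \eqref{eq:406} are readily seen to coincide, and they say: at level $s=A$ all four pairs $(k_L^\pm(A),k_R^\pm(A))$ belong to $\mathcal G_A$, whereas at level $s>A$ the only pair that is \emph{excluded} is $(k_L^-(s),k_R^+(s))$ (left state on the decreasing branch, right state on the increasing branch). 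The single algebraic fact driving everything is that, since the fluxes coincide, for $(k_L,k_R),(\hat k_L,\hat k_R)\in\mathcal G_A$ the quantities $q_\alpha:=\sign(k_\alpha-\hat k_\alpha)\,(H_\alpha(k_\alpha)-H_\alpha(\hat k_\alpha))$ satisfy $|q_L|=|q_R|=|s-\hat s|$.

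\textbf{$L^1$-dissipativity.} Because $|q_L|=|q_R|$, the inequality $q_L\ge q_R$ can only fail when $q_L<0<q_R$ and $s\ne\hat s$; I would rule this out. Assume $s>\hat s$ (the reverse case is symmetric). Then $q_R>0$ forces $\sign(k_R-\hat k_R)=\sign(s-\hat s)=+1$, and the monotonicity of $k_R^\pm$ shows $k_R>\hat k_R$ is possible only if $k_R=k_R^+(s)$ sits on the increasing branch; likewise $q_L<0$ forces $k_L<\hat k_L$, possible only if $k_L=k_L^-(s)$ sits on the decreasing branch. Thus the offending element would be $(k_L^-(s),k_R^+(s))$ with $s>\hat s\ge A$, i.e. $s>A$ — precisely the pair excluded from $\mathcal G_A$. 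This contradiction gives $q_L\ge q_R$, so $\mathcal G_A$ is $L^1$-dissipative.

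\textbf{Maximality.} Let $(\ell_L,\ell_R)$ satisfy the Rankine--Hugoniot condition with flux $s$ but lie outside $\mathcal G_A$; I must produce an element of $\mathcal G_A$ against which dissipativity fails. If $s>A$ then $(\ell_L,\ell_R)=(k_L^-(s),k_R^+(s))$, and comparing with the ``connection'' $(k_L^+(A),k_R^-(A))\in\mathcal G_A$ gives $q_L=-(s-A)<0<(s-A)=q_R$. If $s<A$ then, whatever branches $\ell_L,\ell_R$ lie on, comparing with the ``anti-connection'' $(k_L^-(A),k_R^+(A))\in\mathcal G_A$ (which belongs to $\mathcal G_A$ precisely because its level is $A$) yields $\sign(\ell_L-k_L^-(A))=+1>-1=\sign(\ell_R-k_R^+(A))$ and hence $q_L<q_R$ once more. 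Since every germ strictly containing $\mathcal G_A$ must contain such an $(\ell_L,\ell_R)$, no $L^1$-dissipative extension exists.

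\textbf{Completeness.} It remains to solve, for every $\hat k=(\hat k_L,\hat k_R)\in Q$, the Riemann problem \eqref{eq:SCL-strong} with $\rho_0=\hat k_L\ind_{\{x<0\}}+\hat k_R\ind_{\{x>0\}}$. I would build a self-similar solution in three pieces: a Lax--Oleinik wave fan of nonpositive speeds joining $\hat k_L$ at $x=-\infty$ to a trace $k_L^\ast$ at $0^-$, the stationary discontinuity $(k_L^\ast,k_R^\ast)$ at the junction, and a wave fan of nonnegative speeds joining $k_R^\ast$ at $0^+$ to $\hat k_R$ at $+\infty$. The junction state is selected by the effective flux: set $s^\ast=\bar F_A(\hat k_L,\hat k_R)=\max\{A,H_L^+(\hat k_L),H_R^-(\hat k_R)\}$ and take $k_L^\ast,k_R^\ast$ on the branches compatible with the signs of the fans. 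The two outer fans are entropy admissible by construction, so conditions~1--2 of Definition~\ref{def:SCLSolTrace} hold, and the crux is to check that $(k_L^\ast,k_R^\ast)\in\mathcal G_A$ (condition~3) and that the admissible trace sets of $\hat k_L$ and $\hat k_R$ indeed meet $\mathcal G_A$ at this pair — this is exactly where the monotone envelopes $H_L^+,H_R^-$ in \eqref{eq:F-A} are tailored to the reachable traces. I expect this last step, the explicit Riemann solver together with the verification that the Godunov junction state lies in $\mathcal G_A$ for \emph{all} data in $Q$, to be the main obstacle, the $L^1$-dissipativity and maximality being purely algebraic consequences of the ``three-out-of-four'' structure of the germ.
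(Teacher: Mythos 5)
Your proofs of $L^1$-dissipativity and of maximality are correct, and they are in essence the paper's own arguments. For dissipativity the paper proceeds by the contrapositive of your reasoning: assuming $k_L<\hat k_L$ with $s>\hat s$, it uses germ membership of $k$ to force $H_R(k_R)=H_R^-(k_R)$ and concludes $k_R<\hat k_R$; you instead show that a violation $q_L<0<q_R$ would make $k$ the excluded pair $(k_L^-(s),k_R^+(s))$ with $s>A$, contradicting membership. For maximality the paper tests the candidate against exactly your two elements: the anti-connection $(k_L^-(A),k_R^+(A))$ when $s<A$, and the connection $(k_L^+(A),k_R^-(A))$ when $s>A$ and both envelope conditions fail. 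Your branch functions $k_\alpha^\pm(s)$ and the identity $|q_L|=|q_R|=|s-\hat s|$ are a clean repackaging, nothing more. One small point worth a sentence in a final write-up: it is the hypothesis $A\ge H_0$ that guarantees, for $s>A$, that $s>\min H_\alpha$, so the two roots $k_\alpha^\pm(s)$ are distinct and strict branch membership (hence exclusion from the germ) is automatic; this is used silently in both your dissipativity and maximality cases, but it is not a gap.

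Completeness is where your proposal genuinely diverges from the paper, and where it has a real gap. You set up the natural self-similar Riemann solver (fan of nonpositive speeds, stationary junction discontinuity at flux $s^\ast=\bar F_A(\hat k_L,\hat k_R)$, fan of nonnegative speeds), but you explicitly leave unverified the two decisive points: that $s^\ast$ is attainable at $x=0^-$ and $x=0^+$ from $\hat k_L$ and $\hat k_R$ by entropy-admissible fans, and that the resulting pair of traces lies in $\mathcal G_A$. As written this is a program, not a proof. It can be completed: from $\hat k_L$ the attainable fluxes at $0^-$ by fans of nonpositive speed are exactly $[H_L^+(\hat k_L),0]$, with trace $k_L^-(s)$ when $s>H_L^+(\hat k_L)$ and trace $\hat k_L$ (or $b_L$) when $s=H_L^+(\hat k_L)$, and symmetrically on the right; then in each of the three cases $s^\ast=A$, $s^\ast=H_L^+(\hat k_L)$, $s^\ast=H_R^-(\hat k_R)$ one of the three alternatives in \eqref{eq:406} holds for the trace pair. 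None of this appears in your text. The paper avoids the construction entirely: completeness is proved in Lemma \ref{lem::t4} by invoking the convergence of the finite-volume scheme (Theorem \ref{th:CVSCL}), which produces a $\mathcal G_A$-entropy solution for arbitrary initial data with values in $Q$, in particular for Riemann data; there is no circularity, since that convergence proof uses only dissipativity and maximality (through the uniqueness part of Theorem \ref{th:eqSolEntropy}), not completeness. Once filled in, your route would yield a self-contained, scheme-free existence proof with an explicit solution; the paper's route gets completeness for free from machinery it develops anyway.
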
}
\begin{remark}
This Definition of the germ $\mathcal{G}_A$ is close to the definition of viscosity solution for Hamilton-Jacobi's equations.
One can also relate this germ to the classical flux limited notion of solution for scalar conservation law with applications to traffic (see \cite{ColomboGoatin2007} and \cite{AGS2010}).

This germ is also the unique maximal $L^1$-dissipative germ containing $(\bar p_L,\bar p_R)$ where $(\bar p_L,\bar p_R)$ is the unique couple such that $A=H_R^+(\bar p_R)=H_L^-(\bar p_L)$. This corresponds to the {so called} $(A,B)$-connection if one takes $(A,B) = (\bar p_L,\bar p_R)$ (see \cite{AdimurthiGowda2005}). {Notice also that contrarily to \cite{AdimurthiGowda2005} and \cite{andreianov}, we do not need any crossing condition to be satisfied}.\\
Finally, we can also link this definition with the monotone graph approach introduced in \cite{MonotoneGraphAndreianov}. If one takes $\Gamma_0 := \{ (p_L,p_R,F_0(p_L,p_R), F_0(p_L,p_R)), \; \; (p_L,p_R) \in \mathbb{R}^2 \}$ then the projected maximal monotone graph is $\Gamma = \{ (p_L,p_R,{\bar F_A(p_L,p_R), \bar F_A(p_L,p_R)}), \; \;  (p_L,p_R) \in \mathcal{G}_{A_{F_0}} \}$.
\end{remark}

\begin{proof}[Proof of Proposition \ref{pro:propertiesGAF0}]
We begin to prove that the germ is $L^1$-dissipative. Let $k=(k_L,k_R),\hat k = (\hat k_L, \hat k_R) \in \mathcal G_A$. We have to show that
\begin{equation}\label{eq:constistencyGerm}
\text{sgn}(k_L-\hat k_L) (H_L(k_L)-H_L(\hat k_L)) \geq \text{sgn}(k_R-\hat k_R) (H_R(k_R)-H_R(\hat k_R)).
\end{equation}
The result is obvious if $H_L(k_L)-H_L(\hat k_L)= H_R(k_R)-H_R(\hat k_R)=0$ or if $H_L(k_L)-H_L(\hat k_L)= H_R(k_R)-H_R(\hat k_R)>0$ and $k_L>\hat k_L$. Let us now assume to fix the ideas that $H_L(k_L)-H_L(\hat k_L)= H_R(k_R)-H_R(\hat k_R)>0$ and $k_L<\hat k_L$ (the case $H_L(k_L)-H_L(\hat k_L)= H_R(k_R)-H_R(\hat k_R)<0$ and $k_L>\hat k_L$ { is obtained exchanging $k$ and $\hat k$}). We need to check that $k_R<\hat k_R$. Note that
$$
H_L(k_L)>H_L(\hat k_L)\ge H_L^+(\hat k_L)\ge H_L^+(k_L).
$$
Since $H_R(k_R)=H_L(k_L)> H_L(\hat k_L)\ge A$, and since $k\in \mathcal G_A$,  we necessarily have  $H_R^-(k_R)=H_R(k_R)$. Therefore
$$
H_R^-(k_R)=H_R(k_R)>H_R(\hat k_R)\ge H_R^-(\hat k_R),
$$
which implies that $\hat k_R>k_R$. This proves \eqref{eq:constistencyGerm} and the $L^1$ dissipativity of $\mathcal G_A$.
\bigskip

To prove the maximality of $\mathcal G$, let us now fix some $k\in \R^2$ such that $H_L(k_L)=H_R(k_R)$ and assume that \eqref{eq:constistencyGerm} holds for any $\hat k\in \mathcal G_A$. We have to check that $k\in \mathcal G_A$. We first check that $H_L(k_L)=H_R(k_R)\ge A$.
By contradiction, assume that  $H_L(k_L)=H_R(k_R)< A$.
We take $\hat k$ such that $\hat k_L$ is the smallest element in $(H_L)^{-1}(\{ A\})$ and $\hat k_R$ the largest in $(H_R)^{-1}(A)$. Then $\hat k\in \mathcal G_A$,  $\hat k_L<k_L$, $\hat k_R>k_R$ and $H_L(k_L)< A= H_L(\hat k_L)$ {and similarly $H_R(k_R)< A= H_R(\hat k_R)$}, which contradicts \eqref{eq:constistencyGerm}. So $H_L(k_L)=H_R(k_R)\ge A$.\medskip

We now prove that
\begin{equation}\label{eq:either}
    H_L(k_L)=A  \textrm{ or } H_R(k_R)=H_R^-(k_R)  \textrm{ or } H_L(k_L)=H_L^+(k_L).
\end{equation}
By contradiction, assume that
$$
H_L(k_L)> A, \; H_L^+(k_L)<H_L(k_L)\;  \text{and}\; H_R^-(k_R)<H_R(k_R).
$$
Let us choose $\hat k\in \mathcal G_A$ such that $H_L(\hat k_L)= H_L^+(\hat k_L)=A$ and $H_R(\hat k_R)= H_R^-(\hat k_R)= A$. Then, as $H_L$ and $H_R$ are convex and as $H_L^+(k_L)<H_L(k_L)$ and $H_R^-(k_R)<H_R(k_R)$, we have
$$
H_L^+(k_L)=\min H_L\le  A = H_L^+(\hat k_L),
$$
which implies that $\hat k_L> k_L$ (equality cannot hold because $H_L(k_L)> {A=H_L(\hat k_L)}$) while
$$
H_R^-(k_R) =\min H_R \le  A = H_R^-(\hat k_R),
$$
which implies that $\hat k_R<k_R$ {(because $H_R(k_R)=H_L(k_L)> A=H_R(\hat k_R)$)}. This yields a contradiction with \eqref{eq:constistencyGerm}. Therefore $k$ satisfies \eqref{eq:either}  and belongs to $\mathcal G_A$. This shows the maximality of $\mathcal G_A$.\\
The proof of the completeness of the germ $\mathcal G_A$ is {postponed} to Lemma \ref{lem::t4}, where we show the existence of a solution using the convergence of the numerical scheme introduced in Subsection \ref{subsec:4.2}.
\end{proof}

\begin{remark}
In the case of a junction with $N\ge 3$ branches, it is possible to show that the Hamilton-Jacobi germ is never $L^1$-dissipative (except in the special case where the limiter $A=0$ which corresponds to no flux at the junction point). See Lemma \ref{lem::g5}.
\end{remark}

We now present an important result telling that the gem $\mathcal G_A$ is generated by a set of three points :  
\begin{equation}\label{eq:EA}
\mathcal E_A:=\{(a_L,a_R), (c_L,c_R), (\bar p_L^A, \bar p_R^A)\},
\end{equation}
where $(\bar p_L^A, \bar p_R^A)$ is such that
$$H_L(\bar p_L^A)=H_L^-(\bar p_L^A)=A=H_R^+(\bar p_R^A)=H_R(\bar p_R^A).$$
This fact was already mentioned in \cite{AdimurthiGowda2005}.
\begin{lemma}[$\mathcal E_A$ generates $\mathcal G_A$ {on  $Q$}]\label{lem:generate}
{Assume that $A\in [H_0,0]$. Then the set $\mathcal E_A$ generates $\mathcal G_A$ on the box $Q$}: namely, for any $(k_L,k_R)\in Q$,
$$
\Bigl( \; q_L(k_L, \bar k_L)-q_R(k_R,\bar k_R) \geq 0 \qquad \forall  (\bar k_L,\bar k_R)\in \mathcal E_A\;\Bigr) \qquad \Longrightarrow \qquad (k_L,k_R)\in \mathcal G_A,
$$
where, for $\alpha =L,R$, $q_\alpha$ are the entropy fluxes defined by
$$q_\alpha(q,p) = \mbox{sign}(q-p)(H_\alpha(q)-H_\alpha(p)).$$
\end{lemma}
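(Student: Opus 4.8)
\emph{Plan of proof.} The plan is to recover, one by one, the three conditions that characterize membership in $\mathcal G_A$ according to \eqref{eq:406}: the Rankine--Hugoniot identity $H_L(k_L)=H_R(k_R)$, the lower bound $H_L(k_L)\ge A$, and the alternative that either $H_R(k_R)=H_R^-(k_R)$ or $H_L(k_L)=H_L^+(k_L)$. Each point of $\mathcal E_A$ is itself in $\mathcal G_A$ (a quick check using $H_\alpha(a_\alpha)=H_\alpha(c_\alpha)=0$, $A\le 0$ and $A\ge H_0\ge \min H_\alpha$), so the three tested inequalities are certainly necessary; the whole content of the lemma is their sufficiency, and the three generators are chosen precisely so that the sign functions in the dissipation inequality decouple.

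First I would extract Rankine--Hugoniot by testing against the two endpoints. Since $a_\alpha\le k_\alpha\le c_\alpha$ and $H_\alpha(a_\alpha)=H_\alpha(c_\alpha)=0$, one has $q_\alpha(k_\alpha,a_\alpha)=H_\alpha(k_\alpha)$ and $q_\alpha(k_\alpha,c_\alpha)=-H_\alpha(k_\alpha)$; hence the inequality at $(a_L,a_R)$ reads $H_L(k_L)\ge H_R(k_R)$ while the one at $(c_L,c_R)$ reads $H_R(k_R)\ge H_L(k_L)$, and together they give $H_L(k_L)=H_R(k_R)=:h$.

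Both remaining conditions then come from the single inequality tested against $(\bar p_L^A,\bar p_R^A)$. Using $H_L(\bar p_L^A)=H_R(\bar p_R^A)=A$ and $H_L(k_L)=H_R(k_R)=h$, it collapses to
\[
(h-A)\bigl[\sign(k_L-\bar p_L^A)-\sign(k_R-\bar p_R^A)\bigr]\ge 0 .
\]
To get $h\ge A$ I would argue by contradiction: if $h<A$ then $k_L$ and $k_R$ lie strictly between the two roots of $H_L=A$ and of $H_R=A$ respectively; as $\bar p_L^A$ is the left root and $\bar p_R^A$ the right root, this forces $\sign(k_L-\bar p_L^A)=+1$ and $\sign(k_R-\bar p_R^A)=-1$, so the bracket is $2$ and the product is negative, a contradiction. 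For the alternative I may assume $h>A$ (if $h=A$ the condition $H_L(k_L)=A$ already holds). Supposing instead that both $H_L(k_L)\ne H_L^+(k_L)$ and $H_R(k_R)\ne H_R^-(k_R)$, the explicit form of the envelopes gives $k_L<b_L$ and $k_R>b_R$; monotonicity of $H_L$ on $[a_L,b_L]$ and of $H_R$ on $[b_R,c_R]$ together with $h>A$ then forces $k_L<\bar p_L^A$ and $k_R>\bar p_R^A$, so the bracket is $-2$ and the product is again negative, a contradiction. Thus $(k_L,k_R)$ meets all three requirements of \eqref{eq:406}, i.e.\ $(k_L,k_R)\in\mathcal G_A$.

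The one delicate point -- the main, if modest, obstacle -- is the sign bookkeeping in the last step: for each side one must correctly place $k_\alpha$ relative to $\bar p_\alpha^A$, which relies on identifying the monotone branch containing $k_\alpha$ and on the dictionary $H_L(k_L)=H_L^+(k_L)\Leftrightarrow k_L\ge b_L$ and $H_R(k_R)=H_R^-(k_R)\Leftrightarrow k_R\le b_R$ read off from the definition of the envelopes. The strict convexity $H_\alpha''\ge\delta$ of \eqref{ass:H} ensures that $\bar p_L^A$ and $\bar p_R^A$ are the unique roots on their respective branches, so every sign is unambiguous, after which the three reductions are immediate.
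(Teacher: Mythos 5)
Your proposal is correct and follows essentially the same route as the paper's proof: testing against $(a_L,a_R)$ and $(c_L,c_R)$ to extract the Rankine--Hugoniot identity, then testing against $(\bar p_L^A,\bar p_R^A)$ twice by contradiction to obtain $H_L(k_L)\ge A$ and the flux-limiter alternative, with the same sign placement of $k_\alpha$ relative to $\bar p_\alpha^A$ via the monotone branches. The only difference is cosmetic: you package the dissipation inequality in the factored form $(h-A)\bigl[\sign(k_L-\bar p_L^A)-\sign(k_R-\bar p_R^A)\bigr]\ge 0$, which the paper writes out termwise.
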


\begin{proof}
We choose $(k_L,k_R)\in Q$ and we will test it with the elements $(\bar k_L,\bar k_R)\in \mathcal E_A$
using the dissipation condition in order to show that $(k_L,k_R)\in {\mathcal G}_A$.\\
\noindent {\bf Step 1: recovering Rankine-Hugoniot condition}\\
We choose $(\bar k_L,\bar k_R)=(a_L,a_R)$. We then have
$$0\le q_L(k_L, \bar k_L)-q_R(k_R,\bar k_R)= \mbox{sign}(k_L-a_L)H_L(k_L)- \mbox{sign}(k_R-a_R)H_R(k_R).$$
Since $k_L\ge a_L$ and $k_R\ge a_R$, we recover that $H_L(k_L)\ge H_R(k_R)$. In the same way, taking $(\bar k_L,\bar k_R)=(c_L,c_R)$, we get
$H_L(k_L)\le H_R(k_R)$, which implies {that}
$$H_L(k_L)= H_R(k_R).$$\medskip

\noindent {\bf Step 2: $H_L(k_L)=H_R(k_R)\ge A$}\\
We choose $(\bar k_L,\bar k_R)=(\bar p_L^A, \bar p_R^A)$ and by contradiction, we assume that
$$H_L(k_L)=H_R(k_R)<A=H_L(\bar k_L)=H_L(\bar k_R).$$
Since
$${H_L^-(\bar k_L)=H_L(\bar k_L)}>H_L(k_L)\ge H_L^-(k_L),$$
we deduce that $\bar k_L<k_L$. In the same way, we get $\bar k_R>k_R$.
Using that
$$0\le q_L(k_L, \bar k_L)-q_R(k_R,\bar k_R)= \mbox{sign}(k_L-\bar k_L)(H_L(k_L)-A)- \mbox{sign}(k_R-\bar k_R)(H_R(k_R)-A)<0$$
we get a contradiction.\medskip

\noindent {\bf Step 3: $H_L(k_L)=H_R(k_R)={\bar F_A}(k_L,k_R)$}\\
We choose $(\bar k_L,\bar k_R)=(\bar p_L^A, \bar p_R^A)$ and by contradiction, we assume that
$${H_R(k_R)=}H_L(k_L)>A\quad{\rm and}\quad H_L(k_L)>H_L^+(k_L)\quad{\rm and}\quad H_R(k_R)>H_R^-(k_R).$$
Using that $H_L(k_L)=H_L^-(k_L)>A=H_L^-(\bar k_L)$, we deduce that $k_L<\bar k_L$. In the same way, we have $k_R>\bar k_R$.
This implies that
$$0\le q_L(k_L, \bar k_L)-q_R(k_R,\bar k_R)= \mbox{sign}(k_L-\bar k_L)(H_L(k_L)-A)- \mbox{sign}(k_R-\bar k_R)(H_R(k_R)-A)<0$$
which is a contradiction.
\end{proof}

\bigskip

\paragraph{General junction condition for SCL.} We now explain how the Scalar Conservation Law \eqref{eq:SCL} should be treated.
 Following the approach of \cite{CancesAndreianov2015}, the idea to understand this problem is to study two half-space problems for two given Dirichlet boundary condition $(k_L,k_R)$,

\begin{align}
&\left\lbrace \begin{matrix}
 \rho_t + H_L(\rho)_x = 0 &\textrm{ if } x < 0 \\
   \rho(t,0^-) = k_L(t) \\
 \rho(0,x)=\rho_0(x) &\textrm{ for } x < 0
\end{matrix}\right.
&\left\lbrace \begin{matrix}
 \rho_t + H_R(\rho)_x = 0 &\textrm{ if } x > 0 \\
   \rho(t,0^+) = k_R(t) \\
 \rho(0,x)=\rho_0(x) &\textrm{ for } x > 0
\end{matrix}\right.  \label{eq:SCL-weak}
\end{align}
where the couple of boundary conditions $(k_L(t),k_R(t))$ satisfies the following transmission condition
\begin{equation}\label{eq:TransmissionCondition}
H_L(\rho(t,0-))  = H_R(\rho(t,0+)) =F_0(k_L(t),k_R(t))  .
\end{equation}
Moreover, the Dirichlet boundary conditions {in (\ref{eq:SCL-weak})} have to be understood in the sense of Bardos-Leroux-Nedelec (see \cite{BLN}), i.e.
\begin{align*}\label{eq:BLNsense}
H_L(\rho(t,0-)) = g^{H_L}(\rho(t,0-),k_L(t)), \qquad  H_R(\rho(t,0+)) = g^{H_R}(k_R(t),\rho(t,0+)) \; \; \textrm{ for a.e. } t \in (0,T)
\end{align*}
where for a general Hamiltonian $H$, $g^H$ is the Godunov flux defined by
$$g^{H}(p_1,p_2)=\left\{\begin{array}{lll}
\min_{p\in [p_1,p_2]}H(p)&{\rm if}&p_1\le p_2\\
\max_{p\in [p_2,p_1]}H(p)&{\rm if}&p_2\le p_1.
\end{array}
\right.
$$

We say that a solution to \eqref{eq:SCL-weak}-\eqref{eq:TransmissionCondition} is a $F_0$-admissible solution to \eqref{eq:SCL}.\\
In our specific setting, due to the monotonicity of $F_0$, for any couple $(\rho^L,\rho^R) \in \mathbb{R}^2$ verifying $H_L(\rho^L) = H_R(\rho^R)$ there exists a unique value $F(\rho^L,\rho^R) \in \mathbb{R}$ such that there exists $(k_L,k_R) \in \mathbb{R}^2$ satisfying $F_0(k_L,k_R) = F(\rho^L,\rho^R)$ and \eqref{eq:TransmissionCondition} with $\rho(t,0^-) =\rho^L$ and $\rho(t,0^+) =\rho^R$ (see \cite{BCD2016} and \cite{CancesAndreianov2015}).
Moreover, one can show {(the reader can try to check it directly, but this result will be addressed in a much more generality in a future work)} that
$$F(\rho(t,0^-),\rho(t,0^+)) = \max(A_{F_0}, H_L^+(\rho(t,0^-)), H_R^-(\rho(t,0^+))),$$
where $A_{F_0}$ is constructed in Lemma \ref{lem:AF0} below.
Then, solving \eqref{eq:TransmissionCondition} rewrites as
$$H_R(\rho(t,0^-))=H_L(\rho(t,0^+))= {\bar F_{A_{F_0}}}(\rho(t,0^-),\rho(t,0^+)).$$
which is exactly the junction condition that $\rho$ must satisfy in \eqref{eq:SCL-strong}.\\
We then define  the solution of \eqref{eq:SCL} as follow.
\begin{definition}[Definition of solutions to \eqref{eq:SCL}]\label{defi:weakSCL}
We say that $\rho\in L^\infty((0,T)\times \R)$ is a $F_0$-admissible solution to \eqref{eq:SCL} if $\rho$ is a $\mathcal G_{A_{F_0}}$-entropy  solution to \eqref{eq:SCL-strong} with $A_{F_0}$ defined in Lemma \ref{lem:AF0}.
\end{definition}

\subsection{Construction of the flux limiter $A_{F_0}$}\label{subsec:2.1}
In this section, given a desired junction condition $F_0$ satisfying \eqref{ass:F0}, we want to define the relaxed junction condition such that the weak viscosity solution to \eqref{eq:HJ} satisfies the relaxed junction condition strongly. This junction condition is of the form
${\bar F_{A_{F_0}}}$ (see \eqref{eq:F-A}), where the constant $A_{F_0}$ depends on $F_0$ and is defined as the unique constant such that there exists $\bar p=(\bar p_l, \bar p_R)$ such that
$$A_{F_0}=F_0(\bar p)=H_R^+(\bar p_R)=H_L^-(\bar p_L).$$
More precisely, we have the following lemma (see also \cite[Lemma 2.13]{imbert-monneau}):
\begin{lemma}[Definition of the flux limiter $A_{F_0}$]\label{lem:AF0}
   Let $F_0$ and $H_\alpha$, $\alpha=L,R$ satisfy respectively \eqref{ass:F0} and \eqref{ass:H}. We denote by
   $$H_0:=\max_{\alpha=L,R}\min H_\alpha(p)=\max(H_L(b_L), H_R(b_R)),$$
where we recall that $b_\alpha$ is the  point of minimum of $H_\alpha$.

{Let $\bar b_R$ be the maximal $p$ such that $H_R(p)=H_0$, and $\bar b_L$ be the minimal $p$ such that $H_L(p)=H_0$.}
If $F_0(\bar b_L, \bar b_R)< H_0$, we set $A_{F_0}=H_0$.
If $F_0(\bar b_L, \bar b_R)\ge H_0$, then we define the set
   $$\Lambda:=\{\lambda\in \R, \exists \bar p=(\bar p_L,\bar p_R) \; {\rm s.t.}\; \lambda=F_0(\bar p)=H_R^+(\bar p_R)=H_L^-(\bar p_L)\}.$$
   Then $\Lambda $ is non-empty and is reduced to a singleton. We denote by $A_{F_0}$ the unique constant such that
   $$\Lambda=\{A_{F_0}\}.$$
   {Moreover, if $F_0=\bar F_A$ with $A\in [H_0,0]$, then $A_{F_0}=A$.}
\end{lemma}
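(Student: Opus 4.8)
The plan is to reduce the two–variable condition defining $\Lambda$ to the study of a single strictly monotone scalar function of the common level $\lambda$, and then to conclude by the intermediate value theorem together with the monotonicity of $F_0$.

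First I would parametrize the admissible pairs by their common value $\lambda := H_L^-(\bar p_L) = H_R^+(\bar p_R)$. Since $H_L^-$ is continuous, equal to $H_L$ and strictly decreasing on $[a_L,b_L]$ and constant equal to $\min H_L$ on $[b_L,c_L]$, for every $\lambda \in (H_0,0]$ there is a unique $\bar p_L(\lambda)\in[a_L,b_L)$ with $H_L^-(\bar p_L(\lambda))=\lambda$; symmetrically, because $H_R^+$ is continuous, constant equal to $\min H_R$ on $[a_R,b_R]$ and strictly increasing on $[b_R,c_R]$, there is a unique $\bar p_R(\lambda)\in(b_R,c_R]$ with $H_R^+(\bar p_R(\lambda))=\lambda$. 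These are exactly the levels for which both level sets are nonempty, since the ranges of $H_L^-$ and $H_R^+$ are $[\min H_L,0]$ and $[\min H_R,0]$, whose intersection is $[H_0,0]$; in particular $\Lambda\subset[H_0,0]$ automatically. Extending the maps to the endpoint by $\bar p_L(H_0):=\bar b_L$ and $\bar p_R(H_0):=\bar b_R$, I would check that $\lambda\mapsto(\bar p_L(\lambda),\bar p_R(\lambda))$ is continuous on $[H_0,0]$, with $\bar p_L$ non-increasing and $\bar p_R$ non-decreasing.

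Next I would introduce $g(\lambda):=F_0(\bar p_L(\lambda),\bar p_R(\lambda))-\lambda$ on $[H_0,0]$. Since $F_0$ is non-decreasing in its first argument (which is non-increasing in $\lambda$) and non-increasing in its second argument (which is non-decreasing in $\lambda$), the map $\lambda\mapsto F_0(\bar p_L(\lambda),\bar p_R(\lambda))$ is non-increasing, so $g$ is continuous and strictly decreasing. At the endpoints, $g(0)=F_0(a_L,c_R)\le F_0(a_L,a_R)=0$ by monotonicity in the second variable and $c_R\ge a_R$, while $g(H_0)=F_0(\bar b_L,\bar b_R)-H_0$. Strict monotonicity already gives that $\Lambda$ meets $(H_0,0]$ in at most one point, because for such $\lambda$ the pair $(\bar p_L(\lambda),\bar p_R(\lambda))$ is the \emph{unique} one realizing the level, hence $\lambda\in\Lambda$ iff $g(\lambda)=0$. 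In the case $F_0(\bar b_L,\bar b_R)\ge H_0$ one has $g(H_0)\ge 0\ge g(0)$, so continuity and strict monotonicity produce a unique zero $\lambda^\ast\in[H_0,0]$, which I would identify with $A_{F_0}$.

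The delicate point, and the main obstacle, is the endpoint $\lambda=H_0$, where one of the two envelopes is constant and the realizing abscissa is no longer unique, so membership $H_0\in\Lambda$ is not governed by $g(H_0)=0$ alone. Here I would argue directly: any $\bar p$ realizing $H_0\in\Lambda$ necessarily satisfies $\bar p_L\ge\bar b_L$ and $\bar p_R\le\bar b_R$ (because the equations $H_L^-(\bar p_L)=H_0$ and $H_R^+(\bar p_R)=H_0$ place $\bar p_L,\bar p_R$ on the relevant branches), so by the monotonicity of $F_0$ one gets $F_0(\bar p)\ge F_0(\bar b_L,\bar b_R)$. Combined with $F_0(\bar b_L,\bar b_R)\ge H_0$, this forces $F_0(\bar b_L,\bar b_R)=H_0$, i.e. $g(H_0)=0$; conversely, when $g(H_0)=0$ the pair $(\bar b_L,\bar b_R)$ realizes $H_0$. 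This reconciles the boundary with $g$ and shows that $\Lambda=\{\lambda^\ast\}$ is a singleton, with $\lambda^\ast=H_0$ precisely when $g(H_0)=0$. Finally, for $F_0=\bar F_A$ with $A\in[H_0,0]$, I would note $\bar F_A(\bar b_L,\bar b_R)\ge A\ge H_0$, so we are in the second case, and exhibit a realizing pair: taking $\bar p_L\in[a_L,b_L]$ with $H_L^-(\bar p_L)=A$ and $\bar p_R\in[b_R,c_R]$ with $H_R^+(\bar p_R)=A$, one computes $H_L^+(\bar p_L)=\min H_L\le A$ and $H_R^-(\bar p_R)=\min H_R\le A$, whence $\bar F_A(\bar p_L,\bar p_R)=\max(A,\min H_L,\min H_R)=A=H_L^-(\bar p_L)=H_R^+(\bar p_R)$. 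Thus $A\in\Lambda$, and by the uniqueness just established $\Lambda=\{A\}$, i.e. $A_{F_0}=A$.
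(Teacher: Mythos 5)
Your proof is correct, and its core mechanism is the same as the paper's: parametrize the admissible pairs by the common level $\lambda$ through the monotone envelopes, note that $\lambda\mapsto F_0(\bar p_L(\lambda),\bar p_R(\lambda))$ is non-increasing by the monotonicity assumption in \eqref{ass:F0}, and apply the intermediate value theorem to $g(\lambda)=F_0(\bar p_L(\lambda),\bar p_R(\lambda))-\lambda$ (the paper's function $K$) to get existence. The differences lie in the uniqueness step and in coverage. For uniqueness the paper argues directly on two arbitrary realizing pairs: if $\bar\lambda_1>\bar\lambda_2$ both lie in $\Lambda$, realized by $p^1$ and $p^2$, then the monotonicity of $H_L^-$ and $H_R^+$ forces $p^1_L<p^2_L$ and $p^1_R>p^2_R$, whence $F_0(p^1_L,p^1_R)\le F_0(p^2_L,p^2_R)$, a contradiction; this single comparison treats the endpoint $\lambda=H_0$ (where the realizing pair is not unique) with no extra work, so your separate reconciliation of ``$H_0\in\Lambda$ iff $g(H_0)=0$'' --- while correct --- is avoidable. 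In the other direction, your proposal goes beyond the paper in two respects: you prove the final assertion that $F_0=\bar F_A$ with $A\in[H_0,0]$ yields $A_{F_0}=A$, by exhibiting a realizing pair with $H_L^+(\bar p_L)=\min H_L$ and $H_R^-(\bar p_R)=\min H_R$ so that $\bar F_A(\bar p_L,\bar p_R)=A$, whereas the paper's written proof stops after the singleton property and never verifies this claim; and you repair a small imprecision in the existence step, since the paper asserts $K(\lambda)<0$ ``for $\lambda$ large enough'' although $\lambda$ cannot exceed $0$, while your endpoint computation $g(0)=F_0(a_L,c_R)\le F_0(a_L,a_R)=0$ closes the interval correctly.
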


\begin{proof}
{\bf Step 1: $\Lambda$ is non empty.}
Given $\lambda > H_0$, we define $p_\alpha^\lambda$ such that
$$H_L^-(p_L^\lambda)=H_R^+(p_R^\lambda)=\lambda.$$
{For $\lambda=H_0$, we set
$$p^{H_0}_\alpha:=\lim_{\lambda\to (H_0)^+} p_\alpha^\lambda$$
which satisfies $p_R^{H_0}=\bar  b_R$ and $p_L^{H_0}=\bar b_L$.}
In particular, {the map $\lambda\mapsto p_R^\lambda$} is continuous and increasing, while {the map $\lambda\mapsto p_L^\lambda$} is continuous and decreasing. Since $F_0$ is non-decreasing in the first variable and non-increasing in the second one, the map $\lambda\mapsto F_0(p_L^\lambda,p_R^\lambda)$ is non-increasing.

We then define the application $K:\lambda\mapsto F_0(p_L^\lambda,p_R^\lambda)-\lambda$.  {When $F_0(p_L^{H_0},p_R^{H_0})= F_0(\bar b_L, \bar b_R)\ge H_0$}, we get that
$K(H_0)\ge 0$. Using the fact of $\lambda\mapsto F_0(p_L^\lambda,p_R^\lambda)$ is non-increasing, we also have
$$K(\lambda)\le {F_0(\bar b_L, \bar b_R)}-\lambda$$
and so for $\lambda $ large enough, we have $K(\lambda)<0$. By continuity, this implies that there exists $\bar \lambda \ge H_0$ such that $K(\bar \lambda)=0$. We set $\bar p=(p_L^{\bar \lambda}, p_R^{\bar \lambda})$ and we get
$$F_0(\bar p)=\bar \lambda=H_L^-(\bar p_L)=H_R^+(\bar p_R),$$
i.e. ${\bar \lambda}\in \Lambda$.
\medskip

{\bf Step 2: $\Lambda$ is reduced to a singleton.}
Assume that there exists $\bar \lambda_1, \bar \lambda_2\in \Lambda$ such that $\bar \lambda_1>\bar \lambda_2$. Hence, there exists $p^i_R$ and $p^i_L$ such that
$$\bar \lambda_1=F_0(p^1_L,p^1_R)=H_L^-(p^1_L)=H_R^+(p_R^1)>\bar \lambda_2=F_0(p^2_L,p^2_R)=H_L^-(p^2_L)=H_R^+(p_R^2)$$
In particular,  we have $p^1_L<p^2_L$ and $p^1_R>p^2_R$. By monotonicity of $F_0$, this implies that
$$F_0(p^1_L,p^1_R)\le F_0(p^2_L, p^2_R),$$
which is a contradiction.\end{proof}

As explained before, the solution of \eqref{eq:HJ} is satisfied in a weak sense for general $F_0$. Nevertheless, it is possible to relax the junction condition  in order to make the solution satisfy the junction condition strongly. More precisely, we have the following theorem, given in \cite[Proposition 2.12]{imbert-monneau}.

\begin{theorem}[General junction conditions reduce to flux limited ones]\label{th:equivHJ}
   Assume that $H_L$ and $H_R$ satisfy \eqref{ass:H} and that $F_0$ satisfies \eqref{ass:F0}. Then $u$ is a continuous weak viscosity solution to \eqref{eq:HJ}, if and only if $u$ is a strong viscosity solution to \eqref{eq:HJstrong} with  {$\bar F_A$ for $A:=A_{F_0}$} defined above in Lemma~\ref{lem:AF0}.
\end{theorem}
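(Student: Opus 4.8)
The plan is to prove the equivalence at the junction point $x=0$ only, since for $x_0\neq 0$ the sub- and supersolution inequalities in \eqref{eq:HJ} and \eqref{eq:HJstrong} are literally identical (both problems carry the same $H_L$ on $\{x<0\}$ and the same $H_R$ on $\{x>0\}$). Everything therefore reduces to comparing, at a contact point $(t_0,0)$, the junction inequalities imposed on the triple $(\lambda,p_L,p_R):=(\varphi_t(t_0,0),\varphi_x(t_0,0^-),\varphi_x(t_0,0^+))$. Rewriting the conditions of Definition \ref{defi:weakHJ} as $\lambda+\min(H_L(p_L),H_R(p_R),F_0(p_L,p_R))\le 0$ for subsolutions and $\lambda+\max(H_L(p_L),H_R(p_R),F_0(p_L,p_R))\ge 0$ for supersolutions, and the strong conditions of Definition \ref{defi:strongHJ} with $\bar F_{A_{F_0}}$ as $\lambda+\bar F_{A_{F_0}}(p_L,p_R)\lessgtr 0$, the theorem becomes a pointwise identity between two ways of constraining $(\lambda,p_L,p_R)$, to be established separately for sub- and supersolutions.

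The engine of the proof is the classical monotone modification of test functions: for a subsolution one may add to $\varphi$ on $\{x\le 0\}$ a nonnegative corrector vanishing at the origin, which permits one to freely decrease $p_L$ and, symmetrically, to increase $p_R$; for a supersolution the admissible moves are reversed. Combined with the interior viscosity inequalities and a boundary-penetration argument (sliding the contact point slightly into each branch and passing to the limit, exploiting $H_\alpha''\ge\delta$ from \eqref{ass:H}), this produces, for subsolutions, the one-sided constraints $\lambda+H_L^+(p_L)\le 0$ and $\lambda+H_R^-(p_R)\le 0$ at every junction contact, the asymmetric pairing of $H_L^+$ with $H_R^-$ reflecting the opposite orientations of the two branches relative to the junction. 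Hence the only extra content of the strong condition $\lambda+\bar F_{A_{F_0}}(p_L,p_R)\le 0$ is the scalar inequality $\lambda+A_{F_0}\le 0$, which by the same reduction need only be checked on the critical test function whose slopes satisfy $H_L^+=H_R^-=A_{F_0}$. For supersolutions one argues symmetrically, the reduction now concentrating on the complementary critical slopes $\bar p=(\bar p_L,\bar p_R)$ of Lemma \ref{lem:AF0}, at which $H_L^-(\bar p_L)=H_R^+(\bar p_R)=A_{F_0}$.

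It then remains to match this reduced flux-limited condition with the weak $F_0$-condition, and here Lemma \ref{lem:AF0} is decisive. By construction $A_{F_0}=F_0(\bar p)=H_L^-(\bar p_L)=H_R^+(\bar p_R)$, and the monotonicity of $F_0$ in \eqref{ass:F0}, together with the monotonicity of the maps $\lambda\mapsto p_L^\lambda$ and $\lambda\mapsto p_R^\lambda$ established in the proof of Lemma \ref{lem:AF0}, lets one compare $F_0(p_L,p_R)$ with $\max(A_{F_0},H_L^+(p_L),H_R^-(p_R))$ along the admissible cone of slopes. On the reduced test set the value of $F_0$ is pinned to $A_{F_0}$, so the disjunction $\lambda+\min(H_L(p_L),H_R(p_R),F_0(p_L,p_R))\le 0$ defining the weak subsolution condition collapses exactly to $\lambda+A_{F_0}\le 0$; conversely, whenever $\bar F_{A_{F_0}}(p_L,p_R)>A_{F_0}$ the required inequality is already furnished by the interior equations, so the $F_0$-branch of the disjunction is never binding. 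The supersolution equivalence follows along the same lines with all inequalities and envelopes reversed.

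The main obstacle is precisely the boundary-penetration step that extracts the envelope constraints $\lambda+H_\alpha^\pm(p_\alpha)\lessgtr 0$ from the interior viscosity inequalities, together with the verification that the flux-limited condition genuinely collapses to a single critical test function; this is the delicate core of the Imbert--Monneau machinery. The fact that $F_0$ is only Lipschitz and piecewise $C^1$ (assumption \eqref{ass:F0}) is exactly why one tests against the smooth critical slopes rather than against $F_0$ itself, and the semi-coercivity in \eqref{ass:F0} is what excludes the degenerate regime and guarantees that $A_{F_0}$ is well defined (Lemma \ref{lem:AF0}). Since the statement coincides with \cite[Proposition 2.12]{imbert-monneau}, one may alternatively invoke it verbatim once the identification $A=A_{F_0}$ has been made through Lemma \ref{lem:AF0}.
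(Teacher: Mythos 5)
The paper itself gives no self-contained proof of Theorem \ref{th:equivHJ}: it is stated as "given in \cite[Proposition 2.12]{imbert-monneau}", after the identification $A=A_{F_0}$ supplied by Lemma \ref{lem:AF0}. So your closing sentence --- invoke that proposition verbatim once $A_{F_0}$ is constructed --- is exactly the paper's argument, and as a citation-based proof it is fine.

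Your attempted self-contained sketch, however, has a genuine error in the subsolution half, located in the reduction to critical test functions. The reduced-test-function theorem of Imbert--Monneau uses, for subsolutions \emph{and} supersolutions alike, junction test functions whose slopes are $(\bar p_L,\bar p_R)$ with $H_L^-(\bar p_L)=H_R^+(\bar p_R)=A_{F_0}$ (in this paper's orientation) --- precisely the point furnished by Lemma \ref{lem:AF0}, where in addition $F_0(\bar p_L,\bar p_R)=H_L(\bar p_L)=H_R(\bar p_R)=A_{F_0}$, so that \emph{all three} disjuncts of the weak condition collapse to $\lambda+A_{F_0}\le 0$; this coincidence is what makes the weak-to-strong matching work. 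You instead assign to subsolutions the slopes $(q_L,q_R)$ with $H_L^+(q_L)=H_R^-(q_R)=A_{F_0}$, hence $q_L\ge b_L$ and $q_R\le b_R$. Two things then go wrong. First, $F_0$ is not "pinned to $A_{F_0}$" at $(q_L,q_R)$: monotonicity of $F_0$ only gives $F_0(q_L,q_R)\ge A_{F_0}$. Second, and fatally, the reduction with these slopes is false, because test functions with such slopes need not touch the candidate from above at the junction at all, so your reduced condition can hold vacuously while the strong condition fails. Concretely, if $A_{F_0}>H_0$, the function $v(t,x)=b_Lx\ind_{\{x<0\}}+b_Rx\ind_{\{x>0\}}-H_0t$ is a branch subsolution satisfying your "automatic" constraints $\lambda+H_L^+(p_L)\le0$ and $\lambda+H_R^-(p_R)\le0$ at every junction contact; no test function with left slope $q_L>b_L$ can touch $v$ from above at $x=0$ (touching forces left slope $\le b_L$), so your reduced condition is vacuously true; yet $v_t+\bar F_{A_{F_0}}(b_L,b_R)=A_{F_0}-H_0>0$, so $v$ is not a strong $\bar F_{A_{F_0}}$-subsolution. (This $v$ is not a weak $F_0$-subsolution either, so the theorem survives --- but your route to it does not.) Relatedly, the reduction cannot follow "by the same reduction" from monotone modifications of test functions: for subsolutions those only decrease $p_L$ and increase $p_R$, i.e. move slopes \emph{away} from the critical point; the reduced-test-function theorem is a substantive, separately proved result in Imbert--Monneau. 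A last small point: your supersolution matching uses $A_{F_0}=F_0(\bar p)$, which Lemma \ref{lem:AF0} provides only in the non-degenerate case $F_0(\bar b_L,\bar b_R)\ge H_0$; the case $A_{F_0}=H_0$ requires a separate argument.
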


\section{Numerical schemes}\label{sec:4}
\subsection{Numerical scheme for the Hamilton-Jacobi equation \eqref{eq:HJ}}\label{subsec:4.1}
In this subsection, we describe the numerical scheme used to solve the Hamilton-Jacobi equation \eqref{eq:HJ}. Given a time step $\Delta t>0$ and a space step $\Delta x>0$, we consider the discrete time $t_n=n\Delta t$ for $n\in \mathbb N$ and the discrete point $x_{j}=j\Delta x$ for $j\in \mathbb Z$. We denote by $u^n_j$ the numerical approximation of $u(t_n,x_j)$. In order to discretize \eqref{eq:HJ}, we will use a Godunov approximation. More precisely, we introduce the following Godunov numerical Hamiltonians, for $\alpha=L,R$
$$g^{H_\alpha}(p^-,p^+)=\left\{\begin{array}{lll}
\min_{p\in [p^-,p^+]}H_\alpha(p)&{\rm if}&p^-\le p^+\\
\max_{p\in [p^+,p^-]}H_\alpha(p)&{\rm if}&p^+\le p^-
\end{array}
\right.
$$
We remark that  $g^{H_\alpha}$ are  non-decreasing in the first variable and non-increasing in the second one. Moreover, $g^{H_\alpha}(p,p)=H_\alpha(p)$ for $\alpha=R,L$. For $j\in \mathbb Z$, we define
$$p^n_{j+\frac 12}=\frac{u^n_{j+1}-u^n_{j}}{\Delta x}.$$
The numerical scheme is then given by
\begin{equation}\label{eq:scheme-HJ}
\left\{\begin{array}{ll}
\displaystyle{\frac {u^{n+1}_{j}-u^n_{j}}{\Delta t}+g^{H_L}  \left( p^n_{j-\frac 12},p^n_{j+\frac 12} \right)=0}&{\rm for}\; j\le -1,\\
\\
\displaystyle{\frac {u^{n+1}_{j}-u^n_{j}}{\Delta t}+g^{H_R}  \left( p^n_{j-\frac 12},p^n_{j+\frac 12} \right)=0}&{\rm for}\; j\ge 1,\\
\\
\displaystyle{\frac {u^{n+1}_{j}-u^n_{j}}{\Delta t}+F_0  \left(p^n_{j-\frac 12}, p^n_{j+\frac 12} \right)=0}&{\rm for}\; j=0\\
\end{array}
\right.
\end{equation}
completed with the initial condition
$$u^0_j=u_0(j\Delta x)\quad {\rm for}\; j\in \mathbb Z.$$
{For $\Delta=(\Delta t,\Delta x)$, let
$$u_{\Delta}(t,x) := \sum_{n \in \mathbb{N}} \ind_{[t_n, t_{n+1})}(t) \ind_{[x_j, x_{j+1})}(x) \left[ u_j^n + \frac{u_j^{n+1} - u_j^n}{\Delta x} (x-x_n) \right].$$}
We then have the following convergence result.
\begin{theorem}[Numerical approximation for Hamilton-Jacobi equations]\label{th:CVHJ}
Let $T>0$ and $u_0$ be Lipschitz continuous. We assume that the $H_\alpha$ satisfy \eqref{ass:H} and $F_0$ satisfies \eqref{ass:F0} or is of the form \eqref{eq:F-A}. Let $u^n_{j}$ be the solution of the scheme \eqref{eq:scheme-HJ} and $u$ be the solution of the Hamilton-Jacobi equation \eqref{eq:HJstrong} with the relaxed junction condition $F_{A_{F_0}}$.
 Let $L_{\mathcal{H}} := \max( L_{H_L}, L_{H_R}, ||\partial_{p_1} F_0||_\infty, ||\partial_{p_2} F_0||_\infty  )$ where $L_{H_\alpha}$ is the Lipschitz constant of $H_\alpha$. We also assume that the CFL condition
    \begin{equation} \label{eq:CFLSCL}
    \frac{\Delta x}{\Delta t} \geq 2  L_{\mathcal{H}}
    \end{equation}
holds. Then {$u_\Delta$} converges locally uniformly to $u$.
\end{theorem}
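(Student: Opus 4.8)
The plan is to follow the Barles--Souganidis strategy for monotone schemes, adapted to the junction: first establish that the scheme \eqref{eq:scheme-HJ} is monotone and enjoys a discrete comparison principle under the CFL condition \eqref{eq:CFLSCL}, then derive uniform Lipschitz estimates in space and time to obtain compactness, and finally identify the half-relaxed limits as the weak viscosity sub- and supersolutions of \eqref{eq:HJ}, concluding by the comparison principle of \cite{imbert-monneau}. Since by Theorem \ref{th:equivHJ} the weak $F_0$-solution coincides with the strong solution of \eqref{eq:HJstrong} associated with $\bar F_{A_{F_0}}$, this will yield convergence to the announced limit.

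First I would write the scheme as $u^{n+1}_j = S_j(u^n)$ with $S_j(u^n) = u^n_j - \Delta t\, g(p^n_{j-1/2}, p^n_{j+1/2})$, where $g$ equals $g^{H_L}$, $g^{H_R}$ or $F_0$ according to the sign of $j$. Differentiating with respect to the neighbouring values and using that $g^{H_\alpha}$ and $F_0$ are non-decreasing in the first slot and non-increasing in the second (monotonicity of the Godunov fluxes and \eqref{ass:F0}), every off-diagonal derivative is non-negative, while the diagonal one equals $1 - \tfrac{\Delta t}{\Delta x}(\partial_1 g - \partial_2 g) \ge 1 - \tfrac{\Delta t}{\Delta x}\,2L_{\mathcal H} \ge 0$ precisely under \eqref{eq:CFLSCL}. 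Hence $S$ is monotone and commutes with the addition of constants, which yields the discrete comparison principle and the $\ell^\infty$ contraction $\|u^{n+1}-u^n\|_\infty \le \|u^n - u^{n-1}\|_\infty$.

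Next come the a priori bounds. The normalisations $H_\alpha(a_\alpha)=H_\alpha(c_\alpha)=0$ and $F_0(a_L,a_R)=F_0(c_L,c_R)=0$ make the affine profiles $x\mapsto a_L x\ind_{\{x<0\}}+a_R x\ind_{\{x>0\}}$ and $x\mapsto c_L x\ind_{\{x<0\}}+c_R x\ind_{\{x>0\}}$ stationary for \eqref{eq:scheme-HJ}. Using their translates as barriers in the comparison principle, together with \eqref{eq:u0}, I would propagate the bound $p^n_{j+1/2}\in[a_L,c_L]$ for $j\le -1$ and $p^n_{j+1/2}\in[a_R,c_R]$ for $j\ge 0$, i.e.\ the discrete gradient stays in the box $Q$; this is the uniform space-Lipschitz bound. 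The contraction of the previous step together with the initialisation $|u^1_j-u^0_j|\le C\Delta t$ (itself a consequence of the boundedness of $g$ on $Q$) gives the uniform time-Lipschitz bound $|u^{n+1}_j-u^n_j|\le C\Delta t$. By Arzel\`a--Ascoli, $u_\Delta$ is then relatively compact for the local uniform topology.

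It remains to identify any limit. Setting $\bar u=\limsup{}^* u_\Delta$ and $\underline u=\liminf{}_* u_\Delta$, the standard consistency argument for monotone schemes shows that away from $x=0$ they are respectively a viscosity sub- and supersolution of the two bulk equations. At the junction, the delicate point --- which I expect to be the main obstacle --- is to show that $\bar u$ (resp.\ $\underline u$) satisfies the weak junction inequalities of Definition \ref{defi:weakHJ}: here one must exploit the three-fold alternative in that definition, passing to the limit in the node $j=0$ (which carries $F_0$) and in its neighbours (which carry $g^{H_L}, g^{H_R}$), while controlling the one-sided discrete gradients $p^n_{-1/2}, p^n_{1/2}$; the monotonicity of the scheme and the relation between the Godunov fluxes and the envelopes $H^\pm_\alpha$ are what make the correct alternative survive in the limit. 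Once $\bar u$ and $\underline u$ are known to be a weak subsolution and a weak supersolution, the comparison principle of \cite{imbert-monneau} gives $\bar u\le \underline u$; as $\underline u\le \bar u$ always holds, the two half-relaxed limits coincide with the unique weak $F_0$-solution, which forces local uniform convergence of the whole family. Finally Theorem \ref{th:equivHJ} identifies this limit with the strong solution of \eqref{eq:HJstrong} associated with $\bar F_{A_{F_0}}$.
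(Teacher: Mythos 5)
Your proposal takes a genuinely different route from the paper. The paper does not run a Barles--Souganidis argument at all: it invokes Theorem \ref{th:equivHJ} to identify the limit, folds the junction into two outgoing branches via the change of variables $v^L(t,x)=u(t,-x)$, $v^R(t,x)=u(t,x)$, and then proves Lemma \ref{lem:scheme} (for convex $H$, $g^H(p_1,p_2)=\max(H^+(p_1),H^-(p_2))$), which shows that the Godunov scheme \eqref{eq:scheme-HJ} is \emph{identical} to the scheme analysed by Guerand--Koumaiha, so that the convergence is exactly \cite[Theorem 1.1 or 1.2]{guerand}. Your direct approach is viable in principle---it is essentially how the cited result is itself proved---but as written it has a genuine gap: the junction-consistency step, which you explicitly defer (``the delicate point --- which I expect to be the main obstacle''). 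Showing that $\bar u=\limsup{}^{*}u_\Delta$ satisfies the three-fold subsolution alternative of Definition \ref{defi:weakHJ} at $x_0=0$ is not a routine adaptation of the monotone-scheme consistency argument: when the test function touches at the node, the scheme couples the two one-sided discrete gradients $p^n_{-1/2}$, $p^n_{1/2}$ through $F_0$, and the inequality one extracts involves these gradients rather than $\varphi_x(t_0,0^\pm)$; controlling them is precisely the interplay between the Godunov fluxes and the envelopes $H_\alpha^{\pm}$ that you invoke without proof. In addition, the relaxation and comparison results of \cite{imbert-monneau} that you want to apply to $\bar u$ and $\underline u$ require a weak-continuity property of the subsolution at the junction, which must itself be verified for the half-relaxed limit. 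Since this identification step is the entire mathematical content of the theorem, deferring it leaves the proof incomplete.

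A secondary flaw concerns your space-Lipschitz bound: ``using translates [of the stationary affine profiles] as barriers in the comparison principle'' does not work as stated, because the scheme is \emph{not} invariant under spatial translations (a shift by $\Delta x$ moves the junction node), so comparison with a translated stationary profile bounds the values $u^n_j$ but not the increments $u^n_{j+1}-u^n_j$; on the whole line the gradient bound follows from comparison plus translation invariance, and it is exactly this mechanism that breaks on a junction. The correct argument---the one the paper uses in Lemma \ref{lem:PBorne}---is to observe that the discrete gradients $p^n_{j+1/2}$ themselves solve the monotone SCL scheme \eqref{eq:scheme-SCL}, for which the profiles $a_L\ind_{j\le -1}+a_R\ind_{j\ge 0}$ and $c_L\ind_{j\le -1}+c_R\ind_{j\ge 0}$ are fixed points (this is where $F_0(a_L,a_R)=F_0(c_L,c_R)=0$ enters), and to propagate the box bound by induction using the monotonicity of that scheme. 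With this repair your a priori estimates and compactness go through, but the junction-identification gap above remains.
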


\begin{proof}
Recalling that by Theorem \ref{th:equivHJ}, the solution to \eqref{eq:HJstrong} with $A=A_{F_0}$ is also the solution to \eqref{eq:HJ}, the proof is a  consequence of \cite[Theorem 1.1 or Theorem 1.2]{guerand} remarking that the two schemes are identical. The main difference with the result in \cite{guerand} is that in that paper, the network is composed of two outgoing edges, but it's rather easy to come back to this setting. Indeed, if we set, for $x\ge 0$,
$$v^\alpha (t,x)=\left\{\begin{array}{lll}
u(t,-x)&{\rm if} &\alpha=L\\
u(t,x)&{\rm if}&\alpha=R
\end{array}
\right.$$
then $v^\alpha$ is solution of
\begin{equation}\label{eq:HJ2}
\left\{\begin{array}{lll}
  v^\alpha_t+\tilde H_\alpha (v_x)=0  &{\rm in}&(0,T)\times (0,+\infty), \; \alpha=R,L  \\
   v^\alpha_t+\tilde F_0(v^L_x,v^R_x)=0 & {\rm in}&(0,T)\times \{0\}
\end{array}\right.
\end{equation}
where $\tilde H_L(p)=H_L(-p)$, $\tilde H_R=H_R$, $\tilde F_0(p_1,p_2)=F_0(-p_1,p_2)$.
Setting $v^{L,n}_j=u^n_{-j}$ and $v^{R,n}_j=u^n_{j}$ for $j\ge 0$, an easy computation, using that $g^{\tilde H_L}(p_1,p_2)={g^{H_L}(-p_2,-p_1)}$, shows that $v^{\alpha,n}_j$ is solution of the following scheme
$$\left\{\begin{array}{ll}
\displaystyle{\frac {v^{\alpha,n+1}_{j}-v^{\alpha,n}_{j}}{\Delta t}+g^{\tilde H_\alpha}  \left( p^{\alpha,n}_{j-\frac 12},p^{\alpha,n}_{j+\frac 12} \right)=0}&{\rm for}\; j\ge 1,\;\quad  \alpha=L,R\\
\\
\displaystyle{\frac {v^{\alpha, n+1}_{j}-v^{\alpha,n}_{j}}{\Delta t}+\tilde F_0  \left(p^{L,n}_{j+\frac 12}, p^{R,n}_{j+\frac 12} \right)=0}&{\rm for}\; j=0,\;\quad  \alpha=L,R,\quad\mbox{with}\quad   v_0^{L,n}=v_0^{R,n}\\
\end{array}
\right.
$$
where
\begin{equation}\label{eq:DefpDelta}
   p^{\alpha,n}_{j+\frac 12}=\frac{v^{\alpha,n}_{j+1}-v^{\alpha,n}_{j}}{\Delta x}.
\end{equation}

On the contrary, the scheme proposed in \cite{guerand} to solve \eqref{eq:HJ2} writes
\begin{equation}
\left\{\begin{array}{ll}
\displaystyle{\frac {v^{\alpha,n+1}_{j}-v^{\alpha,n}_{j}}{\Delta t}+\max  \left( \tilde H^+_\alpha\left( p^{\alpha,n}_{j-\frac 12} \right),\tilde H^-_\alpha\left(p^{\alpha,n}_{j+\frac 12}  \right)\right)=0}&{\rm for}\; j\in \mathbb N,\quad \alpha=1,2, \\
\\
\displaystyle{\frac {v^{\alpha,n+1}_{j}-v^{\alpha,n}_{j}}{\Delta t}+\tilde F_0  \left(p^{L,n}_{j+\frac 12}, p^{R,n}_{j+\frac 12} \right)=0}&{\rm for}\;  j=0,\quad \alpha=1,2\; \quad \mbox{with}\quad v_0^{1,b}=v_0^{2,n}.\\
\end{array}
\right.
\end{equation}
The rest of the proof is then a direct consequence of the following lemma which proof is postponed.
\begin{lemma}[Equivalent formulation of the Godunov flux]\label{lem:scheme}
For a general convex hamiltonian $H$, we have
$$g^H(p_1,p_2)=\max(H^+(p_1),H^-(p_2)).$$
\end{lemma}
This shows that the two schemes for \eqref{eq:HJ2} are equivalent and so, using \cite[Theorem 1.1 or Theorem 1.2]{guerand}, this shows that {"$v^{\alpha,n}_i$ converges to $v^\alpha$" locally uniformly and so, by a change of variable, "$u^n_i$ converges to $u$" in the sense of Theorem \ref{th:CVHJ}.}
This ends the proof of the theorem.
\end{proof}
It remains to show the lemma.
\begin{proof}[Proof of Lemma \ref{lem:scheme}]
We denote by $p_0$ the minimum point of $H$ so that $H$ is non-increasing on {$(-\infty,p_0]$} and non-decreasing on $[p_0,+\infty)$ and we distinguish several cases:\newline
\noindent{\bf Case 1: $p_1\le p_0\le p_2$}. In that case $H^+(p_1)= H(p_0)=H^-(p_2)$ and
$$g^{H}(p_1,p_2)=\min_{[p_1,p_2]}H=H(p_0)=\max(H^+(p_1),H^-(p_2)).$$
\noindent{\bf Case 2: $p_0\le p_1\le p_2$}. In that case $H^+(p_1)=H(p_1)$,  $H^-(p_2)=H(p_0)$ and
$$g^{H}(p_1,p_2)=\min_{[p_1,p_2]}H=H(p_1)=\max(H^+(p_1),H^-(p_2)).$$
\noindent{\bf Case 3: $p_1\le p_2\le p_0$}. This case is similar to the previous one.\medskip

\noindent{\bf Case 4: $p_2\le p_0\le p_1$}. In that case $H^+(p_1)= H(p_1)$, $H^-(p_2)=H(p_2)$ and
$$\max(H(p_1),H(p_2))=\max_{[p_2,p_1]}H=g^{H}(p_1,p_2).$$
\noindent{\bf Case 5: $p_0\le p_2\le p_1$}. In that case $H^+(p_1)= H(p_1)$, $H^-(p_2)=H(p_0)$ and
$$\max(H(p_1),H(p_0))=H(p_1)=\max_{[p_2,p_1]}H=g^{H}(p_1,p_2).$$
\noindent{\bf Case 6: $p_2\le p_1\le p_0$}. This case is similar to the previous one.
\medskip
\end{proof}

\subsection{Numerical scheme for the scalar conservation law equation \eqref{eq:SCL}}\label{subsec:4.2}

Given $u_0$ satisfying \eqref{eq:u0}, we consider $\rho_0 :=  (u_0)_x$ and its discretized version
$$p^0_{j+1/2}= \frac{u_{j+1}^0 - u_j^0}{\Delta x} = \frac{u_0(x_{j+1}) - u_0(x_{j}) }{\Delta x} = \frac{1}{\Delta x}\int_{x_{j}}^{x_{j+1}} \rho_0(y) \d y.$$
We now want to describe the numerical scheme for \eqref{eq:SCL}. This scheme is directly derived from the scheme \eqref{eq:scheme-HJ}. Indeed, recalling the definition of $p^n_{j+1/2}$ in \eqref{eq:DefpDelta}, we can write
\begin{equation}\label{eq:scheme-SCL}
\left\{\begin{array}{ll}
\displaystyle{p^{n+1}_{j+\frac 12}=p^n_{j+\frac 12}-\frac{\Delta t}{\Delta x}
\left(g^{H_L}\left(p^n_{j+\frac 12},p^n_{j+\frac 32}\right)-g^{H_L}\left(p^n_{j-\frac 12}, p^n_{j+\frac 12}\right)\right)}&{\rm for}\; j\le-2\\
\\
\displaystyle{p^{n+1}_{j+\frac 12}=p^n_{j+\frac 12}-\frac{\Delta t}{\Delta x}
\left(g^{H_R}\left(p^n_{j+\frac 12},p^n_{j+\frac 32}\right)-g^{H_R}\left(p^n_{j-\frac 12}, p^n_{j+\frac 12}\right)\right)}&{\rm for}\; j\ge 1\\
\\
\displaystyle{p^{n+1}_{j+\frac 12}=p^n_{j+\frac 12}-\frac{\Delta t}{\Delta x}
\left(g^{H_R}\left(p^n_{j+\frac 12},p^n_{j+\frac 32}\right)-F_0\left(p^n_{j-\frac 12}, p^n_{j+\frac 12}\right)\right)}&{\rm for}\; j=0\\
\\
\displaystyle{p^{n+1}_{j+\frac 12}=p^n_{j+\frac 12}-\frac{\Delta t}{\Delta x}
\left(F_0\left(p^n_{j+\frac 12},p^n_{j+\frac 32}\right)-g^{H_L}\left(p^n_{j-\frac 12}, p^n_{j+\frac 12}\right)\right)}&{\rm for}\; j=- 1.\\
\end{array}
\right.
\end{equation}
For notations' sake, we also denote by $\mathcal{F}^n_j$ the right-hand side of the above scheme such that for any $n,j$, we have
$$\rho^{n+1}_{j+1/2} = \mathcal{F}^n_j(\rho^n_{j-1/2},\rho^n_{j+1/2},\rho^n_{j+3/2}).$$
We denote $\Delta = (\Delta x, \Delta t)$ and
\begin{equation}\label{eq:300}
   p_{\Delta} := \sum_{n\in \mathbb{N}} \sum_{j\in\mathbb{Z}} p_{j+1/2}^n \ind_{[t_n, t_{n+1})\times [x_j, x_{j+1})} .
\end{equation}

For this scheme we have the following convergence result
\begin{theorem}[Numerical approximation for SCL]\label{th:CVSCL}
 Let $u_0$ satisfy \eqref{eq:u0}, $H_{L,R}$ satisfy \eqref{ass:H} and $F_0$ satisfy  \eqref{ass:F0}. Suppose also that the CFL condition \eqref{eq:CFLSCL} is satisfied  and that
 \begin{equation}\label{eq:CFL-bis}
   \frac{\Delta t}{\Delta x} \frac \delta 2 M \leq 1,
   \end{equation}
where $M=\max(|a_L|,|c_L|,|a_R|, |c_R|)$ and $\delta$ is introduced in \eqref{ass:H}.
   Then $(p_\Delta)_\Delta$ converges almost everywhere  as $\Delta \longrightarrow (0,0)$ to $\rho \in L^{\infty}$, the unique solution  to \eqref{eq:SCL-strong}, in the sense of Definition \ref{def:SCLSolTrace}, with $A=A_{F_0}$ and $A_{F_0}$ given in Lemma \ref{lem:AF0}.
\end{theorem}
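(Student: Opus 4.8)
The plan is to read \eqref{eq:scheme-SCL} as a conservative finite--volume scheme whose numerical flux \emph{at the junction} $x=0$ is the desired coupling $F_0$, and to identify its limit through the classical Lax--Wendroff program: uniform bounds, compactness, interior entropy inequalities, and identification of the trace condition at the junction, with uniqueness (Theorem~\ref{th:eqSolEntropy}) upgrading subsequential to full convergence. \textbf{Uniform bounds.} First I would check that, under the CFL condition \eqref{eq:CFLSCL}, the update map $\mathcal F^n_j$ is non-decreasing in each of its three arguments: this combines the monotonicity of the Godunov fluxes $g^{H_\alpha}$ with the monotonicity of $F_0$ postulated in \eqref{ass:F0}, the factor $2$ in \eqref{eq:CFLSCL} absorbing the two flux differences. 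Monotonicity together with the normalisation $F_0(a_L,a_R)=F_0(c_L,c_R)=0=H_\alpha(a_\alpha)=H_\alpha(c_\alpha)$ makes the constant profiles $(a_L,a_R)$ and $(c_L,c_R)$ stationary solutions of the scheme, so by comparison the box $Q$ is invariant and the bounds on $\rho_0$ from \eqref{eq:u0} propagate, giving a uniform $L^\infty$ bound on $p_\Delta$.

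\textbf{Compactness and interior equation.} Since the fluxes are strictly convex ($H_\alpha''\ge\delta$), I would invoke the local compactness result of Subsection~\ref{s6b} --- whose discrete one-sided (Oleinik-type) estimate is precisely what the second CFL condition \eqref{eq:CFL-bis}, involving $\delta$ and $M$, is tuned to yield --- to extract a subsequence converging almost everywhere and in $L^1_{loc}$ to some $\rho\in L^\infty$. Strict convexity also grants, via Panov \cite{panov-trace}, strong one-sided traces $\gamma_{L,R}\rho$ at $x=0$. Away from the junction the scheme is the standard monotone Godunov scheme, so the discrete entropy inequalities of Subsection~\ref{s6a} pass to the limit by the Lax--Wendroff argument and yield conditions (1)--(2) of Definition~\ref{def:SCLSolTrace} for $\rho$ on $\{x<0\}$ and $\{x>0\}$.

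\textbf{The junction condition --- the main obstacle.} The delicate point, and the mechanism behind the relaxation from the desired $F_0$ to the effective limiter $A_{F_0}$, is condition (3): $(\gamma_L\rho,\gamma_R\rho)\in\mathcal G_{A_{F_0}}$. Here I would exploit Lemma~\ref{lem:generate}: it suffices to establish the three dissipation inequalities
\[
q_L(\gamma_L\rho,\bar k_L)-q_R(\gamma_R\rho,\bar k_R)\ge 0 \qquad \text{for } (\bar k_L,\bar k_R)\in\mathcal E_{A_{F_0}},
\]
against the three generating states of \eqref{eq:EA}. The crucial observation is that \emph{each} of these states is a stationary solution of the full scheme, junction flux included: for $(a_L,a_R)$ and $(c_L,c_R)$ this is again the normalisation in \eqref{ass:F0}, while for $(\bar p^{A_{F_0}}_L,\bar p^{A_{F_0}}_R)$ it is exactly the defining relation $F_0(\bar p)=H_L^-(\bar p_L)=H_R^+(\bar p_R)=A_{F_0}$ of Lemma~\ref{lem:AF0}, which forces the interface flux $F_0$ to agree there with both Godunov fluxes. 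Writing the discrete entropy inequality of Subsection~\ref{s6a} relative to each such two-state stationary profile, summing against a nonnegative test function supported near $x=0$ and letting $\Delta\to(0,0)$, the boundary contributions at the junction converge, thanks to the strong traces, precisely to the entropy fluxes $q_L(\gamma_L\rho,\bar k_L)$ and $q_R(\gamma_R\rho,\bar k_R)$. Lemma~\ref{lem:generate} then places the trace pair in $\mathcal G_{A_{F_0}}$; this is exactly where the scheme, although built with $F_0$, automatically selects the relaxed flux-limited condition.

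\textbf{Conclusion.} The limit $\rho$ satisfies the three conditions of Definition~\ref{def:SCLSolTrace} with $A=A_{F_0}$, hence is \emph{the} $\mathcal G_{A_{F_0}}$-entropy solution by Theorem~\ref{th:eqSolEntropy}. Since the limit does not depend on the extracted subsequence, the entire family $(p_\Delta)_\Delta$ converges almost everywhere to $\rho$, as claimed.
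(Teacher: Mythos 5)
Your proposal is correct and follows essentially the same route as the paper's proof: monotonicity and $L^\infty$-stability of the scheme under the CFL conditions, Oleinik-type compactness from the appendix, Lax--Wendroff passage to the limit in the discrete entropy inequalities for the interior conditions, and identification of the junction condition by testing against the generating set $\mathcal E_{A_{F_0}}$ --- where the residual $R_{F_0}$ vanishes, which is your ``stationary profile'' observation --- combined with the strong traces and Lemma~\ref{lem:generate}. The final step, using uniqueness (Theorem~\ref{th:eqSolEntropy}) to upgrade subsequential to full convergence, is also exactly the paper's conclusion.
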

\begin{remark}
   This result is rather classical if we take {$F_0:=\bar F_{A}$ for $A\in [H_0,0]$} in the numerical scheme \eqref{eq:scheme-SCL} and the proof of convergence has been written in a similar setting in various sources including \cite{AGS2010}, \cite{AbrahamPreprint} and \cite{Abraham2022}. The result we present here is stronger. Indeed, we put the desired condition $F_0$ in the scheme and we show that the numerical solution converges to the solution with the relaxed junction condition {$\bar F_{A_{F_0}}$}. The strategy of the proof is similar to the classical case, but for completeness' sake we rewrite it here, putting most of the heavy computations in Appendix.
\end{remark}

We first present the different lemmas that we piece together in order to get Theorem \ref{th:CVSCL}.

\begin{lemma}\label{lem:PBorne}\emph{(Monotonicity and stability)} \\
{Let $u_0$ satisfy \eqref{eq:u0}, $H_{L,R}$ satisfy \eqref{ass:H} and $F_0$ satisfy  \eqref{ass:F0}.} Suppose also that the CFL condition \eqref{eq:CFLSCL} is satisfied. Then the numerical scheme \eqref{eq:scheme-SCL} is monotone. That is to say  $\mathcal{F}^n_j$ is non-decreasing with respect to each of its three variables. Furthermore, the scheme is stable, {namely} we have
\begin{equation}\label{eq:stability}
\forall n \in \mathbb{N}, \forall j \in\mathbb{Z}, \; p^n_{j+1/2} \in
\left\{\begin{array}{ll}
[a_L,c_L] &{\rm if}\; j\le -1\\
{[a_R,c_R]} &{\rm if}\; j\ge 0.
\end{array}\right.
\end{equation}
\end{lemma}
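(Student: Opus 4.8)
The plan is to prove the two assertions in turn: first monotonicity of $\mathcal{F}^n_j$, and then to deduce the invariant-region bound \eqref{eq:stability} from monotonicity by a comparison argument against constant states.

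For monotonicity, I would write each of the four cases of \eqref{eq:scheme-SCL} in the common conservative form
\[
p^{n+1}_{j+\frac12}=p^n_{j+\frac12}-\frac{\Delta t}{\Delta x}\bigl(\mathcal{G}_{\mathrm{out}}-\mathcal{G}_{\mathrm{in}}\bigr),
\]
where each interface flux $\mathcal{G}_{\mathrm{in}},\mathcal{G}_{\mathrm{out}}$ is one of $g^{H_L}, g^{H_R}, F_0$, and differentiate with respect to the three arguments $p^n_{j-\frac12},p^n_{j+\frac12},p^n_{j+\frac32}$. The two off-diagonal derivatives carry the right sign automatically: $p^n_{j-\frac12}$ enters only through $\mathcal{G}_{\mathrm{in}}$ in its first slot with a $+$ sign, and $p^n_{j+\frac32}$ only through $\mathcal{G}_{\mathrm{out}}$ in its second slot with a $-$ sign; since each $g^{H_\alpha}$ is non-decreasing in the first and non-increasing in the second variable, and $F_0$ satisfies the monotonicity condition of \eqref{ass:F0}, both derivatives are $\ge 0$. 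The only delicate term is the diagonal one, $\partial_{p_{j+1/2}}\mathcal{F}^n_j=1-\frac{\Delta t}{\Delta x}\bigl(\partial_1\mathcal{G}_{\mathrm{out}}-\partial_2\mathcal{G}_{\mathrm{in}}\bigr)$. Each factor is bounded in absolute value by $L_{\mathcal{H}}$, so the bracket lies in $[0,2L_{\mathcal{H}}]$, and the CFL condition \eqref{eq:CFLSCL} gives $1-\frac{\Delta t}{\Delta x}\,2L_{\mathcal{H}}\ge 0$. This is exactly where \eqref{eq:CFLSCL} is used (and the only place the CFL enters; \eqref{eq:CFL-bis} is not needed here), and the four cases differ only in which of $g^{H_L},g^{H_R},F_0$ plays the role of $\mathcal{G}_{\mathrm{in}}$ and $\mathcal{G}_{\mathrm{out}}$.

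For stability I would argue by induction on $n$, the engine being the comparison principle supplied by monotonicity. The base case $n=0$ is immediate: $p^0_{j+1/2}$ is the cell average of $\rho_0=(u_0)_x$, which by \eqref{eq:u0} lies a.e.\ in $[a_L,c_L]$ for $x<0$ and in $[a_R,c_R]$ for $x>0$; since no cell straddles the origin, each average inherits the corresponding box bound. The crucial observation for the induction step is that the two extreme constant states, $\underline{p}_{j+1/2}:=a_L$ for $j\le-1$ and $a_R$ for $j\ge0$, and $\overline{p}_{j+1/2}:=c_L$ for $j\le-1$ and $c_R$ for $j\ge0$, are \emph{stationary} for the scheme. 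Indeed, at an interior stencil $g^{H_\alpha}(a_\alpha,a_\alpha)=H_\alpha(a_\alpha)=0$, while at the two junction cells $j=0,-1$ one also needs $F_0(a_L,a_R)=0$; both hold by \eqref{ass:H} together with the boundedness normalization $F_0(a_L,a_R)=F_0(c_L,c_R)=0$ in \eqref{ass:F0}, so $\mathcal{F}^n_j(\underline{p}_{j-1/2},\underline{p}_{j+1/2},\underline{p}_{j+3/2})=\underline{p}_{j+1/2}$, and likewise for $\overline{p}$.

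Granting the invariant at step $n$, one has $\underline{p}_{i+1/2}\le p^n_{i+1/2}\le\overline{p}_{i+1/2}$ for every half-integer index $i$ (here it matters that each index is assigned to its correct box). Applying the non-decreasing map $\mathcal{F}^n_j$ componentwise and invoking that the extreme states are fixed points yields
\[
\underline{p}_{j+1/2}=\mathcal{F}^n_j(\underline{p}_{j-1/2},\underline{p}_{j+1/2},\underline{p}_{j+3/2})\le p^{n+1}_{j+1/2}\le\mathcal{F}^n_j(\overline{p}_{j-1/2},\overline{p}_{j+1/2},\overline{p}_{j+3/2})=\overline{p}_{j+1/2},
\]
which is the invariant at step $n+1$. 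I expect the only genuine subtlety to be the behavior at the junction cells $j=0,-1$: away from the origin this is the classical invariant-region argument for monotone conservative schemes, but preservation of the bound across the coupling hinges precisely on the compatibility of $F_0(a_L,a_R)=F_0(c_L,c_R)=0$ with $H_\alpha(a_\alpha)=H_\alpha(c_\alpha)=0$, which keeps the extreme states stationary even through $F_0$; once this is checked, the comparison step goes through verbatim.
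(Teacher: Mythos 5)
Your proof is correct and follows essentially the same route as the paper: monotonicity of the off-diagonal arguments from the monotonicity of $g^{H_\alpha}$ and $F_0$, the diagonal derivative controlled by the CFL condition \eqref{eq:CFLSCL} via the Lipschitz bound $L_{\mathcal H}$, and stability by induction comparing with the extreme constant states, which are fixed points of $\mathcal F^n_j$ exactly because $H_\alpha(a_\alpha)=H_\alpha(c_\alpha)=0$ and $F_0(a_L,a_R)=F_0(c_L,c_R)=0$. Your framing of the induction step as a comparison against stationary super/sub-states is precisely the computation $\mathcal F^n_j(a_L,a_R,a_R)=a_R$, etc., carried out in the paper, with the same identification of the junction cells $j=0,-1$ as the only places where the compatibility between $F_0$ and the $H_\alpha$ is needed.
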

\begin{proof}
We begin to prove  the {monotonicity}.
Fix $n,j$.
Recall that the Godunov flux $g^H$ and the junction condition $F_0$ are non-decreasing with respect to their first argument and non-increasing with respect to their second one.  Then,
\begin{align*}
   \forall v,w \in \mathbb{R}, \; u \mapsto \mathcal{F}^n_j (u,v,w) \textrm{ is non-decreasing,} \\
   \forall u,v \in \mathbb{R}, \; w \mapsto \mathcal{F}^n_j (u,v,w) \textrm{ is non-decreasing.}
\end{align*}
Notice that, for a given $H$, the derivative of the Godunov flux $g^H$ is bounded by the Lipschitz constant $L_H$ of $H$
\begin{align*}
   \partial_{p_1} g^H(p_1,p_2) \in [0; L_H] \quad \partial_{p_2} g^H(p_1,p_2) \in [-L_H,0]
\end{align*}
Recalling that $L_{\mathcal{H}} := \max( L_{H_L}, L_{H_R}, ||\partial_{p_1} F_0(p_1,p_2)||_\infty, ||\partial_{p_2} F_0(p_1,p_2)||_\infty )$, we also have
$$\partial_{p_1}F_0(p_1,p_2)\in[0,L_{\mathcal H}]\quad \partial_{p_1}F_0(p_1,p_2)\in[-L_{\mathcal H},0].$$
Then
\begin{align*}
\partial_v \mathcal{F}^n_j (u,v,w) &\geq 1 - \frac{\Delta_t}{\Delta_x} \left( L_{\mathcal{H}} -(-L_{\mathcal{H}}) \right)\\
&\geq 1 - 2 \frac{\Delta_t}{\Delta_x} L_{\mathcal{H}}.
\end{align*}
Since the CFL condition \eqref{eq:CFLSCL} is satisfied, we recover that $v \mapsto \mathcal{F}^n_j (u,v,w)$ is non-decreasing.

\smallskip

We now prove the stability result by induction on $n$.
First, by assumption \eqref{eq:u0}, the property \eqref{eq:stability} is true for $n=0$.
Fix $n \in \mathbb{N}$ such that \eqref{eq:stability} holds true for $n$. We recall that
$$\rho^{n+1}_{j+1/2} = \mathcal{F}^n_j(\rho^n_{j-1/2},\rho^n_{j+1/2},\rho^n_{j+3/2}).$$
If $j\ge 1$, by monotonicity of the scheme, we then have
$$\rho^{n+1}_{j+1/2} \ge  \mathcal{F}^n_j(a_R, a_R,a_R)=a_R-\frac {\Delta t}{\Delta x}(H(a_R)-H(a_R))=a_R$$
and
$$\rho^{n+1}_{j+1/2} \le  \mathcal{F}^n_j(c_R, c_R,c_R)=c_R-\frac {\Delta t}{\Delta x}(H(c_R)-H(c_R))=c_R.$$
In the same way, if $j=0$, we get
$$\rho^{n+1}_{j+1/2} \ge  \mathcal{F}^n_j(a_L, a_R,a_R)=a_R-\frac {\Delta t}{\Delta x}(H(a_R)-F_0(a_L,a_R))=a_R$$
and
$$\rho^{n+1}_{j+1/2} \le  \mathcal{F}^n_j(c_L, c_R,c_R)=c_R-\frac {\Delta t}{\Delta x}(H(c_R)-F_0(c_L,c_R))=c_R,$$
where we used Assumption \eqref{ass:F0} to get that $F_0(a_L,a_R)=F_0(c_L,c_R)=0$. Using the same arguments, we get also the result for $j\le 1$. This ends the proof of the lemma.
\end{proof}

Recall that, associated to the entropy $p\mapsto |p-k|$, is the entropy flux
$$p\mapsto \mbox{sign}(p-k)\cdot \left\{H(p)-H(k)\right\}=H(p\wedge k)-H(p\vee k)$$
where we used the notation, for any $a,b \in \mathbb{R}$,  $a\vee b=\max(a,b)$ and $a\wedge b=\min(a,b)$.
This naturally suggests the following result.

\begin{lemma}[Discrete entropy inequalities]\label{lem:InegEntropiqueDiscrete}
 Let $u_0$  {satisfy \eqref{eq:u0}, $H_{L,R}$ satisfy \eqref{ass:H} and $F_0$ satisfy \eqref{ass:F0}}. Suppose also that the CFL condition \eqref{eq:CFLSCL} is satisfied. Let $T>0$ and $(p_\Delta)_\Delta$ be defined by \eqref{eq:300}.
For any $(k_L,k_R) \in Q$, writing $k_\Delta = k_L \ind_{j \leq -1} + k_R \ind_{j \geq 0}$, we set
\begin{align}\label{eq:301}
\Phi_j^n(k_\Delta)=\left\{ \begin{matrix}
   g^{H_L}(p_{j-1/2}^n \vee k_L, p_{j+1/2}^n \vee k_L) - g^{H_L}(p_{j-1/2}^n \wedge k_L, p_{j+1/2}^n \wedge k_L) & \textrm{ if } & j \leq -1 \\
   g^{H_R}(p_{j-1/2}^n \vee k_R, p_{j+1/2}^n \vee k_R) - g^{H_R}(p_{j-1/2}^n \wedge k_R, p_{j+1/2}^n \wedge k_R) & \textrm{ if } & j \geq 1 \\
   F_0(p_{j-1/2}^n \vee k_L, p_{j+1/2}^n \vee k_R) - F_0(p_{j-1/2}^n \wedge k_L, p_{j+1/2}^n \wedge k_R) & \textrm{ if } & j = 0.\\
\end{matrix}\right.
\end{align}
 We also set
\begin{align*}
   \Phi_{\Delta}(k_\Delta) := \sum_{n\in\mathbb{N}} \sum_{j\in\mathbb{Z}} \Phi_{j}^n(k_\Delta) \ind_{[t_n, t_{n+1})\times [x_j, x_{j+1})}.
\end{align*}
Then, for any $\phi \in {C}^{\infty}_c((0,T)\times\mathbb{R})$ non-negative, we have, with $p_\Delta$ defined in \eqref{eq:300},
   \begin{align}\label{eq:entro-discrete}
       \int_0^T \int_{\mathbb{R}}\left( \left| p_{\Delta} - k_\Delta \right|  \phi_t + \Phi_\Delta(k_\Delta)  \phi_x \right)\d t \d x
       + \int_{\mathbb{R}} \left| p_{\Delta}(0,x) - k_\Delta \right| \phi(0,x) \d x \notag \\
       + \int_0^T R_{F_0}(k_L,k_R) \phi(t,0) \d t \geq O (\Delta x) + O (\Delta t),
   \end{align}
   where $R_{F_0}(k_L,k_R) := |H_L(k_L) - F_0(k_L,k_R)| + |H_R(k_R) - F_0(k_L,k_R)|$.
\end{lemma}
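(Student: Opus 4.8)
The plan is to establish the discrete entropy inequality \eqref{eq:entro-discrete} by the standard monotone-scheme strategy adapted to the junction. The starting point is a \emph{discrete entropy cell inequality}: for each cell index $j$ and time level $n$, using the monotonicity of the scheme (Lemma \ref{lem:PBorne}) together with the Crandall--Tadmor / Crandall--Majda argument, one shows
\begin{equation}\label{eq:cell-entropy}
\left| p^{n+1}_{j+\frac12}-k_\Delta\right| \leq \left| p^{n}_{j+\frac12}-k_\Delta\right| - \frac{\Delta t}{\Delta x}\left(\Phi^n_{j+1}(k_\Delta)-\Phi^n_j(k_\Delta)\right) + (\text{junction defect})_j^n,
\end{equation}
where the numerical entropy flux $\Phi^n_j$ is exactly the one defined in \eqref{eq:301}. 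The mechanism is that a monotone scheme $\mathcal F^n_j$ satisfies $\mathcal F^n_j(u\vee k,v\vee k,w\vee k)\ge \mathcal F^n_j(u,v,w)\vee k$ and the reverse inequality with $\wedge$; subtracting these and noting $|a-k|=(a\vee k)-(a\wedge k)$ yields \eqref{eq:cell-entropy} away from the junction with \emph{no} defect term, because there the flux function $g^{H_\alpha}$ is the same on both sides of the cell interface.

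The genuinely delicate point, and the reason the statement carries the remainder term $R_{F_0}(k_L,k_R)$, is the treatment of the three cells $j\in\{-1,0\}$ adjacent to the junction. There the scheme \eqref{eq:scheme-SCL} uses two \emph{different} flux functions at the two faces of a cell: at $j=0$ the incoming flux is $F_0$ while the outgoing is $g^{H_R}$, and symmetrically at $j=-1$. Consequently the telescoping that produces a clean conservative difference $\Phi^n_{j+1}-\Phi^n_j$ fails, and one must add and subtract the mismatched quantities. The key algebraic identity to exploit is that for $k=(k_L,k_R)$ the discrepancy between the $F_0$-entropy flux and the $g^{H_\alpha}$-entropy flux at the junction face is controlled, after summation against $\phi(t,0)$, precisely by $|H_L(k_L)-F_0(k_L,k_R)|+|H_R(k_R)-F_0(k_L,k_R)|$; this uses $g^{H_\alpha}(k_\alpha,k_\alpha)=H_\alpha(k_\alpha)$ and $F_0(k_L,k_R)$ being the common transmitted flux value. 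I expect this bookkeeping at the junction to be the main obstacle: one must carefully isolate the uncancelled terms, show the interior part telescopes, and recognize the leftover junction contribution as $R_{F_0}(k_L,k_R)$ tested against $\phi(t,0)$.

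The remaining steps are routine. I would multiply \eqref{eq:cell-entropy} by $\phi(t_n,x_j)\ge 0$, sum over $j\in\mathbb Z$ and $n\ge 0$, and perform discrete integration by parts (Abel summation) in both $n$ and $j$. The interior flux differences telescope and land on the discrete $x$-derivative of $\phi$, producing the term $\int_0^T\int_{\mathbb R}\Phi_\Delta(k_\Delta)\phi_x$; the time differences produce $\int_0^T\int_{\mathbb R}|p_\Delta-k_\Delta|\phi_t$ together with the initial data term $\int_{\mathbb R}|p_\Delta(0,x)-k_\Delta|\phi(0,x)$. Replacing $\phi$ evaluated at grid points by the continuous $\phi$ and its partial derivatives introduces errors controlled by the smoothness and compact support of $\phi$; since the fluxes are Lipschitz and $p_\Delta$ is uniformly bounded in the box $Q$ by Lemma \ref{lem:PBorne}, each such substitution costs at most $O(\Delta x)+O(\Delta t)$, giving the right-hand side of \eqref{eq:entro-discrete}. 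Finally the collected junction terms assemble into $\int_0^T R_{F_0}(k_L,k_R)\phi(t,0)\,\d t$, which completes the inequality. As the paper signals, I would relegate the heaviest index manipulations at the junction to the appendix (Subsection \ref{s6a}), keeping in the main text only the cell inequality \eqref{eq:cell-entropy}, the telescoping structure, and the identification of the junction defect.
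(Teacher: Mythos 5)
Your proposal is correct and coincides with the paper's own argument: the paper proves precisely your cell inequality as Lemma \ref{lem:A1} (the Crandall--Majda $\vee/\wedge$ argument based on the monotonicity from Lemma \ref{lem:PBorne}, with defects $R_L/\Delta x$ at $j=-1$ and $R_R/\Delta x$ at $j=0$ coming from exactly the mismatch $H_\alpha(k_\alpha)-F_0(k_L,k_R)$ that you identify), and then multiplies by cell averages of $\phi$, Abel-sums in $j$ and $n$, and controls the quadrature errors by $O(\Delta x)+O(\Delta t)$, as you outline. The only cosmetic differences are that the paper tests with the cell averages $\phi^{n+1}_{j+1/2}$ rather than point values $\phi(t_n,x_j)$, and that it retains the conservative difference $\Phi^n_{j+1}(k_\Delta)-\Phi^n_j(k_\Delta)$ even at the junction cells (the defect enters because $k_\Delta$ is not a discrete stationary solution there, i.e.\ $\mathcal{F}^n_j(k_\Delta,k_\Delta,k_\Delta)\neq k_\Delta$, rather than because the telescoping itself breaks) --- neither difference affects the argument.
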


\begin{remark}
The proof of this lemma is pretty straightforward and derives directly from the monotonicity proven in Lemma \ref{lem:PBorne}. Since it contains long computations, we postponed it to the Apppendix.
\end{remark}

Finally, in order to get the desired convergence, we also need the compactness of $(p_{\Delta})_{\Delta}$. We use the following lemma, which proof is also postponed to the Appendix.

\begin{lemma}[Compactness of $\rho_\Delta$]\label{lem:Compactness}
{Let $u_0$ satisfy \eqref{eq:u0}, $H_{L,R}$ satisfy \eqref{ass:H} and $F_0$ satisfy \eqref{ass:F0}.}
     For any $l$, let $(\Delta_l)_l$ verify the CFL condition \eqref{eq:CFLSCL}  {and \eqref{eq:CFL-bis}.}
   Then, there exists $\rho \in L^{\infty}$ and a subsequence also denoted $(p_{\Delta_l})_l$ such that
   $$p_{\Delta_l} \longrightarrow \rho \quad {\textrm{ a.e. as $\Delta_l\to 0$.}}$$
\end{lemma}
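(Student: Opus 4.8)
The plan is to establish a discrete one-sided Lipschitz (Oleinik-type) estimate valid in the interior of each branch, then to combine it with the uniform $L^\infty$ bound of Lemma \ref{lem:PBorne} to control the spatial total variation on compact subsets away from the junction; local $L^1$ compactness will follow, and an a.e.\ convergent subsequence on all of $(0,T)\times\R$ is recovered by a diagonal extraction, using that $\{x=0\}$ is Lebesgue-negligible.

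First I would work on a single branch, say $x>0$, where \eqref{eq:scheme-SCL} reduces to the Godunov scheme for the strictly convex flux $H_R$. Writing the discrete slope $D^n_{j}:=(p^n_{j+1/2}-p^n_{j-1/2})/\Delta x$, the aim is to show that the strict convexity $H_R''\ge\delta$ from \eqref{ass:H}, together with the two CFL conditions \eqref{eq:CFLSCL}--\eqref{eq:CFL-bis}, propagates a bound of the form
$$D^n_{j}\le \frac{C}{t_n}\qquad\text{for }j\ge 2,$$
uniformly in $\Delta$, with $C$ depending only on $\delta$ and $M$. This is the discrete counterpart of the Oleinik estimate $\rho_x\le C/t$ for convex scalar conservation laws: one checks that $D^n$ obeys a monotone recursion whose convex structure prevents the positive slopes from growing, and the condition \eqref{eq:CFL-bis}, which involves precisely the convexity constant $\delta$ and the a priori bound $M$, is exactly what guarantees the propagation of this one-sided bound under the scheme. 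The same computation applies on the branch $x<0$.

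Next I would convert this one-sided bound into a local bound on the spatial total variation. On any interval $[r,R]$ with $0<r<R$ (and symmetrically on $[-R,-r]$), the sum of the positive increments of $x\mapsto p_\Delta(t_n,x)$ is bounded by $C(R-r)/t_n$, while the $L^\infty$ stability $|p^n_{j+1/2}|\le M$ of Lemma \ref{lem:PBorne} bounds the difference between the sums of positive and of negative increments by $2M$; hence the spatial total variation over $[r,R]$ is controlled, uniformly in $\Delta$, by a constant depending only on $r,R,t_n,M$. Rewriting the time increment $p^{n+1}_{j+1/2}-p^n_{j+1/2}$ as a discrete flux difference and using the Lipschitz bounds on $g^{H_\alpha}$ and $F_0$ then turns this spatial bound into an $L^1$-in-time modulus of continuity, so that $(p_\Delta)$ is bounded in $BV$ on every compact rectangle $[\tau,T]\times[r,R]$ (and its mirror) with $0<\tau$ and $0<r<R$.

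Finally, the compact embedding of $BV$ into $L^1$ on each such rectangle yields a subsequence converging in $L^1$ and a.e., and a diagonal argument over an exhaustion of $(0,T)\times(\R\setminus\{0\})$ by the rectangles $[\tau,T]\times([-R,-r]\cup[r,R])$ produces a single subsequence $(p_{\Delta_l})$ converging a.e.\ on $(0,T)\times(\R\setminus\{0\})$, hence a.e.\ on $(0,T)\times\R$. I expect the main obstacle to be the junction itself: the derivation of the Oleinik bound relies on the convexity of the flux and fails at the two cells $j=-1,0$ adjacent to $x=0$, where the numerical flux is the desired condition $F_0$ rather than a Godunov flux. I would circumvent this by establishing the one-sided estimate only strictly inside each branch, where the convexity mechanism caps the positive slopes independently of the (possibly large) slope created at the junction, and by observing that a.e.\ convergence never requires control at $x=0$; likewise the degeneracy of the bound as $t_n\to 0$ is harmless, since only convergence for $t>0$ is needed and is captured by letting $\tau\downarrow 0$ in the exhaustion.
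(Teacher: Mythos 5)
Your overall architecture --- a discrete Oleinik estimate inside each branch, conversion into local spatial TV bounds via the $L^\infty$ bound of Lemma \ref{lem:PBorne}, time regularity read off from the scheme, BV compactness and a diagonal extraction away from $x=0$ --- is exactly the paper's (Lemmas \ref{lem:BoundDiscreteGradient}, \ref{lem:discreteOleinik}, \ref{lem::38} and Proposition \ref{pro:compactnessbranch}). However, your key estimate is stated in a form that is false, and the sentence you offer to justify it (``the convexity mechanism caps the positive slopes independently of the (possibly large) slope created at the junction'') is precisely where the argument breaks. The discrete slope recursion couples neighbouring cells: one has $\max(0,w^{n+1}_j)\le \Phi(\hat w^n_j)$ with $\hat w^n_j=\max(0,w^n_{j-1},w^n_j,w^n_{j+1})$ and $\Phi(w)=w-\Delta t\,\tfrac{\delta}{8}w^2$ monotone on the relevant range. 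The slope $w^n_1$ of the cell next to the junction is controlled only by $2M/\Delta x$ (its flux is $F_0$, not Godunov, so no Oleinik decay there), and it feeds into $w^{n+1}_2$, then into $w^{n+2}_3$, and so on; iterating $\Phi$ shows this pollution decays only to about $\tfrac{8}{\delta m\Delta t}$ after $m$ steps. Hence the correct interior bound at cell $j$ and time $t_n$ is of order $\tfrac{8}{\delta}\max\bigl(\tfrac{1}{t_n},\tfrac{1}{(j-1)\Delta t}\bigr)$, i.e.\ it degenerates like the inverse distance to the junction (measured in cells), not like $C/t_n$ with $C=C(\delta,M)$. At the PDE level this is unavoidable: a wave reaching the junction at a time $t_0>0$ can make the junction emit a rarefaction into the branch, whose slope is $\approx \tfrac{1}{\delta(t-t_0)}$ and exceeds any bound of the form $C/t$ arbitrarily close to $x=0$.

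As a consequence, your TV bound on rectangles $[\tau,T]\times[r,R]$ with a ``constant depending only on $r,R,t_n,M$'' does not follow from the estimate you sketched; the constant must incorporate the distance $r$ to the junction through the propagation mechanism above (and, as written, also the mesh ratio $\Delta x/\Delta t$, which the CFL conditions \eqref{eq:CFLSCL}--\eqref{eq:CFL-bis} bound only from below). This is exactly why the paper proves the Oleinik estimate on index windows $J_1+n\le j\le J_2-n$ that shrink by one cell per time step (Lemma \ref{lem:discreteOleinik}), and then derives BV bounds on triangles --- discrete domains of determinacy --- rather than on rectangles, before covering $(0,\infty)\times(0,\infty)$ by such triangles and extracting diagonally (Proposition \ref{pro:compactnessbranch}). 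Your plan becomes correct once your uniform interior claim is replaced by this localized version: all the downstream steps (positive/negative increment bookkeeping, time modulus from the scheme, BV compactness, negligibility of $\{x=0\}$, diagonal argument) then coincide with the paper's.
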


We are now in a position to prove Theorem \ref{th:CVSCL}.
\begin{proof}[Proof of Theorem \ref{th:CVSCL}]
   First, using Lemma \ref{lem:Compactness}, we take a subsequence of $p_\Delta$ that converges to $\rho \in L^\infty$ a.e.. We now want to prove that $\rho$ is a solution to \eqref{eq:SCL-strong} in the sense of Definition \ref{def:SCLSolTrace}. The first point of Definition \ref{def:SCLSolTrace} is classical and we skip it.\medskip

 Let $\phi\in C^\infty_c([0,T)\times \mathbb R^-)$ be non-negative and  $(k_L,k_R) \in Q$. We first want to prove that
   \begin{equation}\label{eq:500}
   \int_0^T \int_{\mathbb{R}^-} \Phi_\Delta (k_\Delta) \phi_x \d t \d x \longrightarrow \int_0^T \int_{\mathbb{R}^-} \sign(\rho - k_L) \left[ H_L(\rho) - H_L(k_L) \right]  \phi_x \d t \d x.
   \end{equation}
   Let $x < 0$ and $t \in [0,T)$ such that $p_\Delta (t,x) \longrightarrow \rho$. Then, for any $\Delta x$, there exists $j \leq -1$ such that $p_\Delta (t,x) = p_{j+1/2}^n$ and

   \begin{align*}
       \Phi_\Delta(k_\Delta)(t,x) =& g^{H_L}(p_\Delta(t,x - \Delta x) \vee k_L, p_\Delta (t,x) \vee k_L) - g^{H_L}(p_\Delta(t,x - \Delta x) \wedge k_L, p_\Delta (t,x) \wedge k_L) \\
       =& g^{H_L}(p_\Delta(t,x - \Delta x) \vee k_L, p_\Delta (t,x) \vee k_L) - g^{H_L}(p_\Delta (t,x) \vee k_L, p_\Delta (t,x) \vee k_L) \\
       &+ \sign(p_\Delta(t,x) -k_L) \left[ H_L(p_\Delta(t,x)) -H_L(k_L) \right] \\
       &+ g^{H_L}(p_\Delta (t,x) \wedge k_L, p_\Delta (t,x) \wedge k_L) -g^{H_L}(p_\Delta(t,x - \Delta x) \wedge k_L, p_\Delta (t,x) \wedge k_L).
   \end{align*}
   Using the Lipschitz bound on the Godunov flux, we recover:
   $$\int_0^T \int_\mathbb{R^-} \Phi_\Delta(k_\Delta)  \phi_x \d t \d x = \int_0^T \int_{\mathbb{R}^-} \sign(p_\Delta - k_L) \left[ H_L(p_\Delta) - H_L(k_L) \right]  \phi_x \d t \d x + \mathcal I_1$$
   where
   \begin{align*}
       |\mathcal I_1| &\leq 2 L_\mathcal{H} \int_0^T \int_{\mathbb{R}^-} \left| p_\Delta (t,x) - p_\Delta (t,x -\Delta x)\right| | \phi_x(t,x)| \d t \d x \\
       &\leq 2 L_\mathcal{H} \int_0^T \int_{\mathbb{R}^-} \left| p_\Delta (t,x) \phi_x(t,x) - p_\Delta (t,x -\Delta x) \phi_x(t,x)\right|  \d t \d x \\
       &\leq 2 L_\mathcal{H} \bigg[ \int_0^T \int_{\mathbb{R}^-} \left|  p_\Delta (t,x) \phi_x(t,x) - p_\Delta (t,x -\Delta x)  \phi_x(t,x - \Delta x)\right|  \d t \d x \\
       &+ \int_0^T \int_{\mathbb{R}^-} \left|  p_\Delta (t,x -\Delta x) \phi_x(t,x) - p_\Delta (t,x -\Delta x)  \phi_x(t,x - \Delta x)\right|  \d t \d x
       \bigg] \\
       &=: \mathcal I_2+\mathcal I_3.
   \end{align*}
   First, notice that
   $$\mathcal I_3 \leq 2 L_\mathcal{H} \int_0^T \int_{\mathbb{R}^-} \left| p_\Delta (t,x -\Delta x)\int_{x - \Delta x}^x  \phi_{xx}(t,y) \d y \right|  \d t \d x \leq 2 L_\mathcal{H} ||p_\Delta||_{L^\infty} || \phi_{xx} ||_{L^1} \Delta x.$$
   Now, since $p_\Delta \longrightarrow \rho$ a.e. and $|p_\Delta  \phi_x| \leq C | \phi_x| \in L^1((0,T)\times \mathbb{R})$, the sequence $(p_\Delta  \phi_x)_\Delta$ is convergent {to $\rho \phi_x$ in $L^1((0,T)\times \R)$.
From Frechet-Kolmogorov Theorem, we recall that
$$\lim_{\Delta x \rightarrow 0} ||\tau_{\Delta x} (\rho  \phi_x) - \rho  \phi_x||_{L^1((0,T)\times\mathbb{R})} = 0,$$
   where $\tau_{\Delta x}f(x)=f(x-\Delta x)$. It is then easy to see that} $\mathcal I_2 = o(1)$ when $\Delta \longrightarrow (0,0)$. This implies \eqref{eq:500}.

Then, for  $\phi \in {C}^\infty_c([0,T)\times\mathbb{R}^-)$, passing to the limit in \eqref{eq:entro-discrete}, we get
$$
\iint_{(0,T)\times \mathbb{R}^{-}} |\rho-k_L| \phi_t + \sign(\rho-k_L)\left[H_{L}(\rho) - H_{L}(k_L)\right] \phi_x + \int_{\mathbb{R}^-} |\rho_0(x) -k_L | \phi(0,x) \d x \geq 0.
$$
The analogous result holds if $\phi$ is compactly supported in $[0,T) \times (0,+\infty)$. Note that, when treating this case, we need to consider a $\Delta_0$ such that for any $\Delta x \leq \Delta_0$, $p_\Delta (t,x) = p_{j+1/2}^n$ with $j\geq 1$. We can however choose $\Delta x$ to be small enough such that $\phi = 0$ on $(0,\Delta x)$ and recover the analogous inequalities. So the second condition in Definition \ref{def:SCLSolTrace} is satisfied. \medskip

We now want to prove the third point. Let ${(k_L,k_R)}\in \mathcal E_{A_{F_0}}$, where $\mathcal E_{A_{F_0}}$ is defined in \eqref{eq:EA}. In particular, we have $R_{F_0}(k_L,  k_R) =0$.
Using  the same reasoning as before with  $\phi \in {C}^\infty_c((0,T) \times \mathbb{R})$, we get, using the notation
$ \mathcal{H}(x,p)  := H_L(p) \cdot \ind_{\mathbb{R}^-}(x) +H_R(p)\cdot  \ind_{\mathbb{R}^+}(x)$ and $k(x)  :=k_L \cdot  \ind_{\mathbb{R}^-}(x)+  k_R\cdot \ind_{\mathbb{R}^+}(x)$, that

$$\int_0^T \int_\mathbb{R} \Phi_\Delta (k_\Delta) \phi_x \d t \d x \longrightarrow \int_0^T \int_{\mathbb{R}\backslash\{0\}} \sign(\rho -  {k}) \left[ \mathcal{H}{(x,\rho)} - \mathcal{H}(x,k) \right]  \phi_x \d t \d x $$
and
$$
\iint_{(0,T)\times \mathbb{R}} |\rho- k| \phi_t + \sign(\rho -  k) \left[ \mathcal{H}(x,\rho) - \mathcal{H}(x,k) \right] \phi_x \geq 0
$$
Using a sequence of test functions focusing on $x=0$, we then recover
$$
   \int_0^T \left[q_L(\gamma_L \rho,  k_L) - q_R(\gamma_R \rho,  k_R) \right] \phi(t,0) \d t \geq 0,
$$   where the $q_\alpha$ are defined  in Lemma \ref{lem:generate}.
Then, for almost every $t$,
$$q_L(\gamma_L \rho,  k_L) - q_R(\gamma_R \rho,  k_R) \geq 0.$$
Using Lemma \ref{lem:generate} {and the fact that $(\gamma_L \rho,\gamma_R \rho)\in Q$ a.e. on $(0,T)$}, we deduce that {$(\gamma_L \rho, \gamma_R \rho) \in \mathcal{G}_{A_{F_0}}$ a.e. on $(0,T)$} and we recover that $\rho$ satisfies the third condition of Definition \ref{def:SCLSolTrace}.
Finally the uniqueness of $\rho$ follows from the first point of Theorem \ref{th:eqSolEntropy}.

\smallskip

\end{proof}

{We now state and prove the following result.
\begin{lemma}[Completeness of $\mathcal G_A$]\label{lem::t4}
Under the assumptions of Proposition \ref{pro:propertiesGAF0}, the $L^1$-dissipative germ $\mathcal G_A$ is complete.
\end{lemma}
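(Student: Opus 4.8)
The plan is to read off completeness directly from the convergence of the numerical scheme of Subsection \ref{subsec:4.2}. By definition, to show that the $L^1$-dissipative germ $\mathcal{G}_A$ is complete I must produce, for each Riemann datum $(\hat k_L,\hat k_R)\in Q$, a strong $\mathcal{G}_A$-entropy solution of \eqref{eq:SCL-strong} with $\rho_0=\hat k_L\ind_{(-\infty,0)}+\hat k_R\ind_{(0,+\infty)}$. Rather than constructing such a solution by hand, I would obtain it as the almost everywhere limit of the scheme \eqref{eq:scheme-SCL}, exactly as furnished by Theorem \ref{th:CVSCL}.

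Concretely, I would first choose the desired coupling condition $F_0:=\bar F_A$ from \eqref{eq:F-A}. One checks that $\bar F_A$ satisfies \eqref{ass:F0}: it is a maximum of piecewise $C^1$ functions, nondecreasing in $p_L$ through $H_L^+$ and nonincreasing in $p_R$ through $H_R^-$, semi-coercive, and $\bar F_A(a_L,a_R)=\bar F_A(c_L,c_R)=0$ because $A\le 0$ and $\min H_\alpha<0$. By the last assertion of Lemma \ref{lem:AF0}, the associated flux limiter is then $A_{F_0}=A$. Next, I take the piecewise linear, continuous initial datum $u_0(x)=\hat k_L\, x\,\ind_{\{x<0\}}+\hat k_R\, x\,\ind_{\{x>0\}}$, which is Lipschitz and satisfies \eqref{eq:u0} precisely because $(\hat k_L,\hat k_R)\in Q$; its derivative is the prescribed Riemann datum $\rho_0=(u_0)_x$. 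Fixing any $\Delta$ obeying the CFL conditions \eqref{eq:CFLSCL} and \eqref{eq:CFL-bis} and applying Theorem \ref{th:CVSCL}, the numerical solutions $p_\Delta$ converge almost everywhere to some $\rho\in L^\infty$ that is a $\mathcal{G}_A$-entropy solution of \eqref{eq:SCL-strong} in the sense of Definition \ref{def:SCLSolTrace}. Since $(\hat k_L,\hat k_R)\in Q$ was arbitrary, this yields completeness.

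The one point requiring care — and the only real obstacle — is to make sure this argument is not circular. Completeness is the missing ingredient in the existence half of Theorem \ref{th:eqSolEntropy}, so I must verify that the existence, i.e. convergence, half of Theorem \ref{th:CVSCL} does not itself appeal to completeness. Inspecting that proof, the construction of the limit $\rho$ and the verification of the three conditions of Definition \ref{def:SCLSolTrace} rely only on the monotonicity and stability of the scheme (Lemma \ref{lem:PBorne}), the discrete entropy inequalities (Lemma \ref{lem:InegEntropiqueDiscrete}), the compactness estimate (Lemma \ref{lem:Compactness}), and the generation property of $\mathcal{E}_{A}$ (Lemma \ref{lem:generate}); none of these uses completeness. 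In particular, for the present purpose I only need the existence/convergence statement and not the uniqueness conclusion of Theorem \ref{th:CVSCL}, whose appeal to Theorem \ref{th:eqSolEntropy}$(i)$ rests solely on the $L^1$-dissipativity and maximality already established in Proposition \ref{pro:propertiesGAF0}. Hence the scheme produces the required solution for every Riemann datum and completeness follows.
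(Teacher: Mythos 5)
Your proof is correct and takes essentially the same route as the paper: the paper's own proof of Lemma \ref{lem::t4} likewise obtains, for each Riemann datum in $Q$, the required $\mathcal G_A$-entropy solution directly from the construction of $\rho$ in the proof of Theorem \ref{th:CVSCL}. Your write-up is simply more explicit than the paper's two-line argument, spelling out the choice $F_0=\bar F_A$ (with $A_{F_0}=A$ via Lemma \ref{lem:AF0}), the Lipschitz initial datum realizing $\rho_0$, and the non-circularity of invoking Theorem \ref{th:CVSCL} before completeness is known --- details the paper leaves implicit.
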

\begin{proof}[Proof of Lemma \ref{lem::t4}]
Consider any $k=(k_L,k_R)\in Q$. In order to show the completeness of $\mathcal G_A$, we simply have to show the existence of a $\mathcal G_A$-entropy solution to (\ref{eq:SCL-strong}) with initial data $\rho_0=k_L1_{(-\infty,0)}+k_R1_{[0,+\infty)}$.
The existence of such a solution follows from the construction of the function $\rho$ in the proof of Theorem \ref{th:CVSCL}. This insures that $\mathcal G_A$ is complete and ends the proof.
\end{proof}}

\section{Proof of Theorem \ref{th:main} and Theorem \ref{th:main2} using numerical schemes}\label{sec:5}
We are now able to give the proof of Theorem \ref{th:main2}.
\begin{proof}[Proof of Theorem \ref{th:main2}]
Fix $\Delta := (\Delta t, \Delta x)$ satisfying the CFL condition \eqref{eq:CFLSCL} and \eqref{eq:CFL-bis}.
Denote by $(u^n_j)_{n \in \mathbb{N}, j \in \mathbb{Z}}$ the solution of the scheme \eqref{eq:scheme-HJ}. {Recall that}
$$u_{\Delta}(t,x) := \sum_{n \in \mathbb{N}} \ind_{[t_n, t_{n+1})}(t) \ind_{[x_j, x_{j+1})}(x) \left[ u_j^n + \frac{u_j^{n+1} - u_j^n}{\Delta x} (x-x_n) \right].$$
Then, by construction (see \eqref{eq:DefpDelta}), for any $\Delta$,
$$ (u_\Delta)_x = p_\Delta$$
where $p_\Delta$ is the solution of the scheme \eqref{eq:scheme-SCL} with $\rho_0$ as initial datum.
Let $\phi \in {C}^1_c([0, + \infty) \times \mathbb{R})$.
Then we have
$$\iint u_\Delta \phi_x \d t \d x = -\iint p_\Delta \phi \d t \d x.$$
Using Theorem \ref{th:CVHJ}, we know that the scheme \eqref{eq:scheme-HJ} with $u_0$ as initial datum converges locally uniformly to $u$ the unique weak viscosity solution to \eqref{eq:HJ}.
Furthermore, $\phi_x \in {C}^0_c([0, + \infty) \times \mathbb{R})$ so we can pass to the limit in the left-hand side as $\Delta \longrightarrow (0,0)$ satisfying the CFL condition to get
$$\iint u_\Delta \phi_x \d t \d x \longrightarrow \iint u \phi_x \d t \d x.$$
On the other hand, using Theorem \ref{th:CVSCL}, we get that $p_\Delta$ converges a.e. to $\rho$ the unique solution of \eqref{eq:SCL} in the sense of Definition \ref{def:SCLSolTrace}. Also, thanks to Lemma \ref{lem:PBorne}, we know that $(p_\Delta)_\Delta$ is uniformly bounded. By dominated convergence, we also pass to the limit in the right-hand side and get that, for any test function $\phi \in {C}^1_c([0, + \infty) \times \mathbb{R})$,
$$\iint u \phi_x \d t \d x = -\iint \rho \phi \d t \d x.$$
This gives the desired result.
\end{proof}
The proof of Theorem \ref{th:main} can be obtained exactly in the same way.\\

\section{An alternative proof of Theorem \ref{th:main} using semi-algebraic functions}\label{sec:6}

Let $u$ be the viscosity solution of \eqref{eq:HJstrong} {for $A\in [H_0,0]$} and  $\rho$ be defined by
\begin{equation}\label{def.rho}
\rho(t,x):=  u_x(t,x).
\end{equation}
We would like to give a more direct proof of Theorem \ref{th:main} and show that $\rho$ is an entropy solution of \eqref{eq:SCL-strong}.
{It is easy to check that $\rho$ is already an entropy solution outside $\left\{x=0\right\}$ (see for instance \cite{CP20, KR02}).
We then focus on the junction condition at $x=0$.}
In all this section, we assume that $H_{L,R}$ satisfy \eqref{ass:H}. {We denote by $\rho_L$ and $\rho_R$ the strong traces of $\rho$ at $0$ (see \cite{panov-trace} and (\ref{eq::g8})): $\rho_L:=\gamma_L\rho$ and $\rho_R:=\gamma_R\rho$.}

We first note that formally
\begin{equation}\label{un}
H_L(u_x(t,0^-))=H_R(u_x(t,0^+))\qquad \forall  t>0.
\end{equation}
Equality \eqref{un} can be rewritten {rigorously} as
\begin{lemma}[The Rankine-Hugoniot condition]\label{lem.unbis}
We have
\begin{equation}\label{unbis}
H_L(\rho_L(t))= H_R(\rho_R(t)) \qquad  \text{a.e.}\ t>0.
\end{equation}
This common value is equal to $ -u_t(t,0)$.
\end{lemma}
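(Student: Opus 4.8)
The plan is to read off the Rankine--Hugoniot identity from the one-sided traces of the two equations, the only delicate point being that the (merely distributional) time derivative of the trace $t\mapsto u(t,0)$ coincides with the strong trace of the time derivative $u_t$.

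Since $u_0$ is Lipschitz and the $H_\alpha$ satisfy \eqref{ass:H}, the viscosity solution $u$ is Lipschitz in $(t,x)$, so $u_t,u_x\in L^\infty$ and the equations $u_t=-H_L(u_x)$ (for $x<0$) and $u_t=-H_R(u_x)$ (for $x>0$) hold a.e. First I would record that $u_t$ admits strong one-sided traces at $x=0$. Indeed $\rho=u_x$ has the strong traces $\rho_L,\rho_R$ by \eqref{eq::g8} (Panov \cite{panov-trace}), and since $u_x$ is bounded and $H_\alpha$ is Lipschitz on the relevant range, $H_L(u_x)$ (resp.\ $H_R(u_x)$) has the strong trace $H_L(\rho_L)$ (resp.\ $H_R(\rho_R)$) at $0^-$ (resp.\ $0^+$) in the sense of \eqref{eq::g8}. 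Using the equations, this gives $\gamma_L u_t=-H_L(\rho_L)$ and $\gamma_R u_t=-H_R(\rho_R)$.

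The core step is to identify these two traces with a single object. Set $v(t):=u(t,0)$, which is Lipschitz in $t$ by continuity of $u$, hence differentiable a.e. For $\phi\in C^\infty_c((0,\infty))$, integrating by parts in $t$ gives $\int u_t(t,x)\phi(t)\d t=-\int u(t,x)\phi'(t)\d t$ for a.e.\ fixed $x<0$. Averaging over $x\in(-h,0)$ and letting $h\to 0^+$, the left-hand side converges to $\int \gamma_L u_t(t)\,\phi(t)\d t$ by the strong-trace property (since $\frac1h\int_{-h}^0 u_t(t,x)\d x\to\gamma_L u_t$ in $L^1_{loc}(\d t)$), while the right-hand side converges to $-\int v(t)\phi'(t)\d t=\int v'(t)\phi(t)\d t$, using that $\frac1h\int_{-h}^0 u(t,x)\d x\to u(t,0)=v(t)$ uniformly on compact $t$-sets by continuity of $u$. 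As $\phi$ is arbitrary, $v'=\gamma_L u_t=-H_L(\rho_L)$ a.e. Repeating the argument on $x\in(0,h)$ yields $v'=\gamma_R u_t=-H_R(\rho_R)$ a.e.

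Combining the two identities gives $H_L(\rho_L(t))=-v'(t)=H_R(\rho_R(t))$ for a.e.\ $t>0$, which is \eqref{unbis}, and the common value is $-v'(t)=-\partial_t u(t,0)=-u_t(t,0)$, as claimed. I expect the main obstacle to be exactly the commutation in the core step: one must exchange the distributional derivative in time with the strong trace in space, which is why the argument is run against test functions and through the spatial averaging $\frac1h\int_{-h}^0$ rather than pointwise. Everything else follows from the Lipschitz regularity of $u$, the continuity of the $H_\alpha$, and the existence of strong traces for $u_x$.
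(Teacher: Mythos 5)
Your proof is correct and follows essentially the same route as the paper: integrate the a.e.-valid equation against a time test function over a thin strip $(-h,0)$ or $(0,h)$, integrate by parts in $t$, and pass to the limit using the continuity of $u$ for the $u$-term and the strong trace of $\rho$ (plus Lipschitz continuity of $H_\alpha$) for the flux term. The only difference is organizational—you first record that $u_t$ has strong traces $-H_\alpha(\rho_\alpha)$ and then identify them with $\frac{d}{dt}u(t,0)$, whereas the paper does both in one computation—but the ingredients and mechanism are identical.
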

Equality \eqref{unbis} makes sense since {$\rho_\alpha(t,\cdot)$ (and then also $H_\alpha(\rho(t,\cdot)$) have  strong traces} at $x=0$.
Note also that equality \eqref{unbis} is nothing else the  Rankine-Hugoniot condition at $x=0$.

\begin{proof}[Proof of Lemma \eqref{lem.unbis}] For any $\xi\in C^\infty_c((0,+\infty))$ and $h>0$ small, we have, after integrating the equation of $u$ which is satisfied a.e. (since $u$ is Lipschitz continuous)
$$
h^{-1} \int\int_{{(t,x)\in (0,+\infty)\times (0,h)}} \xi(t) H_R( u_x(t,x)) \ dx dt = h^{-1} \int_0^\infty \int_0^h \xi'(t) u(t,x) \ dx dt.
$$
By continuity of $u$, the right-hand side converges, as $h\to 0^+$, to $\int_0^\infty \xi'(t) u(t,0)dt$. The left-hand side can be rewritten as
$$
h^{-1} \int_0^\infty \int_0^h \xi(t) H_R( \rho(t,x))\ dx dt
$$
and converges to $\int_0^\infty  \xi(t) H_R( \rho_R(t))\ dt$ as $h\to 0^+$ (where {$\rho_R(t)$ is the strong trace of $\rho$ at $0^+$}).
This implies that
$$\int_0^\infty  \xi(t) H_R( \rho_R(t))\ dt=\int_0^\infty \xi'(t) u(t,0)dt.$$
In the same way, we have
$$\int_0^\infty  \xi(t) H_L( \rho_L(t))\ dt=\int_0^\infty \xi'(t) u(t,0)dt.$$
This shows \eqref{unbis}. Note in addition that, as $u$ is Lipschitz continuous,
$$
\int_0^\infty \xi(t)  u_t(t,0)dt = - \int_0^\infty \xi'(t) u(t,0)dt  = -\int_0^\infty  \xi(t) H_\alpha( \rho_\alpha(t))\ dt
$$
for $\alpha=L,R$, which proves that the common value in \eqref{unbis} is equal to $ - u_t(t,0)$.
\end{proof}

We continue by showing that the {traces of $\rho$ satisfy the first line in the second equivalent definition of  $\mathcal G_{A}$ in \eqref{eq:406}.}
\begin{lemma}[$\rho$ satisfies the first property of the germ $\mathcal G_{A}$]\label{lem:2} Assume that $u$ is a solution to \eqref{eq:HJstrong}. Then $\rho$ defined by \eqref{def.rho} satisfies
{$$
H_L(\rho_L(t))\ge  A \qquad \text{a.e.} \; t>0.
$$}
\end{lemma}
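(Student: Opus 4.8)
The plan is to transfer the desired bound $H_L(\rho_L(t))\ge A$ onto the time derivative of $u$ at the junction. By Lemma~\ref{lem.unbis} the common trace value satisfies $H_L(\rho_L(t))=-u_t(t,0)$ for a.e.\ $t>0$, where the map $m(t):=u(t,0)$ is Lipschitz (since $u$ is) and hence differentiable a.e. Thus the statement is equivalent to $m'(t)\le -A$ for a.e.\ $t>0$, and this is exactly the inequality I would extract from the \emph{subsolution} part of the junction condition, the point being that the flux-limiter always satisfies $\bar F_A\ge A$.

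The core step is to show that $m$ is a viscosity subsolution of the scalar ODE $m'+A=0$ on $(0,+\infty)$. Given $\psi\in C^1$ such that $m-\psi$ has a local maximum at some $t_0$, I would build a junction test function $\varphi\in C^1_\wedge(J_T)$ of the form $\varphi(t,x)=\psi(t)+\bar p_L^A\,x$ for $x\le 0$ and $\varphi(t,x)=\psi(t)+\bar p_R^A\,x$ for $x\ge 0$ (smoothed near $x=0$ if needed), where $(\bar p_L^A,\bar p_R^A)$ are the critical slopes from \eqref{eq:EA}, characterized by $H_L^-(\bar p_L^A)=A=H_R^+(\bar p_R^A)$. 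Since $\bar p_L^A\in[a_L,b_L]$ and $\bar p_R^A\in[b_R,c_R]$, one has $H_L^+(\bar p_L^A)=H_L(b_L)\le H_0\le A$ and $H_R^-(\bar p_R^A)=H_R(b_R)\le H_0\le A$, so that $\bar F_A(\bar p_L^A,\bar p_R^A)=\max\{A,H_L^+(\bar p_L^A),H_R^-(\bar p_R^A)\}=A$. Applying the strong subsolution inequality of Definition~\ref{defi:strongHJ} at the vertex then yields $\psi'(t_0)+A\le 0$, which is precisely the subsolution property for $m'+A=0$.

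The hard part is to guarantee that such a $\varphi$ actually touches $u^*=u$ from above at $(t_0,0)$, i.e.\ that $u-\varphi$ has a local maximum there and not only along $\{x=0\}$. On the two branches this amounts to the one-sided gradient control that critical slopes provide for flux-limited subsolutions, and it is exactly the content of the ``reduced set of test functions'' mechanism of Imbert--Monneau \cite{imbert-monneau}, which permits testing the vertex condition with the extremal slopes $(\bar p_L^A,\bar p_R^A)$ without destroying the local maximum. I would invoke that reduction rather than reprove it; it is the one genuinely nontrivial ingredient and the place where the flux-limiter structure is truly used.

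Finally, I would conclude by a standard fact: a Lipschitz viscosity subsolution $m$ of $m'+A=0$ satisfies $m'(t)+A\le 0$ at \emph{every} point where $m$ is differentiable, because there the derivative $m'(t)$ is the unique element of the superdifferential $D^+m(t)$, against which the subsolution property holds. Hence $m'\le -A$ a.e., and combining with $H_L(\rho_L)=-m'$ from Lemma~\ref{lem.unbis} gives $H_L(\rho_L(t))\ge A$ for a.e.\ $t>0$, which is the claim.
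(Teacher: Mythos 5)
Your overall architecture is sound and even streamlines the paper's argument at both ends: reducing the claim to $m'(t)\le -A$ a.e.\ via Lemma \ref{lem.unbis}, and concluding from the viscosity subsolution property of $m$ at points of differentiability, are both correct. (The paper instead cites \cite[Theorem 2.11]{imbert-monneau} for the subsolution property of $w(t)=u(t,0)$, converts it into the monotonicity estimate $w(t+\tau)-w(t)\le -A\tau$, and then integrates the PDE over thin rectangles $[t,t+\tau]\times[-h,0]$, using the strong trace; this bypasses Lemma \ref{lem.unbis} and any pointwise differentiation.) The problem lies entirely in your central step, namely the proof that $m$ is a viscosity subsolution of $m'+A\le 0$.

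The gap is this: the test function $\varphi(t,x)=\psi(t)+\bar p_L^A\,x\,\ind_{\{x\le 0\}}+\bar p_R^A\,x\,\ind_{\{x\ge 0\}}$ does not in general touch $u$ from above in a two-dimensional neighborhood of $(t_0,0)$, and the tool you invoke to repair this --- the ``reduced set of test functions'' theorem of \cite{imbert-monneau} --- cannot do that job. That theorem goes in the opposite direction: it asserts that to \emph{verify} the flux-limited subsolution inequality at a genuine space-time local maximum it suffices to use test functions with critical slopes; it says nothing about promoting a maximum of $u-\varphi$ \emph{along the line} $\{x=0\}$ to a local maximum in $(t,x)$. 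Concretely, for $A<0$ one has $\bar p_L^A>a_L$ while the left gradient of $u$ may equal $a_L$, so $u$ can lie strictly above your $\varphi$ at points $x<0$ arbitrarily close to the junction, and no reduction of the class of test functions rescues the construction. The fix is elementary and stays within your scheme: replace the critical slopes by the extreme slopes $(a_L,c_R)$. Since the gradient of $u(t,\cdot)$ stays in the box $Q$ (as recorded in the paper after \eqref{ass:F0}), one has $u(t,x)\le u(t,0)+a_Lx$ for $x\le 0$ and $u(t,x)\le u(t,0)+c_Rx$ for $x\ge 0$, so $\varphi(t,x)=\psi(t)+a_Lx\,\ind_{\{x\le 0\}}+c_Rx\,\ind_{\{x\ge 0\}}$ genuinely touches $u$ from above at $(t_0,0)$; moreover
\begin{equation*}
\bar F_A(a_L,c_R)=\max\{A,\,H_L^+(a_L),\,H_R^-(c_R)\}=\max\{A,\,\min H_L,\,\min H_R\}=A
\end{equation*}
because $A\ge H_0$, so Definition \ref{defi:strongHJ} yields $\psi'(t_0)+A\le 0$ as desired. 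Alternatively, the subsolution property of the trace is exactly the statement of \cite[Theorem 2.11]{imbert-monneau}, which is what the paper cites; invoking that result, rather than the reduction theorem, would also close the gap, and the rest of your proof then goes through.
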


{
\begin{proof} We know by \cite[Theorem 2.11]{imbert-monneau} that $w(t):=u(t,0)$ is a viscosity subsolution of $w_t+ A\leq 0$. Thus it satisfies $w(t+\tau)-w(t)\leq -A \tau$ for any $t,\tau> 0$. Let us integrate the equation satisfied by $u$ against the test function $(s,y)\to (\tau h)^{-1}{\bf 1}_{[t,t+\tau]\times [-h,0]}(s,y)$ for $\tau,h>0$. We have, by Lipschitz continuity of $u$,
\begin{align*}
0 & = (\tau h)^{-1}\int_t^{t+\tau}\int_{-h}^0(u_t(s,y) +H_L(u_x(s,y)))dyds \\
\qquad & = (\tau h)^{-1}\int_{-h}^0  (u(t+\tau,y)-u(t,y))dy +(\tau h)^{-1}\int_t^{t+\tau}\int_{-h}^0 H_L(\rho(s,y))dyds \\
\qquad & \leq   \tau^{-1}(u(t+\tau,0) -u(t,0))+C\frac{h}{\tau}  +(\tau h)^{-1}\int_t^{t+\tau}\int_{-h}^0H_L(\rho(s,y))dyds \\
\qquad & \leq -A+C\frac{h}{\tau}  +(\tau h)^{-1}\int_t^{t+\tau}\int_{-h}^0H_L(\rho(s,y))dyds.
\end{align*}
We let $h\to 0^+$ and obtain
$$
\int_t^{t+\tau}H_L(\rho_L(s))ds \ge  A \tau,
$$
which gives the claim.
\end{proof}}

\begin{lemma}[The traces are in the germ]\label{lem.rhoinGcond} Assume that the  Lipschitz continuous viscosity solution $u$ to \eqref{eq:HJstrong} satisfies
\begin{equation}\label{hyphyp}
\text{for a.e. $t\in {(0,T)}$, $u(t,\cdot)$ has a left derivative $u_x(t,0^-)$ and a right derivative $u_x(t,0^+)$ at $0$.}
\end{equation}
Then
\begin{equation}\label{eq::g9}
(\rho_L(t), \rho_R(t))=( u_x(t, 0^-), u_x(t, 0^+))\in \mathcal G_{A}\quad \mbox{for a.e.}\quad t\geq 0.
\end{equation}
\end{lemma}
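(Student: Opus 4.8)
The plan is to combine the two relations already established---the Rankine--Hugoniot identity of Lemma~\ref{lem.unbis} and the lower bound of Lemma~\ref{lem:2}---with the viscosity \emph{super}solution property at the junction, tested against a single well-chosen piecewise linear function. Writing $p_L:=\rho_L(t)=u_x(t,0^-)$, $p_R:=\rho_R(t)=u_x(t,0^+)$ and $\lambda:=H_L(p_L)=H_R(p_R)=-u_t(t,0)\ge A$ (these being exactly Lemmas~\ref{lem.unbis} and~\ref{lem:2}, valid for a.e.\ $t$), I first record the automatic inequality $\bar F_A(p_L,p_R)=\max\{A,H_L^+(p_L),H_R^-(p_R)\}\le \lambda$, which holds since $H_L^+\le H_L$, $H_R^-\le H_R$ and $A\le\lambda$. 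In view of the second description of $\mathcal G_A$ in~\eqref{eq:406}, membership $(p_L,p_R)\in\mathcal G_A$ is therefore equivalent to the reverse inequality $\lambda\le\bar F_A(p_L,p_R)$, i.e.\ to excluding the ``bad case'' $\lambda>A$, $H_L^+(p_L)<\lambda$ and $H_R^-(p_R)<\lambda$; the last two conditions mean precisely $p_L<b_L$ and $p_R>b_R$, since $H_L^+=H_L$ only on $[b_L,c_L]$ and $H_R^-=H_R$ only on $[a_R,b_R]$.

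So suppose, for a.e.\ $t=t_0$ where~\eqref{hyphyp} and the previous lemmas apply, that the bad case holds. I would use the critical slopes $(\bar p_L^A,\bar p_R^A)$ of~\eqref{eq:EA}, characterised by $H_L(\bar p_L^A)=H_L^-(\bar p_L^A)=A=H_R^+(\bar p_R^A)=H_R(\bar p_R^A)$, so that $\bar p_L^A\le b_L$, $\bar p_R^A\ge b_R$ and, crucially, $H_L(\bar p_L^A)=H_R(\bar p_R^A)=\bar F_A(\bar p_L^A,\bar p_R^A)=A$ (the last equality because $H_L^+(\bar p_L^A)=\min H_L\le A$ and $H_R^-(\bar p_R^A)=\min H_R\le A$). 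The bad-case inequalities $H_L(p_L)=\lambda>A=H_L(\bar p_L^A)$ with $p_L,\bar p_L^A\le b_L$, and $H_R(p_R)=\lambda>A=H_R(\bar p_R^A)$ with $p_R,\bar p_R^A\ge b_R$, give the \emph{strict} orderings $p_L<\bar p_L^A$ and $p_R>\bar p_R^A$ by strict monotonicity of $H_L$ on $[a_L,b_L]$ and of $H_R$ on $[b_R,c_R]$. Setting $\phi(x):=\bar p_L^A\,x\,\ind_{\{x<0\}}+\bar p_R^A\,x\,\ind_{\{x>0\}}$, these strict orderings show that on the slice $t=t_0$ the affine-in-$x$ function $x\mapsto u(t_0,0)+\phi(x)$ touches $u(t_0,\cdot)$ from below at $x=0$, with strict inequality for $0<|x|$ small.

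To promote this to a space--time contact I would consider the test function $\varphi(t,x):=u(t_0,0)-\lambda(t-t_0)+\phi(x)\in C^1_\wedge(J_T)$ and argue that $u-\varphi$ has a local minimum near $(t_0,0)$. The decisive structural point is that $\phi$ is affine with the critical slopes, so wherever the minimum is located the supersolution inequality of Definition~\ref{defi:strongHJ} yields the \emph{same} conclusion: at an interior left (resp.\ right) point it gives $-\lambda+H_L(\bar p_L^A)\ge0$ (resp.\ $-\lambda+H_R(\bar p_R^A)\ge0$), and at the junction it gives $-\lambda+\bar F_A(\bar p_L^A,\bar p_R^A)\ge0$; in all three cases this reads $\lambda\le A$, contradicting $\lambda>A$.

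The main obstacle is exactly the localisation of this minimum in the \emph{time} direction: the existence of one-sided space derivatives at $t_0$ and the differentiability of $t\mapsto w(t):=u(t,0)$ at $t_0$ (with $w'(t_0)=-\lambda$) give contact only on the slice $t=t_0$, and do not by themselves produce a two-sided local minimum in $t$. I would remove this difficulty by perturbing the time slope: fix $\eta\in(0,\lambda-A)$ and work on a forward box $[t_0,t_0+\tau]\times[-r,r]$ with time slope $-(\lambda+\eta)$ and on a backward box $[t_0-\tau,t_0]\times[-r,r]$ with time slope $-(\lambda-\eta)$. For $r$ small the strict spatial ordering keeps $u-\varphi$ bounded below away from its value at $x=0$ on $\{|x|=r\}$, while the expansion $w(t_0\pm\tau)-w(t_0)=\mp\lambda\tau+o(\tau)$ forces, after the $\pm\eta$ shift and for $\tau$ small, the minimum over each half-box into its interior; the argument above then applies and yields $\lambda\le A+\eta<\lambda$, the desired contradiction. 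Combined with Lemmas~\ref{lem.unbis} and~\ref{lem:2}, this establishes all three defining conditions of $\mathcal G_A$ in~\eqref{eq:406}. Since everything is carried out at a.e.\ $t_0$, and since under~\eqref{hyphyp} the one-sided derivatives coincide with the strong traces $\rho_L,\rho_R$, the a.e.\ conclusion~\eqref{eq::g9} follows.
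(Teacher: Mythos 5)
Your reduction of the problem is correct: granting Lemmas~\ref{lem.unbis} and~\ref{lem:2}, membership in $\mathcal G_A$ is indeed equivalent to excluding the bad case $\lambda>A$, $H_L^+(p_L)<\lambda$, $H_R^-(p_R)<\lambda$; the strict orderings $p_L<\bar p_L^A$, $p_R>\bar p_R^A$ and the identities $H_L(\bar p_L^A)=H_R(\bar p_R^A)=\bar F_A(\bar p_L^A,\bar p_R^A)=A$ are right; and \emph{if} a usable minimum of $u-\varphi$ existed, the supersolution inequality would give the contradiction $\lambda\le A\pm\eta$ exactly as you say. The genuine gap is the localisation of that minimum, and as written it fails on both boxes. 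On the forward box with slope $-(\lambda+\eta)$ you have, along $\{x=0\}$, $u-\varphi^+=\eta(t-t_0)+o(t-t_0)\ge 0$, and $u-\varphi^+\ge 0$ on the initial slice and on the lateral sides; hence the minimum is $0$ and is attained at the \emph{initial corner} $(t_0,0)$. A minimum which is local only for a forward-in-time one-sided neighbourhood yields no viscosity inequality (otherwise initial conditions would be redundant for supersolutions); and the profile $u(t,x)=u(t_0,0)-\lambda(t-t_0)+p_Lx\ind_{\{x<0\}}+p_Rx\ind_{\{x>0\}}$ satisfies every constraint your localisation uses while having its minimum exactly at that corner, so no refinement of the same estimates can push it inside. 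On the backward box with slope $-(\lambda-\eta)$, the expansion of $w$ controls $u-\varphi^-$ only on the segment $\{x=0\}$ and the terminal slice $\{t=t_0\}$; nothing prevents the minimum from being attained, with a negative value, on the \emph{initial} slice $\{t=t_0-\tau\}$ (a dip of $u$ below $\varphi^-$ at time $t_0-\tau$ away from $x=0$ can rise and disappear by time $t_0$, consistently with the equation), and at such a point no supersolution inequality is available either. So the claim that the minimum is ``forced into the interior'' of each half-box is unjustified, and false in general.

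The argument can be repaired by flipping the sign of the time perturbation and using only the forward box: take $\varphi(t,x)=u(t_0,0)-(\lambda-\eta)(t-t_0)+\phi(x)$ on $[t_0,t_0+\tau]\times[-r,r]$. Then along $\{x=0\}$ one has $u-\varphi=-\eta(t-t_0)+o(t-t_0)<0$ for $t>t_0$, so the minimum is negative and therefore cannot lie on the initial slice, the lateral sides, or the segment $\{x=0,\,t=t_0\}$; it lies either in the interior (where your contradiction $-(\lambda-\eta)+A\ge 0$ applies) or on the terminal slice, which is also usable because supersolution inequalities persist at a terminal time for backward-in-time one-sided minima (perturb the test function by $-\epsilon/(t_0+\tau-t)$ and pass to the limit). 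Note this is genuinely different from the paper's route, which avoids test functions altogether: it builds an explicit piecewise-linear \emph{subsolution} at level $\lambda-\ep$ (slopes $k_L^\ep>\rho_L(t)$, $k_R^\ep<\rho_R(t)$, so that it sits below $u(t,\cdot)$ near $x=0$), propagates the ordering forward in time by comparison and finite speed of propagation, and then compares time derivatives at $(t,0)$; information there only ever flows forward in time, which is precisely what makes that argument sound. Finally, a minor point: the identity $(\rho_L(t),\rho_R(t))=(u_x(t,0^-),u_x(t,0^+))$, which you invoke in the last sentence, is part of the statement and is not automatic; it requires integrating the strong-trace convergence \eqref{eq::g8} in $x$ and using \eqref{hyphyp} (this is Step~1 of the paper's proof).
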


\begin{remark} The forthcoming paper \cite{Monneau2023} shows that \eqref{hyphyp} actually holds in a very general set-up (and in particular under our standing conditions). Below we prove it for semi-algebraic data only by using a representation formula.
\end{remark}

\begin{proof}
{\noindent {\bf Step 1: proof of equality in (\ref{eq::g9})}}\\
Using the definition of {strong} traces, we have
\begin{equation}\label{vasseur}
\text{ess-}\lim_{x\to 0^+} \int_0^T |\rho_L(t)-\rho(t,-x)|+ |\rho_R(t)-\rho(t,x)|\ dt =0.
\end{equation}
This implies that
$$
\text{ess-}\lim_{x\to 0^+} \int_0^T |\rho_L(t)- u_x(t,-x)|+|\rho_R(t)-u_x(t,x)|\ dt =0.
$$
Therefore, for any $\ep>0$ there exists $x_\ep>0$ such that
$$
\int_0^T\left| \rho_L(t)- u_x(t,-x)\right|+ \left|\rho_R(t)-u_x(t,x) \right|\ dt \leq \ep \quad \mbox{for a.e.}\quad x\in (0,x_\ep).
$$
Thus, after integration in space, we get
$$
\int_0^T |\rho_L(t)x+u(t,-x)-u(t,0)|+|\rho_R(t)x-u(t,x)+u(t,0)|dt \leq \ep x\quad \mbox{for all}\quad x\in (0,x_\ep).
$$
Using that $u$ is Lipschitz continuous, assumption (\ref{hyphyp}) and Lebesgue Theorem, we get therefore
$$
\int_0^T \left| \rho_L(t)- \lim_{x\to 0^+} \frac{u(t,-x)- u(t,0)}{-x}\right|+  \int_0^T\left|\rho_R(t)- \lim_{x\to 0^+} \frac{u(t,x)-u(t,0)}{x} \right|\ dt =0.
$$
This means that, for a.e. $t$,
\begin{equation}\label{eq:pente}
  {u_x(t,0^-)=}  \lim_{x\to 0^+} \frac{u(t,-x)- u(t,0)}{-x}=\rho_L(t)\qquad \text{and} \qquad  {u_x(t,0^+)=}\lim_{x\to 0^+} \frac{u(t,x)-u(t,0)}{x}= \rho_R(t).
\end{equation}
{\noindent {\bf Step 2: proof of the inclusion in (\ref{eq::g9})}\\
We already know that $-u_t(t,0)= H_L(\rho_L(t))= H_R(\rho_R(t))\ge A$ for a.e. time $t$ (see Lemma \ref{lem.unbis} and Lemma \ref{lem:2}).
Let us fix such a time $t>0$.
Our aim is to check that $(\rho_L(t),\rho_R(t))  \in \mathcal G_{A}$. We argue by contradiction, assuming that
$$H_L(\rho_L(t))> A,\quad H_L^+(\rho_L(t))<H_L(\rho_L(t))\quad  {\rm and}\quad H_R^-(\rho_R(t))<H_R(\rho_R(t)).$$
Let us fix $\ep>0$ so small that $\lambda:=H_L(\rho_L(t))-\ep>A$. We then choose $k_L^\ep$ as the smallest  solution to $H_L(k_L^\ep):=\lambda$ and $k_R^\ep$ as the largest solution to $H_R(k_R^\ep):= \lambda$. As $H_L$ and $H_R$ are convex and $H_L^+(\rho_L(t))<H_L(\rho_L(t))$ and $H_R^-(\rho_R(t))<H_R(\rho_R(t))$, we have $k_L^\ep>\rho_L(t)$ and $k_R^\ep<\rho_R(t)$. Moreover, $H_L^+(k_L^\ep)= \min H_L$, while $H_R^-(k_R^\ep)= \min H_R$. Let us define the map $w:\R\times \R\to \R$ by
$$
w(s, x)= u(t,0) + \left\{ \begin{array}{ll}
k_L^\ep x- \lambda s& \text{if}\; x\leq 0\\
k_R^\ep x -\lambda s& \text{if}\; x\geq 0
\end{array}\right.
$$
Then $w$ is a test function which is a subsolution of the Hamilton-Jacobi equation \eqref{eq:HJstrong} because,
$$
H_L(k_L^\ep)= H_R(k_R^\ep) =\lambda = - w_s$$
and using $A\in [H_0,0]$, we get
$$\max \{A, H_L^+(k_L^\ep), H_R^-(k_R^\ep)\}=  \max \{A, \min H_L, \min H_R\} = A\le \lambda=-w_s.
$$
Moreover, by \eqref{eq:pente} and the fact that $k_L^\ep>\rho_L(t)$ and $k_R^\ep<\rho_R(t)$, we get that $u(t,x)\geq w(0,x)$ if $|x|$ is small enough. Thus, by finite speed of propagation and comparison, we have $u(t+h, 0)\geq w(h,0)$ for $h>0$ small enough.
Therefore
$$
-H_L(\rho_L(t))=  u_t(t,0) \geq  w_s(0,0) =-\lambda =-H_L(\rho_L(t))+\ep,
$$
which contradicts our assumption. This proves that $(\rho_L(t),\rho_R(t))\in \mathcal G_{A}$.}
\end{proof}

We are now ready to give {an} alternative proof of Theorem \ref{th:main}.
This proof relies on semi-algebraic functions. For the reader's convenience, we recall below some {useful} facts about semi-algebraic sets and functions and we refer to \cite{coste} for a complete reference (see also \cite{coste2}).
\begin{remark}
We recall that a basic semi-algebraic set is a set defined by {a finite number of} polynomial equalities and polynomial inequalities, and a semi-algebraic set is a finite union of basic semi-algebraic sets.
The class $\mathcal{SA}_n$ of semi-algebraic subsets of $\R^n$ has the following properties:
\begin{itemize}
\item All algebraic subsets of $\R^n$ (i.e., zeros of a finite number of polynomial equalities) are in $\mathcal{SA}_n$.

\item $\mathcal{SA}_n$ is stable by finite intersection, finite union and taking complement.

\item The cartesian products of semi-algebraic sets are semi-algebraic.

\item  The Tarski-Seidenberg Theorem says that the image by the canonical projection $p:\R^{n+1}\to \R^n$ of a semi-algebraic set of $\R^{n+1}$ is a semi-algebraic set of $\R^n$.

\item By \cite[Proposition 1.12]{coste},   the closure and the interior of a semi-algebraic subset of $\R^n$ are semi-algebraic.

\item By definition, a semi-algebraic map is a map defined on a semi-algebraic set {and whose graph is} a semi-algebraic set.

\item  An important property of semi-algebraic functions is given in \cite[Theorem 2.1]{coste} (Monotonicity Theorem): If $f : (a, b) \to \R$ is semi-algebraic, then there exists a finite subdivision $a = a_0 < a_1 < \dots < a_k = b$ such
that, on each interval $(a_i, a_{i+1})$, $f$ is continuous and either constant or strictly monotone.

\item An {important} consequence of the monotonicity Theorem is given in \cite[Lemma 6.1]{coste}: {left and right derivatives of a continuous semi-algebraic} map on an open interval exist ({with values} in $\R\cup \{\pm \infty\}$).
\end{itemize}
\end{remark}

\begin{proof}[Sketch of proof of Theorem \ref{th:main}] { As Theorem \ref{th:main} has already been established by using numerical schemes, we only sketch the proof. Recall that $u$ is a viscosity solution of \eqref{eq:HJstrong}.} We have  to prove that $\rho:=  u_x$ is an {entropy} solution to \eqref{eq:SCL-strong}.
{Following  for instance \cite{CP20, KR02}, we know that} $\rho$ solves the equation in $\{x\neq 0\}$.
{It remains to check the junction condition at $x=0$.
From Lemma \ref{lem.rhoinGcond}, we just need to show that the left and right derivatives $u_x(t,0^-)$ and $u_x(t,0^+)$ are well defined for a.e. time $t$. To do so we will use a representation formula. Using this representation formula, we show the existence of these derivatives when the initial datum and hamiltonians are semi-algebraic, then conclude by an approximation argument. \\

\noindent {\bf Step 1: representation formula of the solution}\\
In order to use a representation formula, we reverse the time direction of trajectories, and for this reason, we set $\hat u(t,x)= u(T-t,x)$ and}
$$
L_\alpha(q)= \sup_{p\in \R} \left(- qp-H_\alpha(p)\right)
$$
where (with the same notation), we denote by $H_\alpha:\R\to \R$ a $C^1$, strictly convex and superlinear extension of $H_\alpha$ from the interval $[a_\alpha,c_\alpha]$ to the whole line $\R$,  for $\alpha=L,R$. This implies that $L_\alpha:\R\to \R$ is also $C^1$, strictly convex and superlinear. Let us now define
$$L(x,q):=\left\{\begin{array}{lll}
L_L(q)&{\rm if}&x <0\\
-A&{\rm if}&x=0\\
L_R(q)&{\rm if}&x >0
\end{array}
\right.
$$
Following \cite[Proposition 6.3]{imbert-monneau}, {for $t_0\le T$,} we have
$$
\hat u(t_0,x_0) =\inf_{\gamma(t_0)=x_0} \int_{t_0}^T L(\gamma(t),\dot \gamma(t)) \; dt + u_0(\gamma(T)),
$$
where the infimum is taken over the trajectories $\gamma\in H^1([t_0,T],\R)$.\medskip

If $\hat \gamma$ is optimal for $x_0$, then $\hat \gamma$ is a straight-line on each interval where it does not vanish (by optimality conditions {using $L_\alpha$ strictly convex}). As a consequence, the minimization problem boils down to minimize {for $t_0<T$ and if, for instance $x_0<0$}:
\begin{align}
{\hat u}(t_0,x_0) = \min & \Bigl\{ \min_{y\leq 0} (T-t_0)L_L\left(\frac{y-x_0}{T-t_0}\right)+u_0(y),\notag \\
& \min_{t_0< \tau_1\leq \tau_2{<} T, y\geq 0} (\tau_1-t_0) L_L\left(\frac{0-x_0}{\tau_1-t_0}\right) -{A}(\tau_2-\tau_1) + (T-\tau_2)L_R\left(\frac{y-0}{T-\tau_2}\right)+ u_0(y), \notag\\
& \min_{t_0< \tau_1\leq \tau_2{<} T, y\leq 0} (\tau_1-t_0) L_L\left(\frac{0-x_0}{\tau_1-t_0}\right) - {A}(\tau_2-\tau_1) + (T-\tau_2)L_L\left(\frac{y-0}{T-\tau_2}\right)+ {u_0(y)}\Bigr\}\notag \\
= \min & \{f_1(x_0), f_2(x_0), f_3(x_0)\}, \label{repu}
\end{align}
where $f_1$ corresponds to trajectories ending at $y\leq 0$ while $f_2$ (resp. $f_3$) corresponds to trajectories ending at $y\geq 0$ (resp. $y\leq 0$) and remaining in $x=0$ during the time interval $[\tau_1,\tau_2]$.
{Notice that (\ref{repu}) is still true for $x_0=0$, with each minimum replaced by an infimum.}
\medskip

\noindent {\bf Step 2: argument for  semi-algebraic data}\\
Here we assume that the data ($L_R$, $L_L$ and $u_0$) are semi-algebraic. We claim that the map $\hat u(t_0,\cdot)$ given by \eqref{repu} is also semi-algebraic. Let us mention that in the case of analytic data, Trélat proved in \cite{Trelat1,Trelat2} that the solution to the Hamilton-Jacobi equation is subanalytic.
%
%
%
To prove our claim,  let us show for instance that $f_2$ is semi-algebraic. Let us define the semi-algebraic set $A_2$ by
\begin{align*}
A_2:= & \Bigl\{ (t_0,x_0, \tau_1,\tau_2,y,z,u,v)\in\R^8, \; 0< t_0{<} \tau_1\leq \tau_2< T, \; (\tau_1-t_0)u=x_0<0, \;  (T-\tau_2)v=y,\\
&\qquad
y\geq 0, \;
z\geq  (\tau_1-t_0) L_L(-u) -{ A}(\tau_2-\tau_1) + (T-\tau_2) L_R(v)+u_0(y)\Bigr\}.
\end{align*}
Let $C_2$ denotes the projection of $A_2$ onto the components $(t_0,x_0,z)$. Then, by the Tarski-Seidenberg Theorem, $C_2$ is a semi-algebraic set. Note that $C_2$ is also, by definition, the epigraph of $f_2$. Therefore the subgraph of $f_2$ (which is the closure of the complement of $C_2$) and its graph (intersection of the  epigraph and subgraph) are also semi-algebraic. Thus $f_2$ is a semi-algebraic map. By stability of semi-algebraic sets by finite union, we deduce that $\hat u(t_0,\cdot)$ is semi-algebraic on $(-\infty, 0)$. Moreover the function $\hat u(t_0,\cdot)$ is continuous at $x_0=0$. Hence $\hat u(t_0,\cdot)$
is also semi-algebraic on $(-\infty,0]$.
A similar argument shows that it is also semi-algebraic on $[0,\infty)$. Because the union of semi-algebraic sets is semi-algebraic, we deduce that $\hat u(t_0,\cdot)$ is semi-algebraic on $\R$. This implies that $u(t, \cdot)$ is semi-algebraic on $\R$ for any $t\in (0,T)$.

Using \cite[Lemma 6.1]{coste}, we then deduce that the limits
$$
u_x(t,0^-):=\lim_{h\to 0^-} \frac{u(t,h)-u(t,0)}{h}\quad \mbox{and}\quad u_x(t,0^+):=\lim_{h\to 0+} \frac{u(t,h)-u(t,0)}{h}
$$
exist at any time $t\in (0,T)$.
Therefore \eqref{hyphyp} holds. We can then conclude by Lemma \ref{lem.rhoinGcond} that $(\rho_L(t),\rho_R(t))\in \mathcal G_{A}$ for a.e. $t\in [0,T]$.\medskip

\noindent {\bf Step 3: argument in the general case}\\
One can check\footnote{This is the point where the proof is sketchy: the actual construction of $H^\varepsilon_L$, $H^\varepsilon_R$,  and $u_0^\varepsilon$ requires some work and has to be done with care.} that it is possible to approximate our data $H_L$, $H_R$, $u_0$ by semi-algebraic data $H^\varepsilon_L$, $H^\varepsilon_L$,  and $u_0^\varepsilon$ satisfying our standing assumptions (with locally uniform convexity for $H^\varepsilon_L$ and $H^\varepsilon_L$). By the previous step, we know that, if $u^\varepsilon$ is the solution to the HJ equation associated with these perturbed data, then $\rho^\varepsilon= u^\varepsilon_x$ solves the associated SCL. To conclude, we only need to pass to the limit: indeed, $u^\varepsilon$ converges locally uniformly to the solution $u$ of the HJ equation \eqref{eq:HJstrong}, while $\rho^\varepsilon$ converges in $L^1_{loc}$ to the entropy solution $\rho$ of \eqref{eq:SCL-strong}. We infer therefore that $u_x$, which is the weak limit of $u^\ep_x$, is equal to the solution $\rho$ of  \eqref{eq:SCL-strong}.
\end{proof}

\section{Appendix}\label{s6}
\subsection{Proof of the discrete entropy inequalities for the SCL numerical scheme}\label{s6a}
Before proving that the scheme satisfies the discrete entropy inequalities stated in Lemma \ref{lem:InegEntropiqueDiscrete}, we prove the following discrete entropy inequalities, independent of the test function.
\begin{lemma}[First discrete entropy inequalities]\label{lem:A1}
The numerical scheme \eqref{eq:scheme-SCL} satisfies the following discrete entropy inequalities: for all $n \in \mathbb{N}$, $j \in \mathbb{N}$ and $(k_L, k_R)\in Q$, set $k_\Delta = k_L \ind_{j \leq -1} + k_R \ind_{j \geq 0}$. Then
\begin{align*}
   &\frac{|p_{j+1/2}^{n+1} - k_\Delta | - |p_{j+1/2}^{n} - k_\Delta |}{\Delta t} + \frac{\Phi_{j+1}^n(k_\Delta) - \Phi_{j}^n(k_\Delta)}{\Delta x}
   &\leq \left\{ \begin{array}{ll}
  \displaystyle \frac{R_L}{\Delta x} & \textrm{ if }  j = -1\\
  \\
  \displaystyle \frac{R_R}{\Delta x} & \textrm{ if }  j = 0\\
  \\
   0 & \textrm{ otherwise }\\
   \end{array} \right.
\end{align*}
where
$$
   R_{\alpha} = \left|H_{\alpha}(k_\alpha) - F_0(k_L,k_R) \right|, \quad \alpha=L,R,
$$
and $\Phi^n_j(k_\Delta)$ is defined in \eqref{eq:301}.
\end{lemma}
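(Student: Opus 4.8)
The plan is to run the classical Crandall--Majda computation for monotone schemes, but with \emph{cell-dependent} reference states, so that the inconsistency of the junction flux $F_0$ with $H_{L,R}$ is captured exactly by the residuals $R_\alpha$. First I would rewrite the scheme \eqref{eq:scheme-SCL} in the compact conservative form
$$p^{n+1}_{j+1/2}=\mathcal F^n_j\bigl(p^n_{j-1/2},p^n_{j+1/2},p^n_{j+3/2}\bigr)=p^n_{j+1/2}-\lambda\bigl(\mathcal G_{j+1}(p^n_{j+1/2},p^n_{j+3/2})-\mathcal G_{j}(p^n_{j-1/2},p^n_{j+1/2})\bigr),$$
with $\lambda=\Delta t/\Delta x$ and interface flux $\mathcal G_i$ equal to $g^{H_L}$ for $i\le -1$, to $F_0$ for $i=0$, and to $g^{H_R}$ for $i\ge 1$. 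By Lemma \ref{lem:PBorne}, under the CFL condition the map $\mathcal F^n_j$ is nondecreasing in each of its three arguments; this monotonicity is the only structural input I use, together with the consistency identity $g^{H_\alpha}(k,k)=H_\alpha(k)$. Writing $k(\cdot)=k_\Delta$, I encode the reference state of the cell centered at $x_{j+1/2}$ as $K_{j+1/2}=k_L\mathbf 1_{j\le -1}+k_R\mathbf 1_{j\ge 0}$, which is exactly the value of $k_\Delta$ inserted in the corresponding slot of \eqref{eq:301}.

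Second, I would establish the one-cell estimate: with $k_l=K_{j-1/2}$, $k_c=K_{j+1/2}$, $k_r=K_{j+3/2}$ the reference values in the three cells touching cell $j+1/2$, and $(a,b,c)=(p^n_{j-1/2},p^n_{j+1/2},p^n_{j+3/2})$, I claim
$$|p^{n+1}_{j+1/2}-k_c|\le |p^n_{j+1/2}-k_c|-\lambda\bigl(\Phi^n_{j+1}(k_\Delta)-\Phi^n_j(k_\Delta)\bigr)+\lambda\,\bigl|\mathcal G_{j+1}(k_c,k_r)-\mathcal G_j(k_l,k_c)\bigr|.$$
To prove it I evaluate $\mathcal F^n_j$ at the componentwise maxima, $\Psi:=\mathcal F^n_j(a\vee k_l,b\vee k_c,c\vee k_r)$, and minima, $\psi:=\mathcal F^n_j(a\wedge k_l,b\wedge k_c,c\wedge k_r)$. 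Setting $m^\ast:=\mathcal F^n_j(k_l,k_c,k_r)$, monotonicity gives $\Psi\ge \max(p^{n+1}_{j+1/2},m^\ast)$ and $\psi\le \min(p^{n+1}_{j+1/2},m^\ast)$, hence $\Psi-\psi\ge |p^{n+1}_{j+1/2}-m^\ast|$. A direct expansion yields $\Psi-\psi=|b-k_c|-\lambda(\Phi^n_{j+1}(k_\Delta)-\Phi^n_j(k_\Delta))$, because once $k_l,k_c,k_r$ are placed in the slots dictated by the cells they belong to, the capped flux differences coincide on the nose with the quantities $\Phi^n_{j+1}(k_\Delta)$ and $\Phi^n_j(k_\Delta)$ of \eqref{eq:301}. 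Combining with the triangle inequality $|p^{n+1}_{j+1/2}-k_c|\le |p^{n+1}_{j+1/2}-m^\ast|+|m^\ast-k_c|$ and the identity $m^\ast-k_c=-\lambda(\mathcal G_{j+1}(k_c,k_r)-\mathcal G_j(k_l,k_c))$ gives the claim.

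Third, I would evaluate the defect $\bigl|\mathcal G_{j+1}(k_c,k_r)-\mathcal G_j(k_l,k_c)\bigr|$ case by case. Away from the junction ($j\le -2$ or $j\ge 1$) the three reference values coincide and both interface fluxes equal the same $g^{H_\alpha}$, so by consistency the defect is $|H_\alpha(k_\alpha)-H_\alpha(k_\alpha)|=0$, giving the clean inequality. For $j=-1$ one has $(k_l,k_c,k_r)=(k_L,k_L,k_R)$ with $\mathcal G_j=g^{H_L}$ and $\mathcal G_{j+1}=F_0$, so the defect is $|F_0(k_L,k_R)-H_L(k_L)|=R_L$; for $j=0$ one has $(k_l,k_c,k_r)=(k_L,k_R,k_R)$ with $\mathcal G_j=F_0$ and $\mathcal G_{j+1}=g^{H_R}$, giving $|H_R(k_R)-F_0(k_L,k_R)|=R_R$. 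Dividing the one-cell estimate by $\Delta t$ and using $\lambda/\Delta t=1/\Delta x$ produces exactly the stated inequalities.

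I expect the only delicate point to be the bookkeeping in the second step: one must verify that, with the reference states positioned according to the cell they belong to, the capped flux differences are \emph{literally} $\Phi^n_{j+1}(k_\Delta)$ and $\Phi^n_j(k_\Delta)$, and that the residual is genuinely the jump of the interface flux evaluated at the reference states, and so independent of the numerical unknowns $a,b,c$. Everything else is the standard monotone-scheme argument and uses no convexity of $H_\alpha$ beyond the monotonicity of $g^{H_\alpha}$ and $F_0$.
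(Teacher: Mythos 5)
Your proof is correct and follows essentially the same route as the paper: the Crandall--Majda monotonicity argument with capped states, in which the capped flux differences are exactly the quantities $\Phi^n_j(k_\Delta)$ of \eqref{eq:301} and the junction residuals $R_L$, $R_R$ arise from the inconsistency of $F_0$ with $H_L$, $H_R$ at the reference states. The only difference is cosmetic: you extract the residual via the triangle inequality through $m^\ast=\mathcal{F}^n_j(k_l,k_c,k_r)$, which lets you treat all cells uniformly, whereas the paper handles the two junction cells separately by correcting the reference value $k_\alpha$ with the signed parts $\left(H_\alpha(k_\alpha)-F_0(k_L,k_R)\right)^{\pm}$ of the same residual; both computations yield identical bounds.
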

\begin{proof}
Let $k \in \mathbb{R}$. Fix $n\in\mathbb{N}$, $j\in\mathbb{Z}$ such that $j\neq0,-1$. We have, using the monotonicity of the scheme,
\begin{align*}
|p_{j+1/2}^{n+1} - k | &= p_{j+1/2}^{n+1}\vee k - p_{j+1/2}^{n+1}\wedge k \\
&= \mathcal{F}_j^n( p_{j-1/2}^{n},p_{j+1/2}^{n},p_{j+3/2}^{n} ) \vee \mathcal{F}_j^n(k,k,k) - \mathcal{F}_j^n( p_{j-1/2}^{n},p_{j+1/2}^{n},p_{j+3/2}^{n} ) \wedge \mathcal{F}_j^n(k,k,k) \\
&\leq \mathcal{F}_j^n(p_{j-1/2}^n \vee k,p_{j+1/2}^n \vee k,p_{j+3/2}^n \vee k) - \mathcal{F}_j^n(p_{j-1/2}^n \wedge k,p_{j+1/2}^n \wedge k,p_{j+3/2}^n \wedge k)\\
&= |p_{j+1/2}^n - k| + \frac{\Delta t}{\Delta x} (\Phi_{j}^n(k) - \Phi_{j+1}^n(k) ).
 \end{align*}
This is exactly the third inequality. Now we treat the case $j = 0$. We have
$$\mathcal{F}_0^n(k_L,k_R,k_R) = k_R - \frac{\Delta t}{\Delta x} \left( H_R(k_R) - F_0(k_L,k_R) \right) $$
Then,
\begin{align*}
&k_R \geq \mathcal{F}_0^n(p_{j-1/2}^n \wedge k_L,p_{j+1/2}^n \wedge k_R,p_{j+3/2}^n \wedge k_R) - \frac{\Delta t}{\Delta x} \left( H_R(k_R) - F_0(k_L,k_R) \right)^- \\
&k_R \leq \mathcal{F}_0^n(p_{j-1/2}^n \vee k_L,p_{j+1/2}^n \vee k_R,p_{j+3/2}^n \vee k_R) + \frac{\Delta t}{\Delta x} \left( H_R(k_R) - F_0(k_L,k_R) \right)^+
\end{align*}
{where $a^\pm=\max(\pm a,0)$},
and we can adapt the previous argument in the following way
\begin{align*}
   |p_{1/2}^{n+1} - k_R | &= p_{1/2}^{n+1}\vee k_R - p_{1/2}^{n+1}\wedge k_R \\
   &= \mathcal{F}_0^n( p_{-1/2}^{n},p_{1/2}^{n},p_{3/2}^{n} ) \vee k_R - \mathcal{F}_0^n( p_{-1/2}^{n},p_{1/2}^{n},p_{3/2}^{n} ) \wedge k_R \\
   &\leq \mathcal{F}_0^n(p_{-1/2}^n \vee k_L,p_{1/2}^n \vee k_R,p_{3/2}^n \vee k_R) + \frac{\Delta t}{\Delta x} \left( H_R(k_R) - F_0(k_L,k_R) \right)^+ \\
   &- \mathcal{F}_0^n(p_{-1/2}^n \wedge k_L,p_{1/2}^n \wedge k_R,p_{3/2}^n \wedge k_R) + \frac{\Delta t}{\Delta x} \left( H_R(k_R) - F_0(k_L,k_R) \right)^- \\
   &= |p_{1/2}^n - k_R| + \frac{\Delta t}{\Delta x} (\Phi_{0}^n(k_\Delta) - \Phi_{1}^n(k_\Delta) + R_R).
\end{align*}
We conclude for the case $j=-1$ with the same procedure. This ends the proof of the lemma.
\end{proof}

We are now ready to prove Lemma \ref{lem:InegEntropiqueDiscrete}.

\begin{proof}[Proof of Lemma \ref{lem:InegEntropiqueDiscrete}]
Let $\phi \in {C}^{\infty}_c([0,T) \times \mathbb{R})$ be non-negative. For all  $j \in \mathbb{Z}$ and $n \in \mathbb{N}$, we define
$$ \phi_{j+1/2}^n := \frac{1}{\Delta x} \int_{x_j}^{x_{j+1}} \phi(t_n,x) \d x.$$
We also denote by $N := \inf\{ n\in \mathbb{N}, t_n > T \}$.
By Lemma \ref{lem:A1} and since $\phi^n_{j+\frac 12}\ge 0$ $\forall n,j$, we have
\begin{align*}
 & \sum_{j \in \mathbb{Z}} \left[ | p_{j+1/2}^{n+1} - k_\Delta| - |p_{j+1/2}^{n} - k_\Delta | \right]\Delta x \; \phi^{n+1}_{j+1/2}
    \\
   \le& - \sum_{j \neq 0,-1} \left[ \Phi^n_{j+1}(k_\Delta) - \Phi^n_j(k_\Delta) \right] \Delta t \; \phi^{n+1}_{j+1/2}
   - \left[ \Phi^n_{1}(k_\Delta) - \Phi^n_0(k_\Delta) - R_R \right] \Delta t \; \phi^{n+1}_{1/2} \\
   &- \left[ \Phi^n_{0}(k_\Delta) - \Phi^n_{-1}(k_\Delta) - R_L \right] \Delta t \; \phi^{n+1}_{-1/2}.
\end{align*}
Using the Abel's transformation and rearranging the terms, we get
\begin{align}
   &\sum_{j \in \mathbb{Z}} \left[ | p_{j+1/2}^{n+1} - k_\Delta| - |p_{j+1/2}^{n} - k_\Delta | \right]\Delta x \; \phi^{n+1}_{j+1/2}\label{eq:302} \\ \nonumber
   &\leq  \left[R_L \phi^{n+1}_{-1/2} + R_R \phi^{n+1}_{1/2}\right] \Delta t +  \sum_{j \in \mathbb{Z}}  \Phi^n_{j}(k_\Delta)\Delta t \; \left[\phi^{n+1}_{j+1/2}-\phi^{n+1}_{j-1/2} \right]
   =: \mathcal I_1 + \mathcal I_2.
\end{align}
First, we estimate $\mathcal I_2$.
\begin{align*}
\mathcal I_2 =& \sum_{j \in \mathbb{Z}}  \Phi^n_{j}(k_\Delta)\Delta t \; \left[\phi^{n+1}_{j+1/2}-\phi^{n+1}_{j-1/2} \right]\\
=& \sum_{j \in \mathbb{Z}}  \Phi^n_{j}(k_\Delta) \frac{\Delta t}{\Delta x} \; \left[\int_{x_{j}}^{x_{j+1}} \phi( t_{n+1},x) \d x - \int_{x_{j-1}}^{x_{j}} \phi(t_{n+1},x) \d x  \right] \\
=& \sum_{j \in \mathbb{Z}}  \Phi^n_{j}(k_\Delta) \frac{\Delta t}{\Delta x} \; \int_{x_{j}}^{x_{j+1}} {\left[\phi( t_{n+1},x) - \phi(t_{n+1},x- \Delta x)\right]} \d x \\
 =& \sum_{j \in \mathbb{Z}}  \Phi^n_{j}(k_\Delta) \frac{\Delta t}{\Delta x} \; \int_{x_{j}}^{x_{j+1}} \left(  \phi_x(t_{n+1},x) \Delta x + \int_{x-\Delta x}^x (x-\Delta x-y)  \phi_{xx}( t_{n+1},y) \d y \right) \d x \\
=&  \int_\mathbb{R}  \Phi_\Delta(k_\Delta)(t_n,x)  \; \Delta t \phi_x(t_{n+1},x) \d x + \sum_{j \in \mathbb{Z}}  \Phi^n_{j}(k_\Delta) \frac{\Delta t}{\Delta x} \int_{x_{j}}^{x_{j+1}} \int_{x-\Delta x}^x (x-\Delta x-y)  \phi_{xx}( t_{n+1},y) \d y \d x \\
 =&  \int_\mathbb{R}  \Phi_\Delta(k_\Delta)(t_n,x)  \;  \int_{t_n}^{t_{n+1}} \left[  \phi_x(t,x) + \int_t^{t_{n+1}} \phi_{tx} (s,x) \d s \right] \d t \d x \\
&+ \sum_{j \in \mathbb{Z}}  \Phi^n_{j}(k_\Delta) \frac{\Delta t}{\Delta x} \int_{x_{j}}^{x_{j+1}} \int_{x-\Delta x}^x (x-\Delta x-y)  \phi_{xx}(t_{n+1},y) \d y \d x \\
=&  \int_\mathbb{R}  \Phi_\Delta(k_\Delta)(t_n,x)  \;  \int_{t_n}^{t_{n+1}}   \phi_x(t,x) \d t \d x + \int_\mathbb{R}  \Phi_\Delta(k_\Delta)(t_n,x) \int_{t_n}^{t_{n+1}} \int_t^{t_{n+1}}  \phi_{tx}(s,x) \d s  \d t \d x \\
&+ \sum_{j \in \mathbb{Z}}  \Phi^n_{j}(k_\Delta) \frac{\Delta t}{\Delta x} \int_{x_{j}}^{x_{j+1}} \int_{x-\Delta x}^x (x-\Delta x-y)  \phi_{xx}(t_{n+1},y) \d y \d x
\end{align*}
Notice that, if we take $(k_L,k_R)\in Q$, then there exists a constant $C$ such that $|\Phi_\Delta| \leq C $. Consequently,
$$\mathcal I_2= \int_\mathbb{R}  \Phi_\Delta(k_\Delta)(t_n,x)  \;  \int_{t_n}^{t_{n+1}}   \phi_x(t,x) \d t \d x + \mathcal I'_2 + \mathcal I_2^{''} $$
where
$$|\mathcal I'_2| \leq C \sup_{t} || \phi_{tx}(t,\cdot)||_{L^1} (\Delta t)^2, \quad |\mathcal I_2^{''}| \leq C \sup_{t} || \phi_{xx}(t,\cdot)||_{L^1} \Delta t \Delta x.$$
We then have
\begin{equation}\label{eq:303}
\mathcal I_2= \int_\mathbb{R}  \Phi_\Delta(k_\Delta)(t_n,x)  \;  \int_{t_n}^{t_{n+1}}   \phi_x(t,x) \d t \d x +O(\Delta t^2)+O(\Delta t \Delta x).
\end{equation}

We now estimate $\mathcal I_1$. Recalling that $R_{F_0}(k_L,k_R) := |H_L(k_L) - F_0(k_L,k_R)| + |H_R(k_R) - F_0(k_L,k_R)|=R_L+R_R$, we have
\begin{align*}
\mathcal I_1 =&\Delta t  \left[R_L \phi^{n+1}_{-1/2} + R_R \phi^{n+1}_{1/2}\right] \\
=& \frac{\Delta t}{\Delta x} \left[R_L \int_{x_{-1}}^{x_0} \phi(t_{n+1}, x) \d x + R_R \int_{x_{0}}^{x_1} \phi(t_{n+1}, x) \d x \right] \\
=& \frac{\Delta t}{\Delta x} \left[(R_L  + R_R )\phi(t_{n+1}, 0)\Delta x + R_L\int_{x_{-1}}^{x_0} \int_0^x  \phi_x(t_{n+1}, y) \d y \d x + R_R \int_{x_{0}}^{x_1} \int_0^x  \phi_x(t_{n+1}, y) \d y \d x \right]\\
=& R_{F_0}(k_L,k_R) \int_{t_n}^{t_{n+1}} \phi(t, 0) \d t + R_{F_0}(k_L,k_R) \int_{t_n}^{t_{n+1}} \int_t^{t_{n+1}}  \phi_t(s,0) \d s \d t \\
&+ \frac{\Delta t}{\Delta x}R_L\int_{x_{-1}}^{x_0} \int_0^x  \phi_x(t_{n+1}, y) \d y \d x + \frac{\Delta t}{\Delta x} R_R \int_{x_{0}}^{x_1} \int_0^x  \phi_x(t_{n+1}, y) \d y \d x \\
=:& R_{F_0}(k_L,k_R) \int_{t_n}^{t_{n+1}} \phi(t, 0) \d t + \mathcal I'_1 + \mathcal I_1^{''} + \mathcal I_1^{'''}
\end{align*}
and there exist a constant $C$ such that
$$|\mathcal I'_1| \leq C || \phi_t ||_\infty (\Delta t)^2, \; \; |\mathcal I_1^{''} + \mathcal I_1^{'''}| \leq  C || \phi_x ||_\infty \Delta t \Delta x.$$
This implies that
\begin{equation}\label{eq:304}
\mathcal I_1=R_{F_0}(k_L,k_R) \int_{t_n}^{t_{n+1}} \phi(t, 0) \d t +O(\Delta t^2)+O(\Delta t \Delta x).
\end{equation}
Combining \eqref{eq:302}, \eqref{eq:303} and \eqref{eq:304}, we finally get
\begin{align*}
&\int_\mathbb{R}  \Phi_\Delta(k_\Delta)(t_n,x)  \;  \int_{t_n}^{t_{n+1}}   \phi_x(t,x) \d t \d x + R_{F_0}(k_L,k_R) \int_{t_n}^{t_{n+1}} \phi(t, 0) \d t + O(\Delta t^2) + O(\Delta x \Delta t)  \\
&\geq \sum_{j \in \mathbb{Z}} \left[ | p_{j+1/2}^{n+1} - k_\Delta| - |p_{j+1/2}^{n} - k _\Delta| \right]\Delta x \; \phi^{n+1}_{j+1/2}
\end{align*}
We sum up with respect to $0 \leq n \leq N$ and use once again Abel's transformation to get
\begin{align*}
&\int_\mathbb{R} \int_0^T   \Phi_\Delta(k_\Delta)(t,x)  \;   \phi_x(t,x) \d t \d x + \int_{0}^{T} R_{F_0}(k_L,k_R)  \phi(t, 0) \d t + O(\Delta t) + O(\Delta x) \\
&\geq \sum_{n=0}^N\sum_{j \in \mathbb{Z}} \left[ | p_{j+1/2}^{n+1} - k_\Delta| - |p_{j+1/2}^{n} - k _\Delta| \right]\Delta x \; \phi^{n+1}_{j+1/2}\\
&\geq \sum_{n=0}^N \sum_{j \in \mathbb{Z}} |p_{j+1/2}^{n} - k_\Delta| \left[ \phi^{n}_{j+1/2} -  \phi^{n+1}_{j+1/2}  \right]\Delta x \;  - \sum_{j \in \mathbb{Z}} |p^0_{j+1/2} - k_\Delta| \phi^0_{j+1/2}\Delta x+\sum_{j\in \mathbb Z} |p^{N+1}_{j+1/2} - k_\Delta| \phi^{N+1}_{j+1/2}\Delta x.
\end{align*}
Recalling that $\phi\in C^\infty_c([0,T)\times \mathbb R)$, we get that
$ \phi^{N+1}_{j+1/2}=0$ for all $j$. Hence
\begin{align*}
&\int_\mathbb{R} \int_0^T   \Phi_\Delta(k_\Delta)(t,x)  \;   \phi_x(t,x) \d t \d x + \int_{0}^{T} R_{F_0}(k_L,k_R)  \phi(t, 0) \d t + O(\Delta t) + O(\Delta x) \\&\geq \sum_{n=0}^N \sum_{j \in \mathbb{Z}} |p_{j+1/2}^{n} - k_\Delta| \int_{x_j}^{x_{j+1}}  \int_{t_n}^{t_{n+1}} - \phi_t(t,x) \d t \d x \;  - \sum_{j \in \mathbb{Z}} |p^0_{j+1/2} - k_\Delta| \int_{x_j}^{x_{j+1}}  \phi(0,x) \d x\\
&\geq -\int_\mathbb{R} \int_0^T |p_\Delta - k_\Delta|   \phi_t(t,x) \d t \d x \;   -\int_\mathbb{R} |p_\Delta(0,x) - k_\Delta| \phi(0,x) \d x
\end{align*}
and we recover the desired discrete entropy inequality.
\end{proof}

\subsection{Local compactness for a numerical scheme of a conservation law}\label{s6b}
The proof of Lemma  \ref{lem:Compactness} is a direct consequence of the following lemma, stated on one branch:
\begin{pro}[{Local} compactness on one branch]\label{pro:compactnessbranch}
Let $f\in C^2(\R)$ be Lipschitz continuous and such that
\begin{equation}\label{eq:assumptionf}
            f'' \geq \delta > 0.
\end{equation}
For $n\ge 0$, we assume that {$q^n_{j+\frac12}$ is given for $j=0$, and for $j\ge 1$ we assume that  $q^{n+1}_{j+\frac 12}$ is} solution of the following scheme
\begin{equation}\label{eq::t2}
q^{n+1}_{j+\frac 12}=q^n_{j+\frac 12}-\frac{\Delta t}{\Delta x}\left(g^f(q_{j+\frac 12}^n, q_{j+\frac 32}^n)-g^f(q^n_{j-\frac 12}, q^n_{j+\frac 12})\right)
\end{equation}
where we recall that the Godunov flux associated to $f$ is given by
 $$g^f(p,q) = \left\{ \begin{matrix} \min_{x\in [p,q]} ( f(x)) &\textrm{ if } p \leq q \\ \max_{x\in [q,p]} ( f(x)) &\textrm{ if } p \geq q. \end{matrix}\right.$$
We assume that $\left|q^n_{j+\frac 12}\right|\le M$ for some $M>0$ {and for all $j,n\ge 0$ and that $\Delta=(\Delta t,\Delta x)$} satisfies
\begin{equation}\label{eq:CFL-f}
\frac{\Delta x}{\Delta t}\ge 2 L_f\qquad{\rm and}\qquad {\gamma:=}\frac{\Delta t}{\Delta x}\frac \delta 2M\le 1
\end{equation}
where $L_f$ is the Lipschitz constant of $f$.
We set
\begin{align*}
   &q_{\Delta} := \sum_{n\in \mathbb{N}} \sum_{j\ge 1} q_{j+1/2}^n \ind_{[t_n, t_{n+1})\times [x_j, x_{j+1})} .
\end{align*}
 Then, there exists $\rho \in L^{\infty}$ and a subsequence also denoted $(q_{\Delta_k})_k$ such that
   $$q_{\Delta_k} \longrightarrow \rho \textrm{ a.e..}$$
\end{pro}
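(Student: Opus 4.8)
The plan is to obtain a.e. convergence from two ingredients: the monotonicity of the scheme \eqref{eq::t2}, and a one-sided discrete Oleinik estimate coming from the strict convexity $f''\ge\delta$. Together these yield a uniform local bound on the spatial variation of $q_\Delta$ away from $\{t=0\}$, which upgrades through the scheme to compactness in $L^1_{loc}$ and hence to a.e. convergence along a subsequence. First I would record that the condition $\Delta x/\Delta t\ge 2L_f$ in \eqref{eq:CFL-f} makes the scheme monotone: writing $q^{n+1}_{j+\frac 12}=\mathcal G(q^n_{j-\frac 12},q^n_{j+\frac 12},q^n_{j+\frac 32})$ and using $\partial_{p_1}g^f\in[0,L_f]$ and $\partial_{p_2}g^f\in[-L_f,0]$, one checks as in Lemma \ref{lem:PBorne} that $\mathcal G$ is non-decreasing in each of its three arguments. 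This propagates the assumed bound $|q^n_{j+\frac 12}|\le M$ and provides a discrete comparison principle.

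The heart of the proof is a discrete one-sided Lipschitz estimate, and this is the step I expect to be the main obstacle. Setting $d^n_j:=q^n_{j+\frac 32}-q^n_{j+\frac 12}$ and $s^n_j:=d^n_j/\Delta x$, I would establish a bound of the form $(s^n_j)^+\le C/t_n$ for $n\ge 1$, with $C$ depending only on $\delta$ and $M$. Taking the difference of \eqref{eq::t2} at consecutive cells gives $d^{n+1}_j=a^n_j\,d^n_{j-1}+b^n_j\,d^n_j+c^n_j\,d^n_{j+1}$ with non-negative coefficients $a^n_j,b^n_j,c^n_j$ by monotonicity; the strict convexity of $f$, entering through the second differences of the Godunov flux $g^f$, then produces a negative quadratic correction of size $\delta\,\Delta t\,((s^n_j)^+)^2$. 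This leads to a discrete Riccati-type inequality whose solutions decay like $1/t_n$. The second condition $\frac{\Delta t}{\Delta x}\frac{\delta}{2}M\le 1$ of \eqref{eq:CFL-f} is precisely what keeps the relevant map order-preserving on one-signed differences and allows the computation to close. The careful bookkeeping of the Godunov flux increments and the correct sign of the quadratic term require genuine work; since we only claim interior compactness, the prescribed boundary value $q^n_{\frac 12}$ at $j=0$ plays no role in this estimate.

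Finally I would convert this estimate into compactness. Fix $t_0>0$ and $R>0$. The bound $(s^n_j)^+\le C/t_0$ together with $|q^n_{j+\frac 12}|\le M$ controls the spatial total variation of $q_\Delta(t,\cdot)$ uniformly on $[t_0,T]\times[0,R]$: the positive parts of the increments are bounded by the Oleinik estimate, and the sum of the negative parts is then controlled by telescoping the $d^n_j$ against the $L^\infty$ bound, giving $\sum_j|d^n_j|\le 2CR/t_0+2M$ uniformly in $\Delta$. Since $q_\Delta$ solves a discrete conservation law with Lipschitz flux and bounded values, its $L^1$-in-space modulus of continuity in time is in turn controlled by this spatial variation. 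The Kolmogorov--Riesz--Fr\'echet compactness criterion on $[t_0,T]\times[0,R]$, followed by a diagonal extraction as $t_0\downarrow 0$ and $R\uparrow\infty$, then produces a subsequence converging in $L^1_{loc}$, hence a.e., to some $\rho\in L^\infty$. Translated back to the junction through the two half-line problems, this is exactly the content of Lemma \ref{lem:Compactness}.
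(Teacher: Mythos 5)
Your overall strategy --- monotonicity from the first condition in \eqref{eq:CFL-f}, a discrete Oleinik estimate obtained from a Riccati-type inequality driven by $f''\ge\delta$, then spatial TV bounds, a time-modulus from the scheme, and compactness plus diagonal extraction --- is exactly the route the paper takes (Lemmas \ref{lem:BoundDiscreteGradient}, \ref{lem:discreteOleinik} and \ref{lem::38}). However, there is a genuine gap in the form in which you state the key estimate: the bound $(s^n_j)^+\le C/t_n$ \emph{cannot} hold uniformly in $j\ge 1$, and your parenthetical claim that ``the prescribed boundary value $q^n_{\frac12}$ at $j=0$ plays no role in this estimate'' is precisely where the argument breaks. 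The boundary datum is an arbitrary bounded sequence and it enters the evolution of the first interior cell through the flux $g^f(q^n_{\frac12},q^n_{\frac32})$, so it can inject arbitrarily large \emph{positive} gradients at arbitrarily late times. Concretely, take $f(q)=\frac\delta2 q^2$ on the relevant range, initial data and boundary datum identically equal to $M$ up to a large time $T_1=N_1\Delta t$, and boundary datum $-M$ afterwards. The solution is frozen at the constant $M$ until $T_1$, and one step later $q^{N_1+1}_{\frac32}=M-\frac{\Delta t}{\Delta x}\bigl(g^f(M,M)-g^f(-M,M)\bigr)=M(1-\gamma)$ while $q^{N_1+1}_{\frac52}=M$, so that $s^{N_1+1}_1=\gamma M/\Delta x$. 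Along the natural regime where $\Delta x/\Delta t$ is held at a fixed ratio compatible with \eqref{eq:CFL-f}, this blows up as $\Delta x\to0$ at the fixed time $T_1+\Delta t$, contradicting any bound of the form $C/t_n$. This is exactly why the paper's Oleinik estimate (Lemma \ref{lem:discreteOleinik}) is \emph{localized} on spatial index windows $[J_1+n,J_2-n]$ that shrink by one cell on each side per time step: the estimate is only propagated on a discrete domain of dependence that never sees the boundary column.

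The same defect contaminates your compactness step: you claim a uniform TV bound on rectangles $[t_0,T]\times[0,R]$, which touch $x=0$. Since the Oleinik control of the positive parts fails near the boundary, the telescoping argument bounding $\sum_j|d^n_j|$ fails there as well, and indeed no uniform-in-$\Delta$ TV bound can hold on a region touching $\{x=0\}$: the boundary may inject fresh oscillations of amplitude comparable to $M$ at every time step, and the positive gradients these create decay (at best) like $1/x$ in the distance $x$ to the boundary, so the variation accumulated near $x=0$ is not uniformly bounded as $\Delta\to0$. The repair is the one implemented in the paper: combine the localized Oleinik bound with the $L^\infty$/telescoping estimate of Lemma \ref{lem::38} to get TV bounds on triangles $\Omega_{\theta,R_1,R_2}$ compactly contained in the open quadrant, i.e.\ away from both $\{t=0\}$ \emph{and} $\{x=0\}$, then cover $(0,+\infty)\times(0,+\infty)$ by countably many such triangles and extract diagonally; a.e.\ convergence on the quadrant follows since its boundary is a null set. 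Apart from this missing localization, the rest of your outline --- the order-preservation of the Riccati map $w\mapsto w-c\,\Delta t\,w^2$ on the admissible range, guaranteed by the second condition in \eqref{eq:CFL-f}, and the quadratic gain extracted from second differences of the Godunov flux --- matches the paper's Lemmas \ref{lem:BoundDiscreteGradient} and \ref{lem:discreteOleinik}, though you correctly flag that making the sign of that quadratic term rigorous (the paper does it via the Hessian structure of $g^f$ in Lemma \ref{lem:diffGodunov}) is the technical heart of the proof.
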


\begin{proof}[Proof of Lemma \ref{lem:Compactness}]
   The proof is a direct consequence of the previous {proposition} applied on $(0,+\infty)$ to $q^{n,+}_{j+\frac 12}=p^n_{j+\frac 12}$ and on $(-\infty, 0)$ to   $q^{n,-}_{j+\frac 12}=p^n_{-j-\frac 12}$ for $j\ge 1$.
\end{proof}

The rest of this section is devoted to the proof of Proposition \ref{pro:compactnessbranch}.
The idea consists to use a localized discrete Oleinik estimate, see Lemma \ref{lem:discreteOleinik}. To prove this estimate, we first need to prove the following discrete ODE on the discrete gradient.

\begin{lemma}[A discrete ODE on the discrete Gradient]\label{lem:BoundDiscreteGradient}
   For $j\ge 1$, let
$$w_j^n := \frac{q^n_{j+1/2} - q^n_{j-1/2}}{\Delta x}$$
and for $j\ge 2$
$$\hat w_j^n := \max \{ 0, w_{j-1}^n, w_{j}^n, w_{j+1}^n \}.$$
Then, for all $j\ge 2$ and for all $n\ge 0$
\begin{equation}\label{eq:boundDGradient}
   \frac{\max(0,w^{n+1}_j) - \hat w_j^n}{\Delta t}  \leq -\frac \delta 8|\hat w^n_j|^2.
\end{equation}
\end{lemma}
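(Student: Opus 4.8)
The plan is to turn the scheme \eqref{eq::t2} into a closed evolution inequality for the discrete gradient $w^n_j$ that mimics the continuous Oleinik bound $\dot w\le -f''(\rho)w^2\le -\delta w^2$; this estimate then feeds the localized discrete Oleinik estimate of Lemma~\ref{lem:discreteOleinik}. First I would subtract two consecutive instances of \eqref{eq::t2} and divide by $\Delta x$ to get the exact identity
$$w^{n+1}_j=w^n_j-\frac{\Delta t}{(\Delta x)^2}\Big(G^n_{j+1}-2G^n_j+G^n_{j-1}\Big),\qquad G^n_j:=g^f\!\left(q^n_{j-\frac12},q^n_{j+\frac12}\right),$$
which is legitimate for $j\ge 2$ since only the single Godunov flux $g^f$ intervenes there. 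Using that $g^f$ is nondecreasing in its first and nonincreasing in its second argument, a mean-value (segment-integration) argument rewrites the right-hand side as a combination $w^{n+1}_j=c_-\,w^n_{j-1}+c_0\,w^n_j+c_+\,w^n_{j+1}$ with $c_\pm\ge 0$ and, thanks to the first CFL condition $\Delta x/\Delta t\ge 2L_f$ in \eqref{eq:CFL-f}, $c_0\ge 1-2\frac{\Delta t}{\Delta x}L_f\ge 0$. This is the discrete maximum-principle structure; note that all three neighbours $w^n_{j-1},w^n_j,w^n_{j+1}$ genuinely enter, which is precisely why $\hat w^n_j$ is defined as the maximum over them and over $0$.

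Second, I would quantify the defect $1-(c_-+c_0+c_+)$ of this combination, which is where strict convexity must appear. Writing each $G$ through the explicit Godunov form $g^f=\max(f^+,f^-)$ (as in Lemma~\ref{lem:scheme}, where $f^\pm$ are the monotone envelopes of $f$) and expanding $f$ to second order, the defect is bounded below by $\delta\,\Delta t\,\max(0,w^n_{k^\star})$, where $k^\star\in\{j-1,j,j+1\}$ is the interface carrying the flux in the relevant Godunov regime. Concretely, in the left-upwind regime $G^n_\ell=f(q^n_{\ell-1/2})$ and $f''\ge\delta$ gives $f(q_{j+1/2})-f(q_{j-1/2})\ge f'(q_{j-1/2})\,\Delta x\,w^n_j+\tfrac\delta2(\Delta x\,w^n_j)^2$, producing a dissipative term $-\tfrac\delta2\Delta t\,(w^n_j)^2$; the right-upwind, transonic and shock regimes are symmetric and use $w^n_{j+1}$ in place of $w^n_j$. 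I expect the genuine work — and the main obstacle — to be the book-keeping of these Godunov regimes and of the mixed configurations in which $G^n_{j-1},G^n_j,G^n_{j+1}$ lie in different regimes, checking that in every case one recovers a dissipation proportional to $\delta\Delta t$ times the square of the gradient that realises $\hat w^n_j$.

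Third, I would close the estimate by an elementary scalar inequality. Setting $h:=\hat w^n_j\ge 0$ and letting $x$ denote the neighbouring gradient carrying the dissipation (so $0\le x\le h$ in the relevant case), the second step yields, after using $c_0\ge\tfrac12$,
$$h-w^{n+1}_j\ \ge\ \tfrac12(h-x)+\delta\,\Delta t\,x\,h.$$
Since $x\mapsto \tfrac12(h-x)+\delta\Delta t\,x\,h$ is affine, it suffices to test the endpoints: at $x=h$ it equals $\delta\Delta t\,h^2\ge\frac\delta8\Delta t\,h^2$, and at $x=0$ it equals $\tfrac12 h\ge\frac\delta8\Delta t\,h^2$, precisely because the second CFL condition $\gamma=\frac{\Delta t}{\Delta x}\frac\delta2 M\le 1$ together with the a priori bound $h\le 2M/\Delta x$ gives $\delta\Delta t\,h\le 4\gamma\le 4$. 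This produces $w^{n+1}_j\le h-\frac\delta8\Delta t\,h^2$, i.e. \eqref{eq:boundDGradient}, and explains the constant $\tfrac18=\tfrac12\cdot\tfrac14$ as the product of the CFL slack in $c_0$ and the convexity factor. Finally I would dispatch the degenerate situations directly: if $\hat w^n_j=0$ then $w^{n+1}_j$ is a nonnegative combination of nonpositive terms, hence $\le 0$; and whenever $w^{n+1}_j\le 0$ the claim reduces to $\frac\delta8\Delta t\,\hat w^n_j\le 1$, which again follows from $\gamma\le 1$.
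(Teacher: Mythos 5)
Your Step 1 is correct and coincides with the paper's opening move (the second-difference identity, monotonicity of the resulting map in each gradient, and nonnegativity of the coefficients under the CFL condition \eqref{eq:CFL-f}), and your treatment of the degenerate cases ($\hat w^n_j=0$, or $w^{n+1}_j\le 0$) is also fine. The gap is in Steps 2--3, i.e.\ exactly in the part you set aside as ``book-keeping''. First, the dissipation that a second-order expansion actually produces inside a Godunov regime is \emph{quadratic} in the active gradient --- your own ``concretely'' computation gives $\tfrac{\delta}{2}\Delta t\,(w^n_j)^2$ --- whereas Step 3 uses the strictly stronger linear-in-$x$ form ``defect $\ge \delta\Delta t\,x$'', i.e.\ a dissipation $\delta\Delta t\,x\,h$; since $0\le x\le h$ this is an unjustified upgrade. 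It is not cosmetic: your endpoint test is only legitimate because $x\mapsto \tfrac{1}{2}(h-x)+\delta\Delta t\,x\,h$ is affine, while with the correct quadratic dissipation the function $x\mapsto c(h-x)+\tfrac{\delta}{2}\Delta t\,x^2$ is convex, its minimum can be interior, and (one can check) with $c=\tfrac{1}{2}$ the required inequality actually fails when $\delta\Delta t\,h$ is near its admissible maximum $4\gamma$. Second, the constant $c_0\ge\tfrac{1}{2}$ does not follow from \eqref{eq:CFL-f}: your own Step 1 correctly derives only $c_0\ge 1-2\tfrac{\Delta t}{\Delta x}L_f\ge 0$, and when the dissipation is carried by a lateral gradient the corresponding coefficient $c_\pm$ can vanish altogether (e.g.\ $c_+=0$ in a purely left-upwind configuration), so no uniform lower bound on $c_{k^\star}$ is available.

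The deeper missing idea is structural. The statement requires dissipation proportional to $|\hat w^n_j|^2$, where $\hat w^n_j$ is the maximum over \emph{all three} neighbouring gradients and $0$, including gradients that never enter the active flux (again, $w^n_{j+1}$ in a purely left-upwind regime); any expansion of the actual stencil, however carefully the regimes are tracked, only yields dissipation in the gradients that the Godunov flux actually sees. Reconciling the two, uniformly over all mixed regime configurations and sign patterns, is the heart of the proof --- not a verification one can defer. The paper avoids this entirely by reversing your order of operations: monotonicity is used \emph{first} to replace all three gradients by $\hat w^n_j$, which reduces the problem to a second difference of the single convex function $g^f$ along the diagonal segment $Q_j\pm\Delta x\,(\hat w^n_j,\hat w^n_j)$; then Lemma \ref{lem:diffGodunov} (convexity of $g^f$, the explicit Hessian, the subgradient inequality \eqref{eq:est-gf}, and the fact that the segment crosses the singular set $\Gamma$ at most once) is combined with the key observation \eqref{eq:202} that for every $t\in(\tfrac{1}{2},1]$ at least one of the two Hessian indicators equals $1$. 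This is also where the constant comes from: $\tfrac{1}{8}=\int_{1/2}^1(1-t)\,\mathrm dt$, not the product of CFL slack and a convexity factor that you propose. If you want to complete your argument, adopt this ``substitute $\hat w^n_j$ first, expand second'' structure; in your current ordering the regime analysis you postponed is both unavoidable and, as sketched, not closable with the stated constants.
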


\begin{proof}
First, fix $n \in \mathbb{N}$ and $j\ge 2$. we have
\begin{align*}
w_j^{n+1} &=  w_j^n - \frac{\Delta t}{(\Delta x)^2}
   \bigg[ g^{f}(q_{j+1/2}^n ,q_{j+3/2}^n)  - g^{f}(q_{j-1/2}^n,q_{j+1/2}^n) - g^{f}(q_{j-1/2}^n ,q_{j+1/2}^n) + g^{f}(q_{j-3/2}^n , q_{j-1/2}^n )  \bigg]\\
   &=  w_j^n - \frac{\Delta t}{(\Delta x)^2}
   \bigg[ g^{f}(q_{j-1/2}^n + w_j^n \Delta x,q_{j+1/2}^n + w_{j+1}^n \Delta x ) - {2g^{f}(q_{j-1/2}^n,q_{j+1/2}^n)} \\
   &+ g^{f}(q_{j-1/2}^n - w_{j-1}^n\Delta x, q_{j+1/2}^n - w_{j}^n \Delta x)  \bigg]\\
&=:G(w_{j-1}^n, w_j^n, w_{j+1}^n, q_{j-1/2}^n, q_{j+1/2}^n).
\end{align*}
Due to the monotonicity of $g^{f}$, we know that $G$ is non-decreasing with respect to its first and third variables. We now prove that $G$ is also non-decreasing with respect to its second variable. Indeed, we have
\begin{align*}
    \partial_w G(a,w,b,q_{-1},q_{1})
   &= 1 - {\frac{\Delta t}{\Delta x}\left[ \partial_1 g^{f}(q_{-1}+w\Delta x, q_1 + b\Delta x)  - \partial_2 g^{f}(q_{-1} -a \Delta x, q_1-w\Delta x) \right]} \\
   &\geq  1 - 2 \frac{\Delta t}{\Delta x} L_f \geq 0,
\end{align*}
by \eqref{eq:CFL-f}.
This implies that
$$
w_j^{n+1} = G(w_{j-1}^n, w_j^n, w_{j+1}^n, q_{j+1/2}^n, q_{j-1/2}^n) \leq G(\hat w_{j}^n, \hat w_j^n, \hat w_{j}^n, q_{j+1/2}^n, q_{j-1/2}^n).
$$
Moreover,
$$0 = G(0, 0, 0, q_{j+1/2}^n, q_{j-1/2}^n) \leq G(\hat w_{j}^n, \hat w_j^n, \hat w_{j}^n, q_{j+1/2}^n, q_{j-1/2}^n).
$$
This implies that
\begin{equation}\label{eq:200}
   \max(0,w^{n+1}_j) \leq G(\hat w_{j}^n, \hat w_j^n, \hat w_{j}^n, q_{j+1/2}^n, q_{j-1/2}^n).
\end{equation}
For clarity's sake, we omit the $n$ dependency when not necessary.
Set
$$Q_j := \left( \begin{matrix} q_{j-1/2}^n \\ q_{j+1/2}^n \end{matrix} \right), \; \; W_j := \left( \begin{matrix} \hat w_j^n \\ \hat w_j^n \end{matrix} \right).$$
We then get
\begin{equation}\label{eq:204}
\frac{\max(0,w^{n+1}_j) - \hat w_j^n}{\Delta t} \leq - \frac{1}{(\Delta x)^2}
\left[ g^{f}(Q_j+W_j\Delta x) - 2 g^{f}(Q_j) + g^{f}(Q_j - W_j \Delta x) \right].
\end{equation}
We now want to estimate the right {hand} term. Using \eqref{eq:est-gf} in Lemma \ref{lem:diffGodunov} below (with $P=Q_j$, $W=W_j$ and $\alpha=\pm \Delta x$), we have
\begin{equation}\label{eq:201}
\frac{\max(0,w^{n+1}_j) - \hat w_j^n}{\Delta t} \leq -I_j=-(I_j^++I_j^-)
\end{equation}
where for {$\beta=\pm$,
$$I_j^\beta=\int _0^1(1-t)\Hess(g^f)(Q_j+t\beta \Delta x W_j)W_j\cdot W_j dt.$$}
To estimate $I_j^\pm$, we use the explicit form of $\Hess(g^f)(Q_j+t\alpha \Delta x W_j)$ given in Lemma \ref{lem:diffGodunov} below. We assume for the moment that $\hat w^n_j>0$. We then have
$$I_j^+\ge \delta |\hat w^n_j|^2\int _0^1 (1-t)\ind_{\{f^-(q) < f(p), f'(p)> 0 \}}dt$$
where $p=p(t)=q_{j-\frac 12}^n+t\Delta x \hat w^n_j$ and $q=q(t)=q_{j+\frac 12}^n+t\Delta x \hat w^n_j$, and
$$I_j^-\ge \delta |\hat w^n_j|^2\int _0^1 (1-t)\ind_{\{f(q') > f^+(p'), f'(q')< 0  \}}dt$$
where $p'=p'(t)=q_{j-\frac 12}^n-t\Delta x \hat w^n_j$ and $q'=q'(t)=q_{j+\frac 12}^n-t\Delta x \hat w^n_j$. We now want to prove that
\begin{equation}\label{eq:202}
\ind_{\{f^-(q) < f(p), f'(p)> 0 \}}+\ind_{\{f(q') > f^+(p'), f'(q')< 0  \}}\ge 1 \quad \forall t\in ]\frac 12, 1].
\end{equation}
Since $\hat w^n_j>0$, we have $q'-p'= w^n_j \Delta x-2 t \Delta x \; \hat w^n_j\le (1-2t)\Delta x\;  \hat w^n_j<0$ if $t>\frac 12$. Moreover, by definition of $p,q,p',q'$, we have $p'<p$ and $q'<q$.

By {contradiction assume that \eqref{eq:202} is not satisfied, i.e.
$$\left\{\begin{array}{lll}
&f'(p)\le 0 &\quad \mbox{or}\quad f^-(q)\ge f(p)\\
\mbox{and}&&\\
&f(q')\le f^+(p') &\quad \mbox{or}\quad f'(q')\ge 0.\\
\end{array}\right.$$}
 On the one hand, if $f'(p)\le 0$, since $q'<p'{<p}$, we deduce that ${f'(q')<0}$. Hence $f(q')\le f^+(p')$. Since $p'\le p$, we also have $f^+(p')=\inf f$ and so $f(q')=\inf f$ which contradicts the fact that $f'(q')<0$.
On the other hand, if $f'(p)> 0$ and $f^-(q)\ge f(p)$, then $f'(q)<0$.
Since $q'<q$ and $p'<p$, we then get
$$f(q')=f^-(q')>f^-(q)\ge f(p)=f^+(p)\ge f^+(p')$$
which is a contradiction. We then deduce that \eqref{eq:202} holds true. This implies that
$$I_j\ge \delta |\hat w^n_j|^2\int_{1/2}^1(1-t) dt=\frac 18 \delta |\hat w^n_j|^2.$$
Notice that this inequality is also true if $\hat w^n_j=0$.
Injecting this in \eqref{eq:201}, we get the result.
\end{proof}

It remains to show the following lemma concerning some properties of the Godunov flux.

\begin{lemma}[Regularity of the Godunov flux]\label{lem:diffGodunov}
Define
$$\Gamma := \{ (p,q) \textrm{ s.t. } f^+(p) =f^-(q) >\inf_\R f \}.$$
   Then $g^f$ is ${C}^1(\mathbb{R}^2 \backslash \Gamma)$ and
   \begin{align*}
       \nabla g^f (p,q) = \left( \begin{matrix} f'(p) \ind_{\{f^-(q) < f(p), f'(p)> 0 \}} \\ f'(q) \ind_{\{f(q) > f^+(p), f'(q)< 0 \}}. \end{matrix} \right)
   \end{align*}
   Moreover $g^f$ is $W^{2,\infty}(\mathbb{R}^2 \backslash\Gamma)$ and for all $(p,q)\not \in \Gamma$
   $$
      \Hess (g^f ) (p,q) = \left( \begin{matrix} \ind_{\{f^-(q) < f(p), f'(p)> 0 \}} f''(p) &0 \\ 0 &\ind_{\{f(q) > f^+(p), f'(q)< 0 \}} f''(q) \end{matrix} \right).
   $$
      Finally, if $P=(p,q)$ and $W=(w,w)$, then for all $\alpha \in \R$ and for any subgradient $\nabla g^f(P)\in \partial g^f(P)$ (which is a true gradient if $P\not \in \Gamma$)
      \begin{equation}\label{eq:est-gf}
      g^f(P+\alpha W)-g^f(P)\ge \alpha W\cdot \nabla g^f(P)+\alpha^2\int _0^1(1-t)\Hess(g^f)(P+t\alpha W)W\cdot W dt
      \end{equation}
      \end{lemma}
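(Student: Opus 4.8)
The plan is to reduce everything to the identity $g^f(p,q)=\max(f^+(p),f^-(q))$ furnished by Lemma \ref{lem:scheme}, and then to treat $g^f$ as the maximum of two convex functions, each depending on a single variable. First I would record the regularity of the two envelopes. Writing $p_0$ for the minimiser of $f$, one has $(f^+)'(p)=f'(p)\ind_{\{f'(p)>0\}}$ and $(f^-)'(q)=f'(q)\ind_{\{f'(q)<0\}}$; since $f'(p_0)=0$ these one-sided derivatives match, so $f^\pm\in C^1(\R)$, whereas $(f^+)''=f''\ind_{\{f'>0\}}$ and $(f^-)''=f''\ind_{\{f'<0\}}$ only lie in $L^\infty$ (a jump of $f''$ occurs at $p_0$). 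Hence $f^\pm\in C^1\cap W^{2,\infty}$, and both are convex (a $C^1$ function with nondecreasing derivative).

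Next I would differentiate the maximum. Away from the set where the two branches coincide, $g^f$ agrees locally with either $(p,q)\mapsto f^+(p)$ or $(p,q)\mapsto f^-(q)$, so it inherits the $C^1\cap W^{2,\infty}$ regularity of a single envelope; since each branch depends on one variable only, the mixed partials vanish and $\nabla g^f$, $\Hess(g^f)$ take the diagonal forms announced. The only locus of non-differentiability is where $f^+(p)=f^-(q)$ with both envelope derivatives nonzero, that is where $f^+(p)=f^-(q)>\inf f$, which is exactly $\Gamma$. A short check that $\{f^-(q)<f(p),\ f'(p)>0\}$ and $\{f(q)>f^+(p),\ f'(q)<0\}$ describe the regions where the respective branch strictly dominates, together with the remark that the switching of these indicators happens precisely on $\Gamma$ (elsewhere the prefactor $f'$ vanishes continuously), yields $g^f\in C^1(\R^2\setminus\Gamma)\cap W^{2,\infty}(\R^2\setminus\Gamma)$ with the stated derivatives.

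For the second-order inequality \eqref{eq:est-gf}, the crucial structural fact is that $g^f$, being the maximum of the two convex functions above, is convex on $\R^2$. Fix $P=(p,q)$, $W=(w,w)$ and $\alpha\in\R$, and set $\phi(t):=g^f(P+t\alpha W)$ for $t\in[0,1]$. Then $\phi$ is convex and, by the previous step, lies in $W^{2,\infty}$ away from the finitely many $t$ at which the segment meets $\Gamma$. Its right derivative $\phi'_+$ is nondecreasing, and the nonnegative measure $\mu:=d\phi'_+$ splits as $\mu=\psi\,dt+\mu_s$, with absolutely continuous density $\psi(t)=\alpha^2\,\Hess(g^f)(P+t\alpha W)W\cdot W$ and singular part $\mu_s\ge 0$ (the a.c.\ and singular parts of a nonnegative measure are themselves nonnegative). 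Writing $\phi(1)-\phi(0)=\int_0^1\phi'_+(t)\,dt$ and using $\phi'_+(t)=\phi'_+(0)+\mu((0,t])$, Fubini gives
\[
\phi(1)-\phi(0)=\phi'_+(0)+\int_{(0,1]}(1-s)\,d\mu(s)\ge \phi'_+(0)+\int_0^1(1-s)\,\psi(s)\,ds,
\]
since $(1-s)\ge 0$ on $[0,1]$ and $\mu_s\ge 0$. Finally, convexity yields $\phi'_+(0)\ge \alpha W\cdot\nabla g^f(P)$ for every subgradient $\nabla g^f(P)\in\partial g^f(P)$, because the directional derivative dominates each subgradient; this is precisely the linear term of \eqref{eq:est-gf}.

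The main obstacle, and the reason the statement is phrased with an inequality and with subgradients, is the crossing of $\Gamma$: along the segment $g^f$ is only $W^{2,\infty}$ off $\Gamma$, so a pointwise second-order Taylor expansion is not available at a crossing. Convexity circumvents this entirely, since the crossings contribute only through the singular part $\mu_s$, whose sign is all that is needed to turn the Taylor identity into the desired inequality, while the subgradient inequality absorbs the case $P\in\Gamma$ where $\nabla g^f(P)$ is not a genuine gradient.
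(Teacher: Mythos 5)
Your strategy---rewrite $g^f(p,q)=\max(f^+(p),f^-(q))$ via Lemma \ref{lem:scheme}, read off the diagonal first and second derivatives off $\Gamma$, then prove \eqref{eq:est-gf} from convexity of $g^f$ by restricting to the segment $t\mapsto P+t\alpha W$ and splitting the second-derivative measure into an absolutely continuous part plus a nonnegative singular part---is exactly the structure of the paper's proof (the paper phrases the measure inequality as $D^2g^f\ge \{D^2g^f\}_{|U}\cdot\ind_U$ with $U$ the complement of $\Gamma$, and your one-dimensional Fubini computation is a more detailed version of the same step). However, there is one genuine gap, and it sits precisely where the paper does all of its work: you assert, with no argument, that the segment meets $\Gamma$ at ``finitely many $t$''. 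Nothing in your previous step supplies this; the regularity of $g^f$ off $\Gamma$ says nothing about how a given line intersects $\Gamma$. Moreover the fact is not generic: it depends crucially on the diagonal direction $W=(w,w)$. Indeed, take $f(x)=x^2$, so that $f^+(p)=\max(p,0)^2$, $f^-(q)=\min(q,0)^2$ and $\Gamma=\{(p,-p):\ p>0\}$ is a half-line of direction $(1,-1)$: a segment with direction $(1,-1)$ can lie \emph{entirely} inside $\Gamma$, and the crossing set is then an interval of positive measure. In such a situation your identification of the a.c.\ density $\psi(t)$ with $\alpha^2\,\Hess(g^f)(P+t\alpha W)W\cdot W$ collapses---the integrand in \eqref{eq:est-gf} is not even defined on a set of positive measure. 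So the null character of the crossing set is the crux of the lemma, not a routine remark, and it must be proved using the specific form $W=(w,w)$.

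The repair is the paper's key claim: for $W=(w,w)$ with $w\ne 0$, one has $\Gamma\cap(\Gamma+tW)=\emptyset$ for every $t\ne 0$, hence the segment meets $\Gamma$ at most once. The proof is short: if $(q_1,q_2)\in\Gamma$ and $(q_1+tw,q_2+tw)\in\Gamma$ with, say, $tw>0$, then membership in $\Gamma$ places these points in the region where $f^+,f^->\inf_\R f$, where $f^+$ is strictly increasing and $f^-$ strictly decreasing (here $f''\ge\delta$ is used); consequently
$$f^-(q_2+tw)=f^+(q_1+tw)>f^+(q_1)=f^-(q_2)>f^-(q_2+tw),$$
a contradiction. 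Once this is inserted, the crossing set is at most a single point, your decomposition $\mu=\psi\,dt+\mu_s$ with $\psi$ given by the classical Hessian a.e.\ is legitimate, and the rest of your argument (Fubini, $\mu_s\ge 0$, and $\phi'_+(0)\ge\alpha W\cdot\nabla g^f(P)$ for any subgradient) goes through and yields \eqref{eq:est-gf}.
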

  \begin{proof}
We just prove \eqref{eq:est-gf}, the proof of the other properties being direct consequences of the reformulation of the Godunov flux, in the convex case, $g^f(p,q)=\max(f^+(p), f^-(q))$, given in Lemma \ref{lem:scheme}.

If $w=0$, the result is obvious. Assume that $w\ne 0$. We set $U=[-M,M]^2\backslash \Gamma$. Since $f$ is convex, {$g^f$ is also convex and} we have $D^2g^f\ge \{D^2g^f\}_{|U}\cdot \ind_{U}$, where $ \{D^2g^f\}_{|U}$ is the classical  derivative {part of $D^2g^f$} given by $\Hess(g^f)$. So to prove \eqref{eq:est-gf}, it's sufficient to show
that $\ind_{U}(Q+\alpha t W)=1$ for a.e. $t$. To show this, we claim that for all $t$
$$\Gamma\cap(\Gamma+t W)=\emptyset.$$
Indeed, if there exists $Q=(q_1,q_2)\in \Gamma\cap(\Gamma+t W)$ for some $t\ne 0$ (assume that  $w>0$ and $t>0$ to fix the idea, the other cases being similar), then
$$f^-(q_2+tw)=f^+(q_1+tw)>f^+(q_1)=f^-(q_2)>f^-(q_2+tw)$$
which is a contradiction.
This implies that the curve $t\mapsto Q+\alpha t W$ can cross $\Gamma$ at most one time and so $\ind_{U}(Q+\alpha t W)=1$ for a.e. $t$.
\end{proof}

\begin{lemma}[Discrete Oleinik estimate]\label{lem:discreteOleinik}
Under the same assumptions as Proposition \ref{pro:compactnessbranch}, let $R_2>R_1 > 0$ and {$J_2>J_1 \ge 2$} be such that $(J_1 \Delta x,J_2\Delta x)\subset  (R_1,R_2)$. Then {for $w^n_j$ defined in Lemma \ref{lem:BoundDiscreteGradient} and for $0\le n \leq \frac 12 (J_2-J_1)$, we have}
\begin{equation}\label{eq:OleinikEstimate}
   \frac \delta 8 \sup_{J_1+n \leq j \leq J_2 - n} w_j^n \leq \frac{1}{(n+1) \Delta t}.
\end{equation}
\end{lemma}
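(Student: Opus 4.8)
The plan is to read Lemma~\ref{lem:BoundDiscreteGradient} as a discrete version of the Riccati differential inequality $y'\le -\tfrac{\delta}{8}y^2$, whose solutions obey $y(t)\le 8/(\delta t)$ regardless of how large the initial datum is, and to reproduce this decay by induction on $n$. Since the right-hand side of \eqref{eq:OleinikEstimate} is positive and $w_j^n\le \max(0,w_j^n)$, it suffices to control the positive part, so I introduce
$$ b_n := \frac{\delta}{8}\sup_{J_1+n\le j\le J_2-n}\max(0,w_j^n), $$
which is well defined exactly for $0\le n\le \tfrac12(J_2-J_1)$, and I aim to prove $b_n\le \tfrac{1}{(n+1)\Delta t}$. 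For the base case $n=0$, the bound $|q^0_{j\pm 1/2}|\le M$ gives $|w_j^0|\le 2M/\Delta x$, and the second CFL condition in \eqref{eq:CFL-f}, namely $\tfrac{\delta}{2}M\tfrac{\Delta t}{\Delta x}\le 1$, yields the sharp estimate $b_0\le \tfrac{1}{2\Delta t}$. This is stronger than the claimed $\tfrac{1}{\Delta t}$, and the sharper constant is precisely what the induction will consume.

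For the inductive step, I fix $j$ with $J_1+(n+1)\le j\le J_2-(n+1)$, so that the three neighbours $j-1,j,j+1$ all lie in the window $[J_1+n,J_2-n]$; by finite speed of propagation this is exactly what the one-step stencil needs, and it forces $\beta_j:=\tfrac{\delta}{8}\hat w_j^n\le b_n$. Multiplying \eqref{eq:boundDGradient} by $\tfrac{\delta}{8}$ gives $\tfrac{\delta}{8}\max(0,w_j^{n+1})\le \psi(\beta_j)$, where $\psi(x):=x-\Delta t\,x^2$. Since $\psi$ is nondecreasing on $[0,\tfrac{1}{2\Delta t}]$ and $\beta_j\le b_n\le \tfrac{1}{2\Delta t}$ — the latter holding by the induction hypothesis when $n\ge 1$ and by the base case when $n=0$ — I take the supremum over $j$ to obtain the scalar recursion $b_{n+1}\le \psi(b_n)$.

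It then remains to iterate. For $n\ge 1$, monotonicity of $\psi$ on $[0,\tfrac{1}{2\Delta t}]$ together with $b_n\le \tfrac{1}{(n+1)\Delta t}$ gives $b_{n+1}\le \psi\bigl(\tfrac{1}{(n+1)\Delta t}\bigr)=\tfrac{n}{(n+1)^2\Delta t}\le \tfrac{1}{(n+2)\Delta t}$, the last step being the elementary inequality $n(n+2)\le (n+1)^2$. For $n=0$ the same computation run from $b_0\le \tfrac{1}{2\Delta t}$ gives $b_1\le \psi\bigl(\tfrac{1}{2\Delta t}\bigr)=\tfrac{1}{4\Delta t}\le \tfrac{1}{2\Delta t}$, closing the induction. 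The one genuine obstacle is that $\psi$ fails to be monotone beyond $x=\tfrac{1}{2\Delta t}$, so none of these replacements of $\beta_j$ or $b_n$ by their upper bounds is valid unless the arguments are kept below $\tfrac{1}{2\Delta t}$; this is exactly why the sharp CFL-based base estimate $b_0\le \tfrac{1}{2\Delta t}$ (rather than the crude $\tfrac{1}{\Delta t}$) is indispensable, after which the shrinking window $[J_1+n,J_2-n]$ lets the bound propagate without loss.
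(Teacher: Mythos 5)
Your proposal is correct and follows essentially the same route as the paper's proof: the same induction on $n$ over the shrinking window $[J_1+n,J_2-n]$, driven by Lemma \ref{lem:BoundDiscreteGradient}, the monotonicity of the one-step map $w\mapsto w-\Delta t\,\tfrac{\delta}{8}w^2$ below the threshold $\left(\Delta t\,\tfrac{\delta}{4}\right)^{-1}$ (your $\psi$ is the paper's $\Phi$ in rescaled variables), the discrete Riccati supersolution inequality $\tfrac{1}{m}-\tfrac{1}{m^2}\le\tfrac{1}{m+1}$, and the CFL condition \eqref{eq:CFL-f} for the base case. The only (cosmetic) difference is that you carry the sharper base bound $b_0\le \tfrac{1}{2\Delta t}$ so that monotonicity applies uniformly at every step, whereas the paper keeps the weaker invariant $h^n$ and instead treats the step $n=0\to 1$ separately, using the same CFL-derived bound $\hat w^0_j\le \tfrac{1}{2}h^0$.
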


\begin{remark}\label{rem::41}
We provide here a proof of the localized estimate (\ref{eq:OleinikEstimate}).
A similar estimate (with possible different constants) can also be deduce from the proofs of the known global results. For Godunov flux, it can be deduced either from  \cite{GL}, or from  \cite{BO} for an optimal constant with a nice proof (which simply uses the fact that Godunov scheme is equivalent to solve exactly the Riemann problem (i.e. solve the exact PDE), and then average the solution). See also \cite{T} for the case of Lax-Friedrichs schemes.
\end{remark}

\begin{proof}[\bf Proof of Lemma \ref{lem:discreteOleinik}]
\noindent {\bf Step 1: Initial condition}\\
We first check that (\ref{eq:OleinikEstimate}) holds true for $n=0$.
We have
{$$w_j^n=\frac{q_{j+1/2}^n-q_{j-1/2}^n}{\Delta x} \quad \mbox{with}\quad |q_{j \pm 1/2}^n|\le M$$}
Hence
$$\left(\Delta t \frac \delta 8\right)  \sup_{j\in [J_1,J_2]} w^0_j \le \left(\Delta t \frac \delta 8\right) \frac{2M}{\Delta x} =\frac{\gamma}{2} \le \frac12 \le 1$$
and (\ref{eq:OleinikEstimate}) is satisfied for $n=0$.\\
\noindent {\bf Step 2: The supersolution}\\
Recall that, by Lemma  \ref{lem:BoundDiscreteGradient}, we have, with $\hat w^n_j:=\max(0,w^n_{j-1},w^n_{j},w^n_{j+1})$, for $j\ge 2$
\begin{equation}\label{eq::37}
\frac{\max(0,w_j^{n+1})-\hat w_j^n}{\Delta t}   \le -\frac \delta 8|\hat w_j^{n}|^2
\end{equation}
Notice that
$$\frac{1}{m+1}-\frac{1}{m} \ge -\left|\frac{1}{m}\right|^2\quad \mbox{for}\quad m\ge 1$$
and then we see immediately that
$$h^n:=\frac{1}{\left(\Delta t \frac \delta 8\right)}\frac{1}{(n+1)}$$
is a supersolution of the equation with equality in (\ref{eq::37}), whose $w^n$ is itself a subsolution.
{Moreover $h^n$ satisfies the equality in the inequality (\ref{eq:OleinikEstimate}) for $n=0$.}\\
\noindent {\bf Step 3: Time evolution and comparison}\\
Now assume that \eqref{eq:OleinikEstimate} is true at  step $n\ge 0$ and let us show it is also true at  step $n+1$.

We then assume that
$$\sup_{j\in [J_1+n,J_2-n]} w^n_j \le h^n$$
i.e.
$$\sup_{j\in [J_1+n+1,J_2-(n+1)]} \hat w^n_j \le h^n.$$
Then (\ref{eq::37}) implies that
$$\sup_{j\in [J_1+(n+1),J_2-(n+1)]} \max(0,w^{n+1}_j) \le \sup_{j\in [J_1+n+1,J_2-(n+1)]} \Phi(\hat w_j^{n})\quad \mbox{with}\quad \Phi(w):=w- \Delta t \frac \delta 8 |w|^2.$$
Because $\Phi$ is nondecreasing on $\left[0,\left( \Delta t \frac \delta 4\right)^{-1}\right]$, and
$$0\le \hat w_j^{n} \le \frac{2 M}{\Delta x} \le \left( \Delta t \frac \delta 4\right)^{-1}{=\frac12 h^0}\quad \mbox{because}\quad \gamma\le 1,$$
we deduce, using that $h^n$ is a supersolution, that
{
$$\Phi(\hat w_j^{n})\le \left\{\begin{array}{lll}
\Phi(h^n) \le h^{n+1}&\quad \mbox{if}\quad n\ge 1,&\quad \mbox{because}\quad h^n\le \left( \Delta t \frac \delta 4\right)^{-1}\\
\Phi(\frac12 h^0)=\frac14 h^0\le \frac12 h^0 = h^1 &\quad \mbox{if}\quad n=0 &\quad \mbox{because}\quad \hat w^0_j \le \frac12 h^0
\end{array}\right.$$}
for all $j\in[J_1+(n+1),J_2-(n+1)]$. This implies that
$$\sup_{j\in [J_1+(n+1),J_2-(n+1)]} \max(0,w^{n+1}_j) \le h^{n+1}.$$
This ends the proof fo the lemma.
\end{proof}
\begin{lemma}\label{lem::38}{\bf (Total variation estimates)}\\
Assume that for $J_2\ge J_1\ge 2$ {and for $B\ge 0$}
$$\left\{\begin{array}{ll}
\displaystyle \frac{q^n_{j+1/2}-q_{j-1/2}^n}{\Delta x} \le B &\quad \mbox{for all}\quad j\in [J_1,J_2-1]\\
\\
|q^n_{j-\frac12}|\le M & \quad \mbox{for all}\quad j\in [J_1,J_2].
\end{array}\right.$$
Then we have
$$\sum_{j\in [J_1,J_2-1]} |q^n_{j+1/2}-q_{j-1/2}^n| \le 2M+ 2B (J_2-J_1) \Delta x$$
and
{$$\sum_{j\in [J_1+1,J_2-1]} |q^{n+1}_{j-1/2}-q_{j-1/2}^n| \le 2L_f \frac{\Delta t}{\Delta x} \sum_{j\in [J_1,J_2-1]} |q^n_{j+1/2}-q_{j-1/2}^n|\le 2L_f \frac{\Delta t}{\Delta x}  \cdot (2M+ 2B (J_2-J_1) \Delta x).$$
 {where} $L_f$ is the Lipschitz constant of $f$.}
\end{lemma}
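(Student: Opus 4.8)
The plan is to prove the two estimates separately: the first is a purely algebraic consequence of the one-sided slope bound combined with the $L^\infty$ bound, while the second follows from the scheme \eqref{eq::t2} together with the Lipschitz regularity of the Godunov flux.

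First I would establish the spatial bound. Writing $d_j := q^n_{j+1/2}-q^n_{j-1/2}$, the first hypothesis reads $d_j \le B\Delta x$ for $j\in[J_1,J_2-1]$, hence $d_j^+ := \max(d_j,0) \le B\Delta x$. Using the elementary identity $|d_j| = 2d_j^+ - d_j$ and summing over $j\in[J_1,J_2-1]$, I get $\sum_{j} |d_j| = 2\sum_j d_j^+ - \sum_j d_j$. The sum $\sum_{j\in[J_1,J_2-1]} d_j^+$ is bounded by $B\Delta x\,(J_2-J_1)$, since there are exactly $J_2-J_1$ indices; meanwhile $\sum_{j\in[J_1,J_2-1]} d_j$ telescopes to $q^n_{J_2-1/2} - q^n_{J_1-1/2}$, so that $-\sum_j d_j = q^n_{J_1-1/2} - q^n_{J_2-1/2} \le 2M$ by the $L^\infty$ bound applied at the endpoint indices $j=J_1$ and $j=J_2$. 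Combining these gives $\sum_j |d_j| \le 2B\Delta x\,(J_2-J_1) + 2M$, which is the first claimed estimate.

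For the temporal bound I would rewrite the scheme \eqref{eq::t2} with $j$ replaced by $j-1$ (legitimate since $j-1\ge 1$ whenever $j\ge J_1+1\ge 3$), obtaining $q^{n+1}_{j-1/2} - q^n_{j-1/2} = -\frac{\Delta t}{\Delta x}\big(g^f(q^n_{j-1/2},q^n_{j+1/2}) - g^f(q^n_{j-3/2},q^n_{j-1/2})\big)$. Since the Godunov flux $g^f$ is Lipschitz with constant $L_f$ in each argument (the bounds $\partial_{p_1}g^f\in[0,L_f]$ and $\partial_{p_2}g^f\in[-L_f,0]$ used in Lemma \ref{lem:PBorne}), I bound $|q^{n+1}_{j-1/2} - q^n_{j-1/2}| \le L_f\frac{\Delta t}{\Delta x}\big(|q^n_{j-1/2}-q^n_{j-3/2}| + |q^n_{j+1/2}-q^n_{j-1/2}|\big)$. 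Summing over $j\in[J_1+1,J_2-1]$ and reindexing the first term (shift $j\mapsto j-1$), both resulting sums are dominated by the full spatial variation $S := \sum_{j\in[J_1,J_2-1]} |q^n_{j+1/2}-q^n_{j-1/2}|$, so the total is at most $2L_f\frac{\Delta t}{\Delta x}\,S$. Substituting the first estimate for $S$ yields the final inequality.

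The computations are elementary, so I do not expect a genuine obstacle. The only points requiring care are the bookkeeping of the index ranges — verifying that every half-integer index appearing in the telescoping sum and in the flux evaluations lies within the window $[J_1-1/2,\,J_2-1/2]$ on which the hypotheses are posed — and checking that the two reindexed sums in the temporal estimate each remain contained in the spatial-variation sum $S$, which is precisely what produces the clean constant $2$. This lemma is a standard $BV$-stability step whose only role is to feed the compactness argument of Proposition \ref{pro:compactnessbranch}.
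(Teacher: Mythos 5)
Your proof is correct and follows exactly the route the paper intends but does not write out: the paper's ``proof'' of Lemma \ref{lem::38} consists only of the remark that the result follows from a picture with worse cases (and from the scheme for the last bound), and your argument --- the identity $|d_j|=2d_j^+-d_j$ combined with the one-sided bound $d_j^+\le B\Delta x$ and the telescoping sum controlled by $2M$, followed by the Lipschitz bound on $g^f$ applied to the shifted scheme --- is precisely the rigorous version of that sketch. The index bookkeeping ($J_1\ge 2$ guaranteeing the scheme is available at $j-1\ge 1$, and both reindexed sums embedding into $\sum_{j\in[J_1,J_2-1]}|q^n_{j+1/2}-q^n_{j-1/2}|$) is handled correctly, so nothing is missing.
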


\begin{proof}The result easily follows from a picture with worse cases (and from the scheme for the last bound). We skip the details.
This ends the proof of the lemma.
\end{proof}

We are now in a position to prove Proposition \ref{pro:compactnessbranch}.

\begin{proof}[Proof of Proposition \ref{pro:compactnessbranch}]
We simply apply the bounds of Lemma \ref{lem::38}, which shows that for all {$\theta> 0$ and $0<R_1<R_2$
$$|q_\Delta|_{BV(\Omega_{\theta,R_1,R_2})} \le C_\theta,\quad |q_\Delta|_{L^\infty(0,+\infty)\times (0,+\infty)} \le M$$
for the triangle
$$\Omega_{\theta,R_1,R_2}:= \left\{(t,x)\in (0,+\infty)^2\; {\rm s.t.} \; t\in \left(\theta, \frac{R_2-R_1}2+\theta\right),\; x\in (R_1+t-\theta, R_2-(t-\theta))\right\}.$$
Recovering $(0,+\infty)\times (0,+\infty)$ by triangles possibly arbitrary small, we deduce the result from a standard diagonal extraction argument.}
This ends the proof of the lemma.
\end{proof}

{
\subsection{Hamilton-Jacobi germs are not $L^1$-dissipative for $N\ge 3$ branches}\label{s6c}

In this subsection, for convenience of an (undeveloped) traffic interpretation/motivation, we prefer to work with concave fluxes instead of convex fluxes (which is indeed equivalent by a simple change of sign).\\

\noindent {\bf Notation}.\\
Let $I$ and $J$ be two non-empty finite sets (of indices) with $I\cap J=\emptyset$. For $\alpha\in I\cup J$, we consider real numbers $a_\alpha< c_\alpha$,  and non constant concave functions $f^\alpha:[a_\alpha,c_\alpha]\to [0,+\infty)$ with $f^\alpha(a_\alpha)=0=f^\alpha(c_\alpha)$.
We consider $\displaystyle A_0:=\min_{\alpha\in I\cup J} \lambda^\alpha_{max}$ where $\displaystyle \lambda^\alpha_{max}:= \max_{Q_\alpha} f^\alpha>0$ and $Q_\alpha:=[a_\alpha,c_\alpha]$.
We set
\begin{equation}\label{defqpmalpha}
f^{\alpha,+}(q)=\sup_{[a_\alpha,q]} f^\alpha,\quad f^{\alpha,-}(q)=\sup_{[q,c_\alpha]} f^\alpha,\quad \mbox{for}\quad q\in Q_\alpha$$
and, for all $\lambda\in [0,\lambda^\alpha_{max}]$,
$$q_\pm^\alpha(\lambda):=q\quad \mbox{where $q\in Q_\alpha$ is defined by}\quad f^\alpha(q)=\lambda=f^{\alpha,\pm}(q)
\end{equation}
We consider weights
\begin{equation}\label{eq::g6}
\theta_\alpha\in (0,1]\quad \mbox{for all}\quad  \alpha\in I\cup J\quad \mbox{such that}\quad  1=\sum_{i\in I} \theta_i=\sum_{j\in J} \theta_j.
\end{equation}
Notice that for $\alpha\in I\cup J$, the equality $\theta_\alpha=1$ implies that $\mbox{Card}(I)=1$ (if $\alpha\in I$) or $\mbox{Card}(J)=1$ (if $\alpha\in J$).\\

\noindent {\bf HJ problem}\\
We consider the following Hamilton-Jacobi problem on a junction with incoming branches indexed by $I$ and outgoing branches indexed by $J$
\begin{equation}\label{eq::g1}
\left\{\begin{array}{rlll}
u^i_t+ \theta_i^{-1}f^i(\theta_iu^i_x)&=0& \quad x<0&\quad  i\in I\\
u^j_t+\theta_j^{-1}f^j(\theta_j u^j_x)&=0& \quad x>0&\quad j\in J\\
u^i=u^j&=:u&\quad x=0&\quad i\in I,\quad j\in J\\
\displaystyle u_t+\min\left\{A,\min_{i\in I} \theta_i^{-1}f^{i,+}(\theta_i u^i_x),\min_{j\in J}\theta_j^{-1}f^{j,-}(\theta_ju^j_x)\right\}&=0& \quad x=0&\\
\end{array}\right.
\end{equation}
where $A\in [0,A_0]$ is the flux limiter.
We define
$\rho^\alpha:=\theta_\alpha u^\alpha_x$ for $\alpha \in I\cup J$,
which satisfies (at least formally)
\begin{equation}\label{eq::g2}
\left\{\begin{array}{rlll}
\rho^i_t+ f^i(\rho^i)_x&=0& \quad x<0&\quad  i\in I\\
\rho^j_t+ f^j(\rho^j)_x&=0& \quad x>0&\quad j\in J\\
\rho=((\rho^i)_{i\in I},(\rho^j)_{j\in J})&\in \mathcal G_A^{HJ}&\quad x=0&\quad  \mbox{for a.e. time $t$}\\
\end{array}\right.
\end{equation}
with the HJ germ defined by the set
$$\mathcal G_A^{HJ}:=\left\{\begin{array}{l}
\displaystyle p=(p_\alpha)_{\alpha\in I\cup J}\in \prod_{i\in I\cup J} Q_\alpha,\quad \mbox{such that there exists $\lambda\in \R$ with}\\
\theta_\alpha^{-1} f^\alpha(p_\alpha)=\lambda=\min\left\{A, \quad \displaystyle \min_{i\in I} \ \theta_i^{-1} f^{i,+}(p_i),\quad  \displaystyle \min_{j\in J} \ \theta_j^{-1} f^{j,-}(p_j)\right\}\quad \mbox{for all}\quad \alpha\in I\cup J
\end{array}\right\}$$
By \eqref{eq::g6} we recover the Rankine-Hugoniot relation
$$\sum_{i\in I} f^i(p_i)=\sum_{j\in J} f^j(p_j)\quad \mbox{for all}\quad p\in \mathcal G_A^{HJ}.$$

\begin{lemma}\label{lem::g5}{\bf (Lack of dissipation for Hamilton-Jacobi germs with $3$ branches or more)}\\
Set $n:=\mbox{Card}(I)$ and $m:=\mbox{Card}(J)$ with $n,m\ge 1$. Under the previous assumptions, we have:\\
\noindent {\bf i)}\ The set $\mathcal G_A^{HJ}$ is   $L^1$-dissipative  if $A\in [0,A_0]$ and $n=m=1$, or if $A=0$ and $n,m\ge 1$.\\
\noindent {\bf ii)}\ For $A\in (0,A_0]$, the set $\mathcal G_A^{HJ}$ is not
$L^1$-dissipative  if $n+m\ge 3$.
\end{lemma}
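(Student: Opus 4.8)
The plan is to reduce both statements to one explicit formula for the entropy dissipation between two germ elements. As in the two-branch convention of Proposition~\ref{pro:propertiesGAF0} (incoming branches $I$, living in $x<0$, entering with a plus sign, outgoing branches $J$, living in $x>0$, with a minus sign), $L^1$-dissipativity of $\mathcal G_A^{HJ}$ means that
$$\mathcal D(p,\hat p):=\sum_{i\in I} q^i(p_i,\hat p_i)-\sum_{j\in J} q^j(p_j,\hat p_j)\ge 0,\qquad q^\alpha(a,b):=\sign(a-b)(f^\alpha(a)-f^\alpha(b)),$$
for all $p,\hat p\in\mathcal G_A^{HJ}$. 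First I would unwind the definition of $\mathcal G_A^{HJ}$: an element $p$ is prescribed by a single flux level $\lambda\in[0,A]$ with $f^\alpha(p_\alpha)=\theta_\alpha\lambda$, together with a choice, on each branch, of the ascending point $p_\alpha=q^\alpha_+(\theta_\alpha\lambda)\le b_\alpha$ or the descending point $p_\alpha=q^\alpha_-(\theta_\alpha\lambda)\ge b_\alpha$, where $b_\alpha$ is the maximizer of $f^\alpha$. Evaluating $\theta_i^{-1}f^{i,+}(p_i)$ and $\theta_j^{-1}f^{j,-}(p_j)$ shows that each equals $\lambda$ exactly when branch $i$ is ascending (resp. branch $j$ is descending) and is strictly larger otherwise; hence the requirement that $\lambda$ be the minimum forces the constraint that, \emph{when $\lambda<A$}, at least one incoming branch is ascending or one outgoing branch is descending (for $\lambda=A$ there is no constraint).

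The key computation is the following. Encode the type of the lower of the two elements by $\epsilon_\alpha=+1$ if $\hat p_\alpha$ is ascending and $\epsilon_\alpha=-1$ if it is descending. Taking $\lambda>\hat\lambda$ for the two flux levels (legitimate since $q^\alpha$ is symmetric), every entropy flux equals $q^\alpha(p_\alpha,\hat p_\alpha)=\sign(p_\alpha-\hat p_\alpha)\,\theta_\alpha(\lambda-\hat\lambda)$, and a short check on the four type combinations shows $\sign(p_\alpha-\hat p_\alpha)=\epsilon_\alpha$, depending only on $\hat p_\alpha$. Summing gives
$$\mathcal D(p,\hat p)=(\lambda-\hat\lambda)\Big(\sum_{i\in I}\theta_i\epsilon_i-\sum_{j\in J}\theta_j\epsilon_j\Big).$$
In particular two elements at the same level give $\mathcal D=0$, so only pairs with $\lambda\neq\hat\lambda$ can matter.

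With this formula both parts are short. For (i): when $A=0$ the only admissible level is $\lambda=0$, so all elements share level $0$ and $\mathcal D\equiv 0$; when $n=m=1$, writing $I=\{i\},\,J=\{j\}$ with $\theta_i=\theta_j=1$, one has $\mathcal D=(\lambda-\hat\lambda)(\epsilon_i-\epsilon_j)$, and since $\hat\lambda<\lambda\le A$ forces $\hat\lambda<A$, the germ constraint on the lower element excludes exactly the pattern $\epsilon_i=-1,\epsilon_j=+1$, leaving $\epsilon_i\ge\epsilon_j$ and $\mathcal D\ge 0$. For (ii), fix $A\in(0,A_0]$ and assume $n+m\ge 3$, so $n\ge 2$ or $m\ge 2$. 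Suppose $n\ge 2$ (the case $m\ge 2$ is symmetric, flipping one outgoing branch to descending). Take $\lambda=A$ for $p$ (an unconstrained germ element) and any $\hat\lambda\in(0,A)$ for $\hat p$, chosen with all outgoing branches ascending ($\epsilon_j=+1$), one fixed incoming branch $i_0$ ascending ($\epsilon_{i_0}=+1$, which meets the $\lambda<A$ constraint) and all other incoming branches descending. Then
$$\sum_{i\in I}\theta_i\epsilon_i-\sum_{j\in J}\theta_j\epsilon_j=(2\theta_{i_0}-1)-1=2(\theta_{i_0}-1)<0,$$
since $n\ge 2$ gives $\theta_{i_0}<1$; hence $\mathcal D(p,\hat p)<0$ and $\mathcal G_A^{HJ}$ is not $L^1$-dissipative.

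The main obstacle is not any single estimate but the bookkeeping: pinning down the multi-branch dissipation functional with its orientation signs, verifying the identity $\sign(p_\alpha-\hat p_\alpha)=\epsilon_\alpha$ that collapses everything to a linear inequality in the weights $\theta_\alpha$, and checking that the constructed $\hat p$ genuinely lies in $\mathcal G_A^{HJ}$. One must also dispose of the degenerate case $\theta_\alpha\lambda=\lambda^\alpha_{max}$, where the two branch points merge at $b_\alpha$ and the type is ambiguous; this can only occur at the top level $\lambda=A=A_0$ with $\theta_\alpha=1$, so choosing $\hat\lambda<A$ keeps the lower element non-degenerate, while the type of $p$ never enters the sign computation.
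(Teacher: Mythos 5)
Your proposal is correct. For part (ii) --- the substance of the lemma --- your counterexample coincides with the paper's: a level-$A$ element is tested against a lower-level element whose only ``transmitting'' branch is a single ascending incoming branch $i_0$ (or, symmetrically, a single descending outgoing branch), and dissipativity collapses to $\theta_{i_0}\ge 1$, incompatible with the normalization $\sum_{i\in I}\theta_i=\sum_{j\in J}\theta_j=1$ once $n+m\ge 3$. The difference is organizational, and it buys something. The paper computes the signs $\sign(p'_\alpha-p_\alpha)$ ad hoc for one explicit pair of germ elements; you instead classify all germ elements by a level $\lambda\in[0,A]$ and a type vector $(\epsilon_\alpha)_\alpha$, observe that the sign between two elements is read off the type of the lower-level element alone, and obtain the closed identity $\mathcal D(p,\hat p)=(\lambda-\hat\lambda)\bigl(\sum_{i\in I}\theta_i\epsilon_i-\sum_{j\in J}\theta_j\epsilon_j\bigr)$. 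This yields (a) a self-contained proof of part (i) in the case $n=m=1$ (the germ constraint on the lower element excludes the pattern $(\epsilon_i,\epsilon_j)=(-1,+1)$), where the paper instead invokes Proposition~\ref{pro:propertiesGAF0}, which is stated in the convex two-branch normalization; (b) a uniform treatment of the degenerate case $\theta_\alpha\lambda=\lambda^\alpha_{max}$, which the paper passes over in silence; and (c) as a by-product, a confirmation that in the paper's definition of $p_j$ for $j\in J\setminus\{\alpha_0\}$ the point must be the ascending one $q^j_+(\theta_j\lambda)$ --- the displayed $q^i_-(\theta_i\lambda)$ is a typo, since with descending points there the asserted sign $\sign(p'_j-p_j)=+1$ fails and the final inequality would reduce to $\theta_{\alpha_0}\ge 0$, producing no contradiction.
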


\begin{proof}[Proof of Lemma \ref{lem::g5}]
Recall that the germ $\mathcal G_A^{HJ}$ is $L^1$-dissipative (on the box $\displaystyle Q:=\prod_{\alpha\in I\cup J} Q_\alpha$) if and only if the entropy flux satisfies $\mbox{IN}\ge \mbox{OUT}$, i.e. for all $p',p\in \mathcal G_A^{HJ}$, we have
\begin{equation}\label{eq::g3}
\sum_{i\in I} \mbox{sign}(p_i'-p_i)\cdot \left\{f^i(p_i')-f^i(p_i)\right\} \ge \sum_{j\in J} \mbox{sign}(p_j'-p_j)\cdot \left\{f^j(p_j')-f^j(p_j)\right\}
\end{equation}
The case $A=0$ is trivial, and we now assume that $A\in (0,A_0]$.
We choose
$$p_i':=q^i_+(\theta_i A),\quad p_j':=q^j_-(\theta_j A),\quad i\in I,\quad j\in J,$$
where the map $q_\pm^\alpha(\cdot)$ is defined in \eqref{defqpmalpha}.
Now we choose $\alpha_0\in I\cup J$ and for some
$\lambda\in (0,A)$, we set
$$
p_i:=\left\{\begin{array}{ll}
q^i_+(\theta_i \lambda) & \text{if}\;  i=\alpha_0\in I,\\
q^i_-(\theta_i \lambda) & \text{if}\;  i\in I\backslash \left\{\alpha_0\right\},
\end{array}\right.
\qquad \text{and}\qquad
p_j:=\left\{\begin{array}{ll} q^j_-(\theta_j \lambda)& \text{if}\;  j=\alpha_0\in J\\
q^i_-(\theta_i \lambda) & \text{if}\;   j\in J\backslash \left\{\alpha_0\right\}
\end{array}\right.
$$
Then we have
$$
\mbox{sign}(p_i'-p_i)=\left\{\begin{array}{ll} +1 & \text{if}\;  i=\alpha_0\in I,\\
-1 & \text{if}\;  i\in I\backslash \left\{\alpha_0\right\},
\end{array}\right.
\qquad \text{and}\qquad
\mbox{sign}(p_j'-p_j)=\left\{\begin{array}{ll} -1& \text{if}\;  j=\alpha_0\in J,\\
+1 & \text{if}\;  j\in J\backslash \left\{\alpha_0\right\}
\end{array}\right.
$$
and
$$\left\{f^\alpha(p_\alpha')-f^\alpha(p_\alpha)\right\} =\theta_\alpha (A-\lambda)>0\quad \mbox{for all}\quad \alpha\in I\cup J.$$
Dividing  (\ref{eq::g3}) by $(A-\lambda)>0$, this leads to:
$$\left\{\begin{array}{lll}
\left\{-1+2\theta_{\alpha_0}\right\}&\ge \left\{+1\right\}&\quad \mbox{if}\quad \alpha_0\in I\\
\left\{-1\right\}&\ge \left\{+1-2\theta_{\alpha_0}\right\}&\quad \mbox{if}\quad \alpha_0\in J\\
\end{array}\right.$$
which forces $\theta_{\alpha_0}\ge 1$. This contradicts (\ref{eq::g6}) if $\mbox{Card}(I)\ge 2$ or $\mbox{Card}(J)\ge 2$.
The fact that $\mathcal G^{HJ}_A$ is $L^1$-dissipative for $\mbox{Card}(I)=1=\mbox{Card}(J)$ is proved in Proposition \ref{pro:propertiesGAF0}. This ends the proof of the lemma.
\end{proof}
}

\paragraph{\textbf{Acknowledgement.}}
This research was partially funded by l'Agence Nationale de la Recherche (ANR), project ANR-22-CE40-0010 COSS.
For the purpose of open access, the authors have applied a CC-BY public copyright licence to any Author Accepted Manuscript (AAM) version arising from this submission. The last author would like to thank J. Dolbeault, C. Imbert and T. Leli\`{e}vre for providing him good working conditions.

\bibliographystyle{siam}

\begin{thebibliography}{}

\end{thebibliography}


\begin{thebibliography}{10}

\bibitem{ACCT}
{\sc Y.~Achdou, F.~Camilli, A.~Cutr\`{i}, and N.~Tchou}, {\em Hamilton-{J}acobi
  equations constrained on networks}, NoDEA Nonlinear Differential Equations
  Appl., 20 (2013), pp.~413--445.

\bibitem{AdimurthiGowda2005}
{\sc Adimurthi, S.~Mishra, and G.~D.~V. Gowda}, {\em Optimal entropy solutions
  for conservation laws with discontinuous flux-functions}, J. Hyperbolic
  Differ. Equ., 2 (2005), pp.~783--837.

\bibitem{AGS2010}
{\sc B.~Andreianov, P.~Goatin, and N.~Seguin}, {\em Finite volume schemes for
  locally constrained conservation laws}, Numer. Math. (Heidelb.), 115 (2010),
  pp.~609--645.

\bibitem{andreianov}
{\sc B.~Andreianov, K.~H. Karlsen, and N.~H. Risebro}, {\em A theory of
  {$L^1$}-dissipative solvers for scalar conservation laws with discontinuous
  flux}, Arch. Ration. Mech. Anal., 201 (2011), pp.~27--86.

\bibitem{AbrahamPreprint}
{\sc B.~Andreianov and A.~Sylla}, {\em Finite volume approximation and
  well-posedness of conservation laws with moving interfaces under abstract
  coupling conditions}.
\newblock  {Nonlinear Differ. Equ. Appl. 30, 53 (2023)}.


\bibitem{CancesAndreianov2015}
{\sc B.~P. Andreianov and C.~Canc{\`e}s}, {\em On interface transmission
  conditions for conservation laws with discontinuous flux of general shape},
  Journal of Hyperbolic Differential Equations, 12 (2015), pp.~343--384.

\bibitem{BCD2016}
{\sc B.~P. Andreianov, G.~M. Coclite, and C.~Donadello}, {\em Well-posedness
  for vanishing viscosity solutions of scalar conservation laws on a network},
  Discrete and Continuous Dynamical Systems, 37 (2017), pp.~5913--5942.

\bibitem{MonotoneGraphAndreianov}
{\sc {Andreianov, Boris}}, {\em New approaches to describing admissibility of
  solutions of scalar conservation laws with discontinuous flux}, ESAIM:
  Proc., 50 (2015), pp.~40--65.

\bibitem{BLN}
{\sc C.~Bardos, A.~Y. le~Roux, and J.-C. N\'{e}d\'{e}lec}, {\em First order
  quasilinear equations with boundary conditions}, Comm. Partial Differential
  Equations, 4 (1979), pp.~1017--1034.

\bibitem{BCbook}
{\sc G.~Barles and E.~Chasseigne}, {\em An illustrated guide of the modern
  approaches of hamilton-jacobi equations and control problems with
  discontinuities}. {https://arxiv.org/abs/1812.09197}, 2023.

\bibitem{BO}
{\sc Y.~Brenier and S.~Osher}, {\em The discrete one-sided {L}ipschitz
  condition for convex scalar conservation laws}, SIAM J. Numer. Anal., 25
  (1988), pp.~8--23.


\bibitem{Caselles92}
{\sc V.~Caselles}, {\em Scalar conservation laws and {H}amilton-{J}acobi
  equations in one-space variable}, Nonlinear Anal., 18 (1992), pp.~461--469.

\bibitem{ColomboGoatin2007}
{\sc R.~M. Colombo and P.~Goatin}, {\em A well posed conservation law with a
  variable unilateral constraint}, J. Differ. Equ., 234 (2007), pp.~654--675.

  \bibitem{CP20}
\textsc{R.~M. Colombo, and V.~Perrollaz},
{\it Initial data identification in conservation laws and Hamilton-Jacobi equations}.
Journal de Math\'ematiques Pures et Appliqu\'ees, 138 (2020), 1-27.


\bibitem{colombo-Perrollaz-Sylla}
{\sc R.~M. Colombo, V.~Perrollaz, and A.~Sylla}, {\em {Initial Data
  Identification in Space Dependent Conservation Laws and Hamilton-Jacobi
  Equations}}.
\newblock {https://hal.science/hal-04062783/,} 2023.

\bibitem{CR2015}
{\sc R.~M. Colombo and E. Rossi}, {\em {Rigorous estimates on balance laws in bounded domains}}.
Acta Math. Sci. Ser. B (Engl. Ed.), 35 (4), (2015), pp.906--944.


\bibitem{coste}
{\sc M.~Coste}, {\em An introduction to o-minimal geometry}.
\newblock RAAG Notes, 81 pages, Institut
de Recherche Mathematiques de Rennes, November 1999. Lectures notes,
  https://perso.univ-rennes1.fr/michel.coste/polyens/OMIN.pdf.

\bibitem{coste2}
{\sc M.~Coste}, {\em An Introduction to Semialgebraic Geometry.}
\newblock RAAG Notes, 78 pages, Institut de Recherche Mathï¿½matiques de Rennes, October 2002.

\bibitem{FMR22}
{\sc U.~S. Fjordholm, M.~Musch, and N.~H. Risebro}, {\em Well-posedness and
  convergence of a finite volume method for conservation laws on networks},
  SIAM J. Numer. Anal., 60 (2022), pp.~606--630.

\bibitem{GL}
{\sc J.~B. Goodman and R.~J. LeVeque}, {\em A geometric approach to high
  resolution {TVD} schemes}, SIAM J. Numer. Anal., 25 (1988), pp.~268--284.

\bibitem{guerand}
{\sc J.~Guerand and M.~Koumaiha}, {\em Error estimates for a finite difference
  scheme associated with {H}amilton-{J}acobi equations on a junction}, Numer.
  Math., 142 (2019), pp.~525--575.

\bibitem{KR02}
\textsc{K.H. Karlsen and  N.H. Risebro},
{\it A note on Front tracking and the Equivalence between Viscosity Solutions of Hamilton-Jacobi
Equations And Entropy Solutions of scalar Conservation Laws.}
Nonlin. Anal. TMA 50 (4) (2002), 455-469.


\bibitem{imbert-monneau}
{\sc C.~Imbert and R.~Monneau}, {\em Flux-limited solutions for quasi-convex
  {H}amilton-{J}acobi equations on networks}, Ann. Sci. \'{E}c. Norm.
  Sup\'{e}r. (4), 50 (2017), pp.~357--448.

\bibitem{IMZ}
{\sc C.~Imbert, R.~Monneau, and H.~Zidani}, {\em A {H}amilton-{J}acobi approach
  to junction problems and application to traffic flows}, ESAIM Control Optim.
  Calc. Var., 19 (2013), pp.~129--166.

\bibitem{LS1}
{\sc P.-L. Lions and P.~Souganidis}, {\em Viscosity solutions for junctions:
  well posedness and stability}, Atti Accad. Naz. Lincei Rend. Lincei Mat.
  Appl., 27 (2016), pp.~535--545.

\bibitem{LS2}
\leavevmode\vrule height 2pt depth -1.6pt width 23pt, {\em Well-posedness for
  multi-dimensional junction problems with {K}irchoff-type conditions}, Atti
  Accad. Naz. Lincei Rend. Lincei Mat. Appl., 28 (2017), pp.~807--816.

\bibitem{MISHRASurvey}
{\sc S.~Mishra}, {\em Chapter 18 - numerical methods for conservation laws with
  discontinuous coefficients}, in Handbook of Numerical Methods for Hyperbolic
  Problems, R.~Abgrall and C.-W. Shu, eds., vol.~18 of Handbook of Numerical
  Analysis, Elsevier, 2017, pp.~479--506.

  \bibitem{Monneau2023}
{\sc R.~Monneau}, {\rm Strictly convex Hamilton-Jacobi equations: strong trace of the gradient}. Preprint, https://hal.science/hal-04254243.

\bibitem{MFR22}
{\sc M.~Musch, U.~S. Fjordholm, and N.~H. Risebro}, {\em Well-posedness theory
  for nonlinear scalar conservation laws on networks}, Netw. Heterog. Media, 17
  (2022), pp.~101--128.

\bibitem{panov-trace}
{\sc E.~Y. Panov}, {\em Existence of strong traces for quasi-solutions of
  multidimensional conservation laws}, J. Hyperbolic Differ. Equ., 4 (2007),
  pp.~729--770.

\bibitem{Abraham2022}
{\sc A.~Sylla}, {\em A lwr model with constraints at moving interfaces}, ESAIM:
  Mathematical Modelling and Numerical Analysis, 56 (2022).

\bibitem{T}
{\sc E.~Tadmor}, {\em The large-time behavior of the scalar, genuinely
  nonlinear {L}ax-{F}riedrichs scheme}, Math. Comp., 43 (1984), pp.~353--368.

\bibitem{Trelat1}
{\sc E.~Trélat}, {\em Global subanalytic solutions of Hamilton-Jacobi type equations}, Ann. Inst. H. Poincaré C Anal. Non Linéaire 23 (2006), no. 3, 363--387.

\bibitem{Trelat2}
{\sc E.~Trélat}, {\em Solutions sous-analytiques globales de certaines équations d'{H}amilton-{J}acobi}, Comptes Rendus Math. 337, 10 (2003), 653--656.




\end{thebibliography}

\end{document}